\newcommand{\de}{\mathrm{d}}
\newcommand{\virg}[1]{``#1''}
\newcommand{\N}{\mathbb{N}}
\newcommand{\Z}{\mathbb{Z}}
\newcommand{\C}{\mathbb{C}}
\newcommand{\bianco}{\textcolor{white}{.}}
\newcommand{\be}{\begin{equation}}
\newcommand{\bey}{\begin{eqnarray}}
\newcommand{\ee}{\end{equation}}
\newcommand{\eey}{\end{eqnarray}}
\newcommand{\ba}{\begin{array}}
\newcommand{\ea}{\end{array}}
\newcommand{\bm}[1]{\mbox{\boldmath{$#1$}}}
\theoremstyle{plain}                    
\newtheorem{theorem}{Theorem}[section]
\newtheorem{lem}[theorem]{Lemma}            
\newtheorem{prop}[theorem]{Proposition}
\newtheorem{cor}[theorem]{Corollary}
\theoremstyle{definition}
\newtheorem{defin}[theorem]{Definition}
\newtheorem{remark}[theorem]{Remark}
\newtheorem*{maintheorem}{Main Theorem}
\newtheorem*{maincor}{Corollary}
\begin{document}
\title{Ergodic Properties of Square-Free Numbers}
\author{F. Cellarosi$^*$, Ya.G. Sinai$^{\S,\dag}$}
\maketitle

\let\oldthefootnote\thefootnote
\renewcommand{\thefootnote}{\fnsymbol{footnote}}
\footnotetext[1]{Institute for Advanced Study. Princeton, NJ, U.S.A.}
\footnotetext[2]{Mathematics Department, Princeton University. Princeton, NJ, U.S.A.}
\footnotetext[3]{Landau Institute of Theoretical Physics, Russian Academy of Sciences. Moscow, Russia.}
\let\thefootnote\oldthefootnote

\begin{abstract}We construct a natural invariant measure concentrated on the set of square-free numbers, and invariant under the shift. We prove that the corresponding dynamical system is isomorphic to a translation on a compact, Abelian group. This implies that this system is not weakly mixing and has zero measure-theoretical entropy.
\end{abstract}

\begin{keywords} square-free numbers, correlation functions, dynamical systems with pure point spectrum, ergodicity. {\bf MSC}: 37A35, 37A45, 28D99.
\end{keywords}


%

\section*{Introduction and Notations}
Let $\mathcal P$ be the set of prime numbers. By $p$ (with or without indices) we will always denote an element of $\mathcal P$. A positive integer $n$ is \emph{square-free} if $p^2\nmid n$ for every $p$. Denote the set of all square-free numbers by $\mathcal Q$ (for \emph{quadratfrei}). The indicator of the set $\mathcal Q$ is the function $n\mapsto\mu^2(n)$, where $\mu$ is the M\"{o}bius function: 
\be\mu(n)=\begin{cases}1,&\mbox{if $n=1$};\\ 0,&\mbox{if $n$ is not square-free;}\\(-1)^k,&\mbox{if $n$ is the product of $k$ distinct primes.}\end{cases}\nonumber\ee
The functions $\mu$ and $\mu^2$ are of great importance in Number Theory because of their connection with the Riemann zeta function. For example,
\be
\sum_{n=1}^\infty\frac{\mu(n)}{n^s}=\frac{1}{\zeta(s)},\hspace{.5cm}\sum_{n=1}^\infty\frac{\mu^2(n)}{n^s}=\frac{\zeta(s)}{\zeta(2s)}.\nonumber 
\ee

Furthermore, the estimate $\left|\sum_{n\leq N}\mu(n)\right|=\mathcal O_\varepsilon(N^{1/2+\varepsilon})$ as $N\to\infty$ is equivalent to the Riemann Hypothesis. P. Sarnak \cite{Sarnak-Mobius-lectures} has recently addressed a number of 
statistical and ergodic properties of the sequences $(\mu(n))_n$ and $(\mu^2(n))_n$. 

\subsection{Notations}
We shall use the standard notation $e(x)=e^{2\pi i x}$.
For every integer $n$ denote by $\omega(n)$ the number of its distinct prime factors. For example, $\omega(1)=0$ and $\omega(2\cdot 3)=\omega(2^{10}\cdot 3^7)=\omega(7\cdot 23)=2$.
We shall also use the notations
\bey
\mathcal P(n)=\{p:\:p|n\},\hspace{.5cm}\mathcal P_2(n)=\{p:\:p^2|n\}.\nonumber
\eey
Notice that if $n\in\mathcal Q$, then $|\mathcal P(n)|=\omega(n)$, $\mathcal P_2(n)=\varnothing$, and $\mathcal P_2(n^2)=\mathcal P(n)$.
For every finite set $\mathcal A\subset\mathcal P$, define 
\be[\mathcal A]=\prod_{p\in\mathcal A}p\nonumber. \ee 
In particular $[\varnothing]=1$. Notice that if $\mathcal A,\mathcal B$ are disjoint, then $[\mathcal A\cup\mathcal B]=[\mathcal A][\mathcal B]$ and $[\mathcal A\cap\mathcal B]=1$.

\section{Formulation of the results}\label{sec1}
The goal of this paper is  to describe a dynamical system `naturally' associated to $\mathcal Q$ and study its statistical and ergodic properties.

\subsection{Correlation functions}\label{subsec-corr-fun}
The first step is the construction of \emph{correlation functions} for $\mathcal Q$. 
Choose $r$ integers $0\leq k_1<k_2<\ldots<k_r$ and consider the set
\be\mathcal Q_N(k_1,k_2,\ldots,k_r)=\{n\leq N:\: \mu^2(n)=\mu^2(n+k_1)=\ldots=\mu^2(n+k_r)=1\}.\nonumber\ee
The ratio 
\be \mathbb E_N(k_1,k_2,\ldots,k_r):=\frac{|\mathcal Q_N(k_1,k_2,\ldots,k_r)|}{N}\label{def-E_N(k1-kr)}\ee
is the frequency of square-free integers $n\leq N$ for which $n+k_1, n+k_2, \ldots, n+k_r$ are also square-free. It also gives the expectation (hence the notation $\mathbb E$) of the product $\mu^2(n)\mu^2(n+k_1)\cdots\mu^2(n+k_r)$ with respect to the uniform measure on $\{1,2,\ldots,N\}$. Notice, by taking $r=1$ and $k_1=0$, that $\mathcal Q_N(0)$ is simply the set of all square-free numbers not greater than $N$. It is well known that 
\be\lim_{N\to\infty}\mathbb E_N(0)=\frac{6}{\pi^2}\approx 0.6079271018\label{density-square-free}\ee
We include the proof of (\ref{density-square-free}) and some of its generalizations in Section \ref{sec2}, see Theorems \ref{thm1}-\ref{thm1''}.
The study of $\mathbb E_N(k_1,\ldots, k_r)$ as $N\to\infty$ is also classical, see L. Mirsky \cite{Mirsky-1949}, R.R. Hall \cite{Hall-1989}, K.M. Tsang \cite{Tsang-1986}, D.R. Heath-Brown \cite{Heath-Brown-1984}.
It is known that the limits 
\bey
c_{r+1}(k_1,\ldots,k_r)=\lim_{N\to\infty}\mathbb E_N(k_1,\ldots,k_r)\label{limit-c_r-quote}
\eey
exist. 
We shall refer to $c_{r+1}$ as the \emph{$(r+1)$-st correlation function for $\mathcal Q$}. Various formul\ae\: for
$c_{r+1}(k_1,\ldots,k_r)$ are known (see Section \ref{sec5}). We shall rewrite the one by L. Mirsky \cite{Mirsky-1949} to express the correlation functions as a sum, namely 
\bey c_{r+1}(k_1,\ldots,k_r)=\sum_{0\leq l'<l''\leq r}\sum_{\scriptsize{\ba{c}\mu^2(d_{l',l''})=1\\ d_{l',l''}^2|k_{l'}-k_{l''}\ea}}\sum_{m_0,m_1,\ldots,m_r\geq 0}(-1)^{\sum_{l=0}^r m_l}\sum_{\scriptsize{\ba{c}\mathcal P_l\subset\mathcal P,\: 0\leq l\leq r \\ |\mathcal P_l|=m_l\\\mbox{$[\mathcal P_{l'}\cap\mathcal P_{l''}]=d_{l',l''}$}\ea}}\frac{1}{\left[\bigcup_{l=0}^r \mathcal P_l\right]^2}.\label{general-c_r+1-sinai-toquote}\eey
The above formula, 
although complicated, plays a role in the spectral analysis of the correlation functions. Let, for example, $r=1$. For every $d\in\mathcal Q$  define
\be\sigma_d= \displaystyle{\sum_{m_0,m_1\geq 0}}(-1)^{m_0+m_1}\sum_{\scriptsize{\ba{c}\mathcal P_0,\mathcal P_1\subset \mathcal P\\|\mathcal P_0|=m_0,\,|\mathcal P_1|=m_1\\ \mbox{$[\mathcal P_0\cap\mathcal P_1]=d$}\ea}}\frac{1}{\left[\mathcal P_0\cup \mathcal P_1\right]^2}.\label{def-sigma_d}\ee
Explicit formul\ae\, for $\sigma_d$ are given in Section \ref{sec5}.
Then 
\be c_2(k)=\sum_{\scriptsize{\ba{c}\mu^2(d)=1\\ d^2|k\ea}}\sigma_d\label{statement-thm3-toquote}\ee
and the corresponding spectral 
measure $\nu$ on $\mathbb S^1$ (i.e. satisfying $c_2(k)=\hat\nu(k)$ by Bochner theorem) 
is pure point, given as sum of $\delta$-functions at the points $e(l/d^2)$, where $d\in\mathcal Q$. More precisely,
\be\nu=\sum_{\mu^2(d)=1}\sigma_d\sum_{l=0}^{d^2-1}\delta_{e(l/d^2)},\label{spectral-measure-nu}\ee
where the convergence of the series is guaranteed by Lemma \ref{prop-formula-sigma-d}. 
The spectrum (i.e. the support of $\nu$) is the group 
\be\Lambda=\left\{e\!\left(\frac{l}{d^2}\right)\!:\:\: 0\leq l\leq d^2-1,\:\:\mu^2(d)=\mu^2(\gcd(l,d^2))=1\right\}\subset\mathbb S^1.\nonumber 
\ee
Notice that 
every element of $\Lambda$ is represented uniquely. 
Moreover, every rational number of the form $\frac{l}{d^2}$ such that $d$ is square-free, $0\leq l\leq d^2-1$, and $\gcd(l,d^2)$ is also square-free can be written as
\be
\frac{l}{d^ 2}=\frac{l_1}{p_1^2}+\ldots+\frac{l_r}{p_m^2}\label{representation-l/d^2},
\ee
for some $l_1,\ldots,l_m$, where $\{p_1,\ldots,p_m\}=\mathcal P(d)$.
This representation (\ref{representation-l/d^2}) is unique if one imposes the restriction $0\leq l_j\leq p_m^2-1$, $1\leq j\leq m$. In other words, the group $\Lambda$ 
is isomorphic to the direct sum $\bigoplus_p\Z/p^2\Z$ (where only finitely many coordinates are non-zero). Therefore, $\Lambda$ is the Pontryagin dual of the direct product group \be\mathbb G
=\prod_p\Z/p^2\Z,\label{groupG}\ee
which is an Abelian compact group (endowed with the product topology). In other words, $\hat {\mathbb G
}\cong\Lambda$.
Each element $\mathbf{g}\in\mathbb G$ is identified with a sequence $(g_{p^2})_{p\in\mathcal P}$ indexed by $\mathcal P$, where $g_{p^2}\in\Z/p^2\Z$: 
\be\mathbf{g}\equiv(g_4,g_9,g_{25},g_{49},\ldots).\nonumber
\ee
Given $\mathbf h\in\mathbb G$, denote by $\mathbb T_\mathbf{h}:\mathbb G\to\mathbb G$ the translation $\mathbb T_{\mathbf h}(\mathbf g)=\mathbf g+\mathbf h$.
Let $\mathbb B$ be the natural $\sigma$-algebra on $\mathbb G$, and let us put the uniform 
measure on each $\Z/p^2\Z$. The corresponding product measure  $\mathbb P$ on $\mathbb B$ is invariant under translations, and therefore it is the Haar measure. 

The ergodic properties of translations on compact Abelian groups (also known as \emph{Kronecker systems}) were studied for the first time by J. von Neumann. He showed \cite{vonNeumann-1932} that such two ergodic translations with the same spectrum, are isomorphic as measure-preserving dynamical systems. This is true in general for ergodic transformations with pure point spectrum and it 
plays an important role in our analysis. Later, P.R. Halmos and J. von Neumann \cite{Halmos-vonNeumann-1942} proved that every ergodic dynamical system with pure point spectrum is isomorphic to a translation on a compact Abelian group. This implies, for example, that every ergodic dynamical system with pure point spectrum is isomorphic to its inverse. For an historical survey on the isomorphism problem see \cite{pittphilsci8813}.

\subsection{A Natural Dynamical System}\label{natural-dyn-sys}
Consider the space $X$ of all bi-infinite sequences $x=\{x(n),\:-\infty<n<\infty\}$ where each $x(n)$ takes value either 0 or 1. Denote by 
$\mathcal B$ the natural $\sigma$-algebra generated by cylinder sets, 
and introduce the probability measure $\Pi$ defined on 
$\mathcal B$ as follows: For every $r\geq 0$ and every $-\infty<k_0<k_1<\ldots<k_r<\infty$
\be\Pi\left\{ x\in X:\: x(k_0)=x(k_1)=\ldots=x(k_r)=1\right\}=c_{r+1}(k_1-k_0,k_2-k_0,\ldots, k_r-k_0)\label{definition-measure-Pi-toquote},\ee
where $c_{r+1}$ is the $(r+1)$-st correlation function (\ref{general-c_r+1-sinai-toquote}). 
It is clear that (\ref{definition-measure-Pi-toquote}) determines the measure $\Pi$ uniquely. We call $\Pi$ \emph{the natural measure corresponding to the set of square-free numbers}. 

If $T$ is the shift on $X$, i.e. $Tx=x'$, $x'(n)=x(n+1)$, then it follows immediately from (\ref{definition-measure-Pi-toquote}) that $\Pi$ is invariant under $T$. 
We can now formulate the main result of this paper:
\begin{maintheorem}\bianco 
\begin{itemize}\item[(i)]
The dynamical system $(X,\mathcal B, \Pi,T)$ is ergodic and has pure point spectrum given by $\Lambda$. 
\item[(ii)] $(X,\mathcal B, \Pi,T)$ is isomorphic to $(\mathbb G, \mathbb B,\mathbb P,\mathbb T_{\mathbf u})$, where $\mathbf u=(1,1,1,\ldots)$.
\end{itemize} 
\end{maintheorem}

P. Sarnak \cite{Sarnak-Mobius-lectures} formulates the result that $(\mathbb G, \mathbb B,\mathbb P,\mathbb T_{\mathbf u})$ is a factor of $(X,\mathcal B, \Pi,T)$. His methods also allow to show that the factor map is in fact an isomorphism. Our approach is rather different and  is based on a spectral analysis of the dynamical system $(X, \mathcal B, \Pi, T)$. The statement in the following corollary can be also found in \cite{Sarnak-Mobius-lectures}.
\begin{maincor}
The dynamical system $(X,\mathcal B, \Pi,T)$ is non weakly-mixing, and its measure-theoretic entropy is zero. 
\end{maincor}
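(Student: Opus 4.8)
Both assertions will follow directly from the Main Theorem together with classical facts about measure-preserving systems, so the proof of the corollary is short; essentially all the content lies in the Main Theorem. The plan is: first, to rule out weak mixing from the non-triviality of the point spectrum $\Lambda$ of $(X,\mathcal B,\Pi,T)$; and second, to deduce that the measure-theoretic entropy vanishes from the isomorphism of part (ii) with a translation on the compact Abelian group $\mathbb G$.

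For the failure of weak mixing, I would invoke the standard spectral criterion: an ergodic system is weakly mixing if and only if the Koopman operator $U_T$ has no eigenvalue other than $1$, i.e.\ no non-constant $L^2$ eigenfunction. By part (i) of the Main Theorem, the point spectrum of $U_T$ equals $\Lambda$, and $\Lambda$ strictly contains $\{1\}$: for instance $e(1/4)\in\Lambda$, taking $d=2$ and $l=1$, since $2$ and $\gcd(1,4)=1$ are both square-free. Hence $U_T$ has a non-constant eigenfunction, and $(X,\mathcal B,\Pi,T)$ is not weakly mixing.

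For the entropy, since measure-theoretic entropy is an isomorphism invariant, part (ii) of the Main Theorem reduces the claim to showing $h_{\mathbb P}(\mathbb T_{\mathbf u})=0$. The group $\mathbb G=\prod_p\Z/p^2\Z$ is a countable product of finite groups, hence compact and metrizable, and it carries a translation-invariant metric $d$ inducing the product topology --- one may take $d(\mathbf g,\mathbf h)=\sum_i 2^{-i}$, the sum over those indices $i$ with $g_{p_i^2}\neq h_{p_i^2}$ in a fixed enumeration of the primes. With respect to $d$ the translation $\mathbb T_{\mathbf u}$ is an isometry, so all its Bowen metrics $d_n(\mathbf g,\mathbf h)=\max_{0\leq k<n}d(\mathbb T_{\mathbf u}^k\mathbf g,\mathbb T_{\mathbf u}^k\mathbf h)$ coincide with $d$; thus the topological entropy of $\mathbb T_{\mathbf u}$ is $0$, and by the variational principle $h_{\mathbb P}(\mathbb T_{\mathbf u})=0$. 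One could instead simply cite the classical theorem that every ergodic system with pure point spectrum has zero entropy, which yields the conclusion from part (i) alone.

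I do not expect any genuine difficulty here; the corollary is a formal consequence of the Main Theorem and textbook ergodic theory. The only steps worth a line of justification are the verification that $\Lambda\neq\{1\}$ --- which is precisely where one uses that $\mathcal Q$ is non-degenerate --- and the exact statement of the spectral characterization of weak mixing.
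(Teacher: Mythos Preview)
Your proposal is correct and matches the paper's treatment: the paper gives no explicit proof of the corollary, presenting it as an immediate consequence of the Main Theorem via standard ergodic theory. Your argument fills in precisely those standard details---non-weak-mixing from the existence of a nontrivial eigenvalue in $\Lambda$, and zero entropy from the isomorphism with a compact group rotation (or equivalently from pure point spectrum)---and nothing more is needed.
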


It is worthwhile to remark that the main focus of \cite{Sarnak-Mobius-lectures} are the topological dynamical systems $M=(O_{\mu(n)},T)$, $S=(O_{\mu^2(n)},T)$ given by the shifts on the orbit closures of $(\mu(n))_{n}$ and $(\mu^2(n))_n$, respectively. The topological entropy of $S$ is positive, equal to $\frac{6}{\pi^2}\log 2$. R. Peckner \cite{Peckner-2012} recently constructed a measure of maximal entropy for
$S$; he showed that this measure is unique, and the corresponding dynamical system is isomorphic to the direct product of 
$(\mathbb G,\mathbb B,\mathbb P,\mathbb T_{\mathbf u})$ and a Bernoulli shift with entropy $\frac{6}{\pi^2}\log 2$. In particular, the dynamical system $(X,\mathcal B, \Pi, T)$ that we consider is its the Pinsker factor.

Our paper is organized as follows. 
Section \ref{sec2} includes the classical computation of the density of square-free numbers and its generalization to square-free numbers avoiding finite sets of prime factors (the proof is given in Appendix \ref{appendix-A}). The latter will be used for the computation of certain relevant constants.
Section \ref{sec5} contains various formul\ae\, for the correlation functions, including the derivation of (\ref{statement-thm3-toquote}) and (\ref{general-c_r+1-sinai-toquote}) from the formula by L. Mirsky.
Section \ref{sec-averages-corr-fun} includes several useful lemmata (some of which are proven in Appendix \ref{appendix-B}) concerning averages and exponential sums for the correlation functions. These results are crucial for the spectral analysis of the dynamical system  $(X,\mathcal B, \Pi,T)$.
%
%
Such 
analysis 
is carried out in Section \ref{sec-spectrum-of-T} and yields the first part of our Main Theorem. 
The analysis of the spectrum for $(\mathbb G,\mathbb B,\mathbb P,\mathbb T_{\mathbf u})$ is done in Section \ref{sec-spectrum-of-G}, and the second part of our Main theorem follows from it, by means of 
a theorem by J. von Neumann \cite{vonNeumann-1932}.

\section*{Acknowledgments}
The authors thank M. Boyle, M. Degli Esposti, G. Forni, P. Sarnak, I. Shkredov, I. Vinogradov for useful discussions, and the anonymous referees for their suggestions to improve on the first version of this paper. 
The first author's work is 
supported by the National Science Foundation under agreement No. DMS-0635607. 
The second author acknowledges the financial support from the NSF, grant No. DMS-0901235.

\section{The density of $\mathcal Q$ and some of its subsets}\label{sec2}
Recall that $\mathbb E_N(0)=\frac{1}{N}\left|\{n\leq N,\: n\in\mathcal Q\}\right|$. The following theorem is very classical.
\begin{theorem}\label{thm1}
\be\lim_{N\to\infty}\mathbb E_N(0)=\frac{6}{\pi^2}\label{density-square-free-new}\ee
\end{theorem}

\begin{proof}
We can write $\mu^2$ as the indicator of the set of square-free numbers by imposing the condition that its argument avoids all arithmetic progressions modulo $p^2$:
\be\mu^2(n)=\prod_{p}\left(1-\chi_{p^2}(n)\right)\label{mu^2=prod}\ee
In the above expression $\chi_{p^2}(n)$ is the indicator of the arithmetic progression $\{p^2l:\:l\in\Z\}$. Let us open the brackets in (\ref{mu^2=prod}):
\be
\mu^2(n)=1-\sum_{p}\chi_{p^2}(n)+\sum_{p_1<p_2}\chi_{p_1^2}(n)\chi_{p_2^2}(n)-\sum_{p_1<p_2<p_3}\chi_{p_1^2}(n)\chi_{p_2^2}(n)\chi_{p_3^2}(n)+\ldots.\nonumber
\ee
We can write
\bey
\mathbb E_N(0) 
&=&\frac{1}{N}\sum_{n\leq N}\mu^2(n)=1-\sum_{p}\frac{1}{p^2}+\sum_{p_1<p_2}\frac{1}{(p_1p_2)^2}-\sum_{p_1<p_2<p_3}\frac{1}{(p_1p_2p_3)^2}+\ldots+\varepsilon_N=\nonumber\\
&=&\prod_{p}\left(1-\frac{1}{p^2}\right)+\varepsilon_N=\frac{1}{\zeta(2)}+\varepsilon_N=\frac{6}{\pi^2}+\varepsilon_N.\nonumber
\eey
Here and below $\varepsilon_N$ 
denotes a remainder that tends to zero as $N\to\infty$.
\end{proof}
The statement of Theorem 1 can actually be refined as follows:
\begin{theorem}\label{thm1'}
\be \mathbb E_N(0)=\frac{6}{\pi^2}+\mathcal O(N^{-1/2})\hspace{.5cm}\mbox{as $N\to\infty$.}\nonumber
\ee
\end{theorem}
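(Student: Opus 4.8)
The plan is to rerun the inclusion–exclusion of Theorem \ref{thm1} while keeping quantitative control of the truncation and the rounding errors. First I would write, for a parameter $z=z(N)$ to be chosen (think $z\asymp N^{1/2}$),
\be
\sum_{n\leq N}\mu^2(n)=\sum_{d\leq\sqrt N}\mu(d)\left\lfloor\frac{N}{d^2}\right\rfloor,\nonumber
\ee
which is exact: every $n\leq N$ contributes $\sum_{d^2\mid n}\mu(d)=\mu^2(n)$, and $d^2\mid n\leq N$ forces $d\leq\sqrt N$. Then replace $\lfloor N/d^2\rfloor$ by $N/d^2+\mathcal O(1)$ to get
\be
\sum_{n\leq N}\mu^2(n)=N\sum_{d\leq\sqrt N}\frac{\mu(d)}{d^2}+\mathcal O\!\left(\sqrt N\right).\nonumber
\ee

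Next I would complete the sum over $d$ to infinity, using
\be
\sum_{d\leq\sqrt N}\frac{\mu(d)}{d^2}=\sum_{d=1}^\infty\frac{\mu(d)}{d^2}-\sum_{d>\sqrt N}\frac{\mu(d)}{d^2}=\frac{1}{\zeta(2)}+\mathcal O\!\left(\frac{1}{\sqrt N}\right),\nonumber
\ee
since $\sum_{d>\sqrt N}d^{-2}=\mathcal O(N^{-1/2})$. Multiplying by $N$ and combining with the $\mathcal O(\sqrt N)$ term from the floor estimate gives $\sum_{n\leq N}\mu^2(n)=\frac{6}{\pi^2}N+\mathcal O(\sqrt N)$, and dividing by $N$ yields the claimed $\mathbb E_N(0)=\frac{6}{\pi^2}+\mathcal O(N^{-1/2})$. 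So the choice $z=\sqrt N$ balances the two error sources automatically; no optimization is really needed here because the divisor $d$ is honestly bounded by $\sqrt N$.

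There is essentially no serious obstacle: the only point requiring a line of care is the identity $\mu^2(n)=\sum_{d^2\mid n}\mu(d)$, which is the multiplicative-function restatement of the Euler-product expansion $\mu^2(n)=\prod_p(1-\chi_{p^2}(n))$ already used in the proof of Theorem \ref{thm1}. One should also note that the error terms are genuinely $\mathcal O$ and not $o$: the number of $d\le\sqrt N$ is $\mathcal O(\sqrt N)$, so the accumulated $\mathcal O(1)$'s from the floors total $\mathcal O(\sqrt N)$, and the tail of $\sum\mu(d)/d^2$ is bounded by the tail of $\sum 1/d^2$ with no cancellation needed. This crude bound suffices for the stated exponent $1/2$; sharper exponents (as in Walfisz-type results) would require exploiting cancellation in $\sum_{d>\sqrt N}\mu(d)/d^2$, but that is not needed for Theorem \ref{thm1'}.
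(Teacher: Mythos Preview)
Your proof is correct and is exactly the classical argument the paper uses: the paper does not prove Theorem~\ref{thm1'} separately but deduces it as the special case $\mathcal S=\varnothing$ of Theorem~\ref{thm1''}, whose proof in Appendix~\ref{appendix-A} proceeds via the identity $\mu^2(n)w_{\mathcal S}(n)=\sum_{d^2\mid n}\mu(d)w_{\mathcal S}(d)w_{\mathcal S}(n/d)$, the swap of summation to get $\sum_{d\le\sqrt N}\mu(d)\lfloor N/d^2\rfloor$ (in the case $\mathcal S=\varnothing$), the $\mathcal O(\sqrt N)$ bound from the floors, and completion of the tail $\sum_{d>\sqrt N}\mu(d)/d^2=\mathcal O(N^{-1/2})$. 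Your write-up matches this step for step.
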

In other words, $\varepsilon_N$ in the proof of Theorem \ref{thm1} satisfies the estimate $|\varepsilon_N|=\mathcal O(N^{-1/2})$.
This result is also very classical, and 
is a special case of Theorem \ref{thm1''} below.
Let us fix a finite set $\mathcal{S}\subset \mathcal P$ and define the set
\be\mathcal Q_N^{\mathcal S}(0)=\{n\leq N:\: \mu^2(n)=1,\: p\in\mathcal S\Rightarrow p\nmid n\}\label{def-Q_N^S(0)}\ee
of all square-free numbers not bigger than $N$ and not divisible by any of the primes $p\in\mathcal S$. For example, $\mathcal Q_N^{\{2\}}(0)$ is the set of odd square-free numbers not bigger than $N$. Notice that when $\mathcal S$ is empty we get the full set of square-free numbers, i.e. $\mathcal Q_N^{\varnothing}(0)=\mathcal Q_N(0)$.
In analogy with (\ref{def-E_N(k1-kr)}), let us define
\be\mathbb E_N^{\mathcal S}(0)=\frac{|\mathcal Q_N^{\mathcal S}(0)|}{N}\nonumber\ee
We have the following
\begin{theorem}\label{thm1''} For every finite $\mathcal S\subset \mathcal P$ we have
\be\mathbb E_N^{\mathcal S}(0)=\frac{\alpha(\mathcal S)}{\zeta(2)}+\mathcal O_{\mathcal S}(N^{-1/2})\hspace{.5cm}\mbox{as $N\to\infty$},\nonumber\ee
where \be\alpha(\mathcal S)=\prod_{p\in\mathcal S}\frac{p}{p+1}\nonumber\ee
and the constant $C(\mathcal S)$ implied by the $\mathcal O_{\mathcal S}$-notation can be taken as
 \be C(\mathcal S)=4\prod_{p\in\mathcal S}\frac{p-1}{p}+\left(\prod_{p\in\mathcal S}p-1\right)-\prod_{p\in\mathcal S}(p-1).\nonumber\ee
\end{theorem}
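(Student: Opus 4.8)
The plan is to mimic the inclusion–exclusion argument of Theorem~\ref{thm1}, but this time keeping careful track of the error term and of the extra constraint imposed by $\mathcal S$. First I would write the indicator of $\mathcal Q_N^{\mathcal S}(0)$ as a product over primes: for $p\in\mathcal S$ we must have $p\nmid n$, so the factor is $(1-\chi_p(n))$, while for $p\notin\mathcal S$ we must have $p^2\nmid n$, so the factor is $(1-\chi_{p^2}(n))$. Thus
\be
\mathbf 1_{\mathcal Q_N^{\mathcal S}(0)}(n)=\prod_{p\in\mathcal S}\bigl(1-\chi_p(n)\bigr)\prod_{p\notin\mathcal S}\bigl(1-\chi_{p^2}(n)\bigr).\nonumber
\ee
Opening the brackets and summing over $n\le N$, each term is a product of indicators of arithmetic progressions; by the Chinese Remainder Theorem such a product is the indicator of a single progression whose modulus is the product of the relevant $p$'s (to the first or second power). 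Counting lattice points in $[1,N]$ in such a progression of modulus $m$ gives $N/m+\theta$ with $|\theta|<1$, so replacing each count by its main term $N/m$ costs an error of at most $1$ per term.

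The next step is to sum the main terms. The main term of $\mathbb E_N^{\mathcal S}(0)$ equals $\prod_{p\in\mathcal S}(1-\tfrac1p)\prod_{p\notin\mathcal S}(1-\tfrac1{p^2})$, and one checks the identity
\be
\prod_{p\in\mathcal S}\Bigl(1-\frac1p\Bigr)\prod_{p\notin\mathcal S}\Bigl(1-\frac1{p^2}\Bigr)
=\Bigl(\prod_{p\in\mathcal S}\frac{p}{p+1}\Bigr)\prod_{p}\Bigl(1-\frac1{p^2}\Bigr)=\frac{\alpha(\mathcal S)}{\zeta(2)},\nonumber
\ee
using $(1-\tfrac1p)=\tfrac{p}{p+1}(1-\tfrac1{p^2})$. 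This gives the claimed constant. To control the tail I would, as in Theorem~\ref{thm1}, truncate the product over $p\notin\mathcal S$ at primes $p\le\sqrt N$ (terms with a squared modulus exceeding $N$ contribute nothing to the lattice-point count), so only finitely many terms actually occur; the discarded part of the infinite product $\prod_{p\le\sqrt N}(1-p^{-2})$ versus $\prod_p(1-p^{-2})$ is $\mathcal O(N^{-1/2})$ by comparison with $\sum_{p>\sqrt N}p^{-2}\ll \int_{\sqrt N}^\infty t^{-2}\,dt = N^{-1/2}$.

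The delicate point — and the main obstacle — is proving the \emph{explicit} constant $C(\mathcal S)$ in the $\mathcal O_{\mathcal S}(N^{-1/2})$ bound rather than just an implied constant. The error has three sources: (a) the $\le 1$-per-term rounding errors from $N/m+\theta$; (b) the truncation at $\sqrt N$; (c) the interaction of $\mathcal S$ with the square-free condition. For (a), after truncation the number of surviving terms is bounded by $2^{|\{p\le\sqrt N\}|}$ which is far too large, so one must instead bound $\sum_m 1/m$ over the relevant moduli $m$ (these are of the form $[\mathcal A][\mathcal B]^2$ with $\mathcal A\subset\mathcal S$, $\mathcal B\cap\mathcal S=\varnothing$), which telescopes into a product $\prod_{p\in\mathcal S}(1+\tfrac1p)\prod_{p\le\sqrt N}(1+\tfrac1{p^2})$ — still not obviously giving the stated $C(\mathcal S)$. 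I expect the clean route is to split $n\le N$ according to which primes in $\mathcal S$ divide $n$ and then reduce to the $\mathcal S=\varnothing$ case of Theorem~\ref{thm1'} on a sub-progression, so that the three explicit summands in $C(\mathcal S)$ — namely $4\prod_{p\in\mathcal S}\tfrac{p-1}{p}$ (the Theorem~\ref{thm1'} error, rescaled), $\prod_{p\in\mathcal S}p-1$, and $-\prod_{p\in\mathcal S}(p-1)$ — arise respectively from the main sub-count, the number of residue classes mod $\prod_{p\in\mathcal S}p$ examined, and an inclusion–exclusion correction. Assembling these bookkeeping terms into exactly the stated $C(\mathcal S)$ is the routine-but-fiddly computation I would defer to Appendix~\ref{appendix-A} as the excerpt indicates.
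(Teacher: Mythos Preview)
Your main-term computation is correct, including the identity $(1-\tfrac1p)=\tfrac{p}{p+1}(1-\tfrac1{p^2})$ that produces $\alpha(\mathcal S)/\zeta(2)$. But the error analysis does not close, and you rightly flag this: opening the full product $\prod_{p\in\mathcal S}(1-\chi_p)\prod_{p\notin\mathcal S}(1-\chi_{p^2})$ produces $2^{\pi(\sqrt N)}$ terms, and neither of your proposed rescues (bounding $\sum_m 1/m$ over the moduli, or splitting by divisibility in $\mathcal S$ and reducing to Theorem~\ref{thm1'}) is the route the paper takes.

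The paper's device is the convolution identity
\[
\mu^2(n)\,w_{\mathcal S}(n)\;=\;\sum_{d:\,d^2\mid n}\mu(d)\,w_{\mathcal S}(d)\,w_{\mathcal S}\!\left(\tfrac{n}{d}\right),
\]
where $w_{\mathcal S}$ is the indicator of integers coprime to every $p\in\mathcal S$ (this generalises $\mu^2(n)=\sum_{d^2\mid n}\mu(d)$). Summing over $n\le N$ and swapping gives $|\mathcal Q_N^{\mathcal S}(0)|=\sum_{d\le\sqrt N}\mu(d)w_{\mathcal S}(d)\,\eta_d^{\mathcal S}(N)$ with $\eta_d^{\mathcal S}(N)=|\{m\le N/d^2:\,w_{\mathcal S}(m)=1\}|$. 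The outer sum now has only $O(\sqrt N)$ terms, and each inner count is periodic modulo $[\mathcal S]=\prod_{p\in\mathcal S}p$ with density $\prod_{p\in\mathcal S}\tfrac{p-1}{p}$. The main sum is then completed using the Dirichlet-series identity $\sum_n\mu(n)w_{\mathcal S}(n)/n^2=\bigl(\zeta(2)\prod_{p\in\mathcal S}(1-p^{-2})\bigr)^{-1}$, which follows from $(\mu w_{\mathcal S})*w_{\mathcal S}=\delta_1$. The three pieces of $C(\mathcal S)$ thus arise differently from your guess: $\bigl(\prod_{p\in\mathcal S}p-1\bigr)-\prod_{p\in\mathcal S}(p-1)$ is the worst-case error in the periodic count $\eta_d^{\mathcal S}(N)$ coming from the residue of $\lfloor N/d^2\rfloor$ modulo $[\mathcal S]$; one factor of $2\prod_{p\in\mathcal S}\tfrac{p-1}{p}$ comes from replacing $\lfloor N/d^2\rfloor$ by $N/d^2$; and the other $2\prod_{p\in\mathcal S}\tfrac{p-1}{p}$ bounds the tail $\sum_{d>\sqrt N}d^{-2}$.
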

The proof of Theorem \ref{thm1''} is presented in Appendix \ref{appendix-A}; it 
implies the existence of the asymptotic densities
\be \lim_{N\to\infty}\mathbb E_N^{\mathcal S}(0)=\frac{\alpha(\mathcal S)}{\zeta(2)}.\label{densities-E_N^S(0)}\ee
For example, the density of the set of odd square-free numbers is $\frac{4}{\pi^2}$ (i.e. odd and even square-free numbers are in 2:1 proportion). Analogously, by choosing $\mathcal S=\{p\}$,  we see that the set of square-free numbers not divisible by $p$ is \virg{$p$ times as large} (in the sense of density) as the set of those divisible by $p$.
If, for instance, we choose $\mathcal S=\{2,3\}$ we obtain $\alpha(\{2,3\})=1/2$, and we see that 50\% of the square-free numbers is not divisible by either 2 or 3.

\section{The Formul\ae\, for the correlation functions}\label{sec5}
L. Mirsky \cite{Mirsky-1949} proved that
\be c_{r+1}(k_1,k_2,\ldots,k_r)=\prod_{p}\left(1-\frac{A_p^{(r+1)}(k_1,k_2,\ldots,k_r)}{p^2}\right),\label{general-formula-c_r+1-Mirsky}\ee
where $A_p^{(r+1)}(k_1,k_2,\ldots,k_r)=|\{0,k_1 (\!\!\!\!\mod p^2), k_2 (\!\!\!\!\mod p^2),\ldots, k_r (\!\!\!\!\mod p^2)\}|$. 
Notice that \be 1\leq A_p^{(r+1)}(k_1, k_2, \ldots, k_r)\leq r \nonumber\ee 
for finitely many $p$ and $A_p^{(r+1)}(k_1,\ldots,k_r)=r+1$ for infinitely many $p$. 
For $r=1$, we have 
\be A_p^{(2)}(k)=\begin{cases}1& p^2| k;\\2,&\mbox{otherwise}.\end{cases}\nonumber\ee
This gives, for instance,
\be c_2(k)=\prod_{p^2|k}\left(1-\frac{1}{p^2}\right)\prod_{p^2\nmid k}\left(1-\frac{2}{p^2}\right)\label{Mirsky-c2}.\ee
It will be useful for us to write $c_2(k)$ (and in general $c_{r+1}(k_1,\ldots,k_r)$) as a sum. 
Recall the definition 
of $\sigma_d$ from Section \ref{subsec-corr-fun}. We prove the following formula for $\sigma_d$:
\begin{lem}\label{prop-formula-sigma-d}
\be\sigma_d=\frac{1}{d^2}\prod_{p\,\nmid d}\left(1-\frac{2}{p^2}\right).\label{formula-sigma_d}\ee
\end{lem}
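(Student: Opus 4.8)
The plan is to recognize the triple sum defining $\sigma_d$ as a disguised Euler product. First, since $d\in\mathcal Q$ is square-free, the constraint $[\mathcal P_0\cap\mathcal P_1]=d$ is equivalent to $\mathcal P_0\cap\mathcal P_1=\mathcal P(d)$. So I would parametrize the admissible pairs by writing $\mathcal P_0=\mathcal P(d)\cup\mathcal A$ and $\mathcal P_1=\mathcal P(d)\cup\mathcal B$, where $\mathcal A,\mathcal B$ range over all pairs of \emph{disjoint} finite subsets of $\mathcal P\setminus\mathcal P(d)$; this correspondence is a bijection. Under it, $\mathcal P_0\cup\mathcal P_1=\mathcal P(d)\sqcup\mathcal A\sqcup\mathcal B$, hence $\left[\mathcal P_0\cup\mathcal P_1\right]^2=d^2[\mathcal A]^2[\mathcal B]^2$, and $(-1)^{m_0+m_1}=(-1)^{2|\mathcal P(d)|+|\mathcal A|+|\mathcal B|}=(-1)^{|\mathcal A|+|\mathcal B|}$. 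Therefore
\be
\sigma_d=\frac{1}{d^2}\sum_{\substack{\mathcal A,\mathcal B\subset\mathcal P\setminus\mathcal P(d)\\ \mathcal A\cap\mathcal B=\varnothing}}\frac{(-1)^{|\mathcal A|+|\mathcal B|}}{[\mathcal A]^2[\mathcal B]^2}.\nonumber
\ee

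Next I would check absolute convergence so that the sum can be reorganized as a product over primes: the sum of absolute values is dominated by $\big(\sum_{\mathcal A\subset\mathcal P}[\mathcal A]^{-2}\big)^2=\prod_p(1+p^{-2})^2<\infty$. Then the standard multiplicativity argument applies: for each prime $p\nmid d$ there are exactly three mutually exclusive possibilities contributing to a term of the sum, namely $p\notin\mathcal A\cup\mathcal B$ (local factor $1$), $p\in\mathcal A$ (local factor $-p^{-2}$), or $p\in\mathcal B$ (local factor $-p^{-2}$). Collecting, the local factor at $p$ is $1-\tfrac1{p^2}-\tfrac1{p^2}=1-\tfrac2{p^2}$, so the double sum equals $\prod_{p\nmid d}\big(1-\tfrac2{p^2}\big)$, which gives \eqref{formula-sigma_d}.

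The bookkeeping around the disjointness constraint and the identification of the bijection $(\mathcal P_0,\mathcal P_1)\leftrightarrow(\mathcal P(d),\mathcal A,\mathcal B)$ is the only place one must be careful; the analytic content is merely the absolute convergence estimate above, which legitimizes passing from the sum to the Euler product. I do not expect any serious obstacle. (One could alternatively derive \eqref{formula-sigma_d} from Mirsky's formula \eqref{Mirsky-c2} together with \eqref{statement-thm3-toquote} by Möbius inversion over the divisors $d^2\mid k$, but the direct combinatorial computation above seems cleaner and also re-proves the convergence needed for \eqref{spectral-measure-nu}.)
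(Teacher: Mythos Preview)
Your argument is correct and is essentially the same as the paper's: both recognize that the constraint $[\mathcal P_0\cap\mathcal P_1]=d$ forces $\mathcal P_0=\mathcal P(d)\sqcup\mathcal A$, $\mathcal P_1=\mathcal P(d)\sqcup\mathcal B$ with $\mathcal A,\mathcal B$ disjoint in $\mathcal P\setminus\mathcal P(d)$, and then factor out $1/d^2$. The only cosmetic difference is that the paper first groups terms by the union $\mathcal P'=\mathcal A\sqcup\mathcal B$ of size $M$, picks up the binomial count $\sum_{m}\binom{M}{m}=2^M$ for the ordered splittings, and then reads off $\sum_{\mathcal P'}(-2)^{|\mathcal P'|}/[\mathcal P']^2=\prod_{p\nmid d}(1-2/p^2)$, whereas you pass directly to the Euler product prime-by-prime; these are the same computation.
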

\begin{proof}
Recall that, since $d$ is square-free, $|\mathcal P(d)|=\omega(d)$. By setting $m=m_1-\omega(d)$ and $M=m_1+m_2-2\omega(d)$ in (\ref{def-sigma_d}) we obtain
\bey
\sigma_d&=&\sum_{0\leq m\leq M}(-1)^{-2\omega(d)}(-1)^M{M\choose m}\sum_{\scriptsize{\ba{c}\mathcal P'\subset\mathcal P\smallsetminus\mathcal P(d) \\ | \mathcal P'|=M\ea}}\frac{1}{d^2}\frac{1}{[\mathcal P']^2}=\nonumber\\
&=&\frac{1}{d^2}\sum_{M\geq 0}\sum_{\scriptsize{\ba{c}\mathcal P'\subset\mathcal P\smallsetminus\mathcal P(d) \\ | \mathcal P'|=M\ea}}\frac{(-2)^M}{[\mathcal P']^2}=\frac{1}{d^2}\prod_{p\nmid d}\left(1-\frac{2}{p^2}\right).\nonumber\eey
\end{proof}
In particular, Lemma \ref{prop-formula-sigma-d} shows that $\sigma_d$ is positive and bounded away from zero and infinity. More precisely 
\be0<\sigma_1\leq \sigma_d<\frac{6}{\pi^2}\nonumber,\ee
where $\sigma_1=\prod_{p}\left(1-\frac{2}{p^2}\right)\approx0.3226340989$. 
We can also rewrite $\sigma_d=\sigma_1\cdot\prod_{p|d}\frac{1}{p^2-2}$. 

\begin{prop}\label{new-thm2&3}
Let $k$ be an arbitrary integer. Then 
\be c_2(k)=\sum_{\scriptsize{\ba{c}\mu^2(d)=1\\ d^2|k\ea}}\sigma_d.\label{statement-thm3}\ee
\end{prop}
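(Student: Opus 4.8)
The plan is to prove the identity $c_2(k)=\sum_{\mu^2(d)=1,\,d^2|k}\sigma_d$ by showing that the right-hand side, when $\sigma_d$ is replaced by the closed-form expression $\frac{1}{d^2}\prod_{p\nmid d}(1-\frac{2}{p^2})$ from Lemma \ref{prop-formula-sigma-d}, collapses to Mirsky's product formula (\ref{Mirsky-c2}) for $c_2(k)$. So the first step is simply to substitute (\ref{formula-sigma_d}) into the sum and pull out the common factor $\sigma_1=\prod_p(1-\frac{2}{p^2})$, using the rewriting $\sigma_d=\sigma_1\prod_{p|d}\frac{1}{p^2-2}$ already recorded after Lemma \ref{prop-formula-sigma-d}. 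This turns the claimed right-hand side into $\sigma_1\sum_{d:\,\mu^2(d)=1,\,d^2|k}\prod_{p|d}\frac{1}{p^2-2}$.

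The second step is to recognize the sum over $d$ as a multiplicative (Euler-product-style) sum. The condition $d^2\mid k$ with $d$ square-free means exactly that $d\mid \mathrm{rad}$, where I may take the product to range over those primes $p$ with $p^2\mid k$; writing $d=[\mathcal A]$ for a subset $\mathcal A\subseteq\{p:p^2\mid k\}$, the sum factors as $\prod_{p^2|k}\left(1+\frac{1}{p^2-2}\right)=\prod_{p^2|k}\frac{p^2-1}{p^2-2}$. Hence the right-hand side equals $\sigma_1\prod_{p^2|k}\frac{p^2-1}{p^2-2} = \prod_p\left(1-\frac{2}{p^2}\right)\prod_{p^2|k}\frac{p^2-1}{p^2-2}$.

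The third step is bookkeeping: split the leading product $\prod_p(1-\frac{2}{p^2})$ over the two classes $p^2\mid k$ and $p^2\nmid k$, and for the primes with $p^2\mid k$ combine $(1-\frac{2}{p^2})\cdot\frac{p^2-1}{p^2-2} = \frac{p^2-2}{p^2}\cdot\frac{p^2-1}{p^2-2} = 1-\frac{1}{p^2}$. What remains is $\prod_{p^2|k}(1-\frac{1}{p^2})\prod_{p^2\nmid k}(1-\frac{2}{p^2})$, which is precisely (\ref{Mirsky-c2}), i.e. $c_2(k)$. One should also note that all series and products converge absolutely (the product over $p^2\nmid k$ converges since $\sum 1/p^2<\infty$, and the sum over $d$ is finite when $k\neq 0$; the case $k=0$, where every $d$ with $\mu^2(d)=1$ occurs, is handled by Lemma \ref{prop-formula-sigma-d}, which guarantees $\sum_d\sigma_d$ converges).

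I do not expect a genuine obstacle here — the statement is essentially an algebraic reconciliation of two formulas for the same quantity. The only mildly delicate point is making sure the set of primes indexing the sum over $d$ is handled cleanly in the degenerate case $k=0$ (and $k=\pm1$, where the product over $p^2\mid k$ is empty and the identity reads $c_2(\pm1)=\sigma_1$), but this is routine once the convergence provided by Lemma \ref{prop-formula-sigma-d} is invoked. An alternative, equally short route would bypass the closed form for $\sigma_d$ and instead expand Mirsky's product (\ref{Mirsky-c2}) directly by opening the brackets and collecting terms according to $d:=[\text{set of primes } p \text{ with } p^2|k \text{ that are "used"}]$, matching term-by-term with the definition (\ref{def-sigma_d}) of $\sigma_d$; I would keep the first route as the main line since Lemma \ref{prop-formula-sigma-d} is already available.
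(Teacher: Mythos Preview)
Your proposal is correct and follows essentially the same route as the paper's proof: substitute the closed form $\sigma_d=\sigma_1\prod_{p|d}\frac{1}{p^2-2}$ from Lemma~\ref{prop-formula-sigma-d}, factor the resulting sum over square-free $d$ with $d^2\mid k$ into the product $\prod_{p^2|k}\bigl(1+\frac{1}{p^2-2}\bigr)$, and recombine with $\sigma_1=\prod_p(1-\tfrac{2}{p^2})$ to recover Mirsky's formula~(\ref{Mirsky-c2}). Your treatment is in fact slightly more explicit about the edge cases $k=0$ and convergence than the paper's version.
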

\begin{proof}
Since 
$\mathcal P_2(k)=\{p:\: p^2|k\}$ and $\mathcal D(k)=\{\prod_{p\in\mathcal P'}p:\: \mathcal P'\subset\mathcal P_2(k)\}$, by Lemma \ref{prop-formula-sigma-d} gives 
\bey
\sum_{d\in \mathcal D(k)}\sigma_d&=&\sum_{d\in\mathcal D(k)}\frac{1}{d^2}\prod_{p\nmid d}\left(1-\frac{2}{p^2}\right)=\prod_{p}\left(1-\frac{2}{p^2}\right)\sum_{d\in\mathcal D(k)}\frac{1}{d^2}\prod_{p|d}\left(1-\frac{2}{p^2}\right)=\nonumber\\
&=&\prod_p\left(1-\frac{2}{p^2}\right)\sum_{d\in\mathcal D(k)}\prod_{p|d}\frac{1}{p^2-2}=\prod_{p}\left(1-\frac{2}{p^2}\right)\prod_{p\in\mathcal P_2(d)}\left(1+\frac{1}{p^2-2}\right)=\nonumber\\
&=&\prod_{p^2|k}\left(1-\frac{1}{p^2}\right)\prod_{p^2\nmid k}\left(1-\frac{2}{p^2}\right)=\,c_2(k)\nonumber
\eey
by (\ref{Mirsky-c2}).
\end{proof}

In particular, if $k=0$, then $\mathcal D(0)=\mathcal Q$ and $\mathcal P_2(0)=\mathcal P$ and we retrieve the known fact 
\bey
c_2(0)&=&\sum_{\mu^2(d)=1}\sigma_d=
\prod_p\left(1-\frac{1}{p^2}\right)=\frac{6}{\pi^2}.\nonumber
\eey

\begin{remark}
Proposition \ref{new-thm2&3} 
shows that the value of $c_2(k)$ depends on the arithmetic properties of $k$. This fact is certainly very unusual from the point of view of Probability Theory and Statistical Mechanics. 
If $k$ is square-free, then the function $c_2(k)$ takes the constant value $\sigma_1$. 
Analogously, $c_2(k)$ is constant along any subsequence of numbers $k$ sharing the same set of divisors that are the square of a square-free number. If we define $\mathcal D(k):=\{d:\:\mu^2(d)=1,\:d^2|k\}$, then $\mathcal D(k)=\mathcal D(k')\Rightarrow c_2(k)=c_2(k')$. The opposite implication follows from the formula (\ref{formula-sigma_d}). 
Observe that every set $\mathcal D(k)$ is of the form 
\be\mathcal D(k)=\left\{\prod_{p\in\mathcal P'}p:\:\mathcal P'\subseteq\mathcal P_2(k)\right\},\ee where 
$ \mathcal P_2(k)=\{p:\: p^2| k\}$. This means that $|\mathcal D(k)|=2^{|\mathcal P_2(k)|}$ and $\mathcal D(k)=\mathcal D(k')\iff\mathcal P_2(k)=\mathcal P_2(k')$.
The set of $k$ such that $\mathcal P_2(k)=\varnothing$ is the set of square-free numbers, and we know that it has positive density (equal to $6/\pi^2$, given by (\ref{density-square-free})). 
In general, we have the following
\end{remark}
\begin{prop}[Density of the level sets of $c_2$]\label{thm-density-level-sets-for-c2}
Fix a square-free number $d$.  
Then the density of those $k$'s such that $c_2(k)=c_2(d^2)$ exists and is given by
\be \frak d(d^2):=\lim_{N\to\infty}\frac{1}{N}\left|\left\{k\leq N:\: c_2(k)=c_2(d^2)\right\}\right|=\frac{6}{\pi^2}\prod_{p|d}\frac{1}{p^2-1}.\label{statement-thm-density-level-sets-for-c2}\ee
\end{prop}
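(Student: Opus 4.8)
The plan is to characterize, for a fixed square-free $d$, exactly which integers $k$ satisfy $c_2(k)=c_2(d^2)$, and then compute the density of that set by an inclusion-exclusion / Euler-product argument analogous to the proof of Theorem \ref{thm1}. By Proposition \ref{new-thm2&3} together with formula (\ref{formula-sigma_d}) (see the Remark), we have $c_2(k)=c_2(d^2)$ if and only if $\mathcal P_2(k)=\mathcal P_2(d^2)=\mathcal P(d)$. So, writing $\mathcal P(d)=\{p_1,\ldots,p_m\}$, the level set is
\[
L(d)=\Big\{k\geq 1:\ p_j^2\mid k\ \text{for all}\ j,\ \text{and}\ p^2\nmid k\ \text{for all}\ p\notin\mathcal P(d)\Big\}.
\]
Thus $k\in L(d)$ iff $k=d^2\cdot n$ where $n$ ranges over integers such that $p^2\nmid n$ for every prime $p\notin\mathcal P(d)$ (there is no further constraint at the primes $p_j$, since $p_j^2\mid k$ is automatic and higher powers of $p_j$ are allowed). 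Equivalently, $n$ must be ``square-free away from $\mathcal P(d)$''.

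Next I would count. We have
\[
\big|\{k\leq N:\ k\in L(d)\}\big|=\Big|\Big\{n\leq N/d^2:\ p^2\nmid n\ \text{for all}\ p\notin\mathcal P(d)\Big\}\Big|.
\]
The density of the set of $n$ that are square-free away from a fixed finite set $\mathcal P(d)$ is computed exactly as in Theorem \ref{thm1}: write its indicator as $\prod_{p\notin\mathcal P(d)}(1-\chi_{p^2}(n))$, expand, and sum termwise over $n\leq M$ (with $M=N/d^2$), the error terms going to $0$. This yields density $\prod_{p\notin\mathcal P(d)}(1-1/p^2)=\prod_{p\nmid d}(1-1/p^2)$. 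Hence
\[
\frak d(d^2)=\frac{1}{d^2}\prod_{p\nmid d}\Big(1-\frac{1}{p^2}\Big)
=\frac{1}{d^2}\cdot\frac{6/\pi^2}{\prod_{p\mid d}(1-1/p^2)}
=\frac{6}{\pi^2}\cdot\frac{1}{d^2}\prod_{p\mid d}\frac{p^2}{p^2-1}
=\frac{6}{\pi^2}\prod_{p\mid d}\frac{1}{p^2-1},
\]
where the last step uses $d^2=\prod_{p\mid d}p^2$. This is exactly (\ref{statement-thm-density-level-sets-for-c2}).

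The only genuine content beyond bookkeeping is the first step: correctly identifying the level set via the injectivity of $d\mapsto\sigma_d$-data, i.e. that $c_2(k)$ is determined by (and determines) $\mathcal P_2(k)$ — but this is already spelled out in the Remark preceding the statement, so it can be invoked directly. The counting step is a routine rerun of the sieve argument of Theorem \ref{thm1} (indeed it is essentially a special case of the reasoning behind Theorem \ref{thm1''}), with the rescaling by $d^2$ the only new wrinkle; one should just check that the tail/error terms are uniform enough to survive the rescaling, which they are since $d$ is fixed. I do not expect any real obstacle; the main point to be careful about is not to impose a spurious square-free condition at the primes dividing $d$ when translating $k\in L(d)$ into a condition on $n=k/d^2$.
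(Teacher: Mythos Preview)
Your argument is correct. Both you and the paper begin by identifying the level set via the equivalence $c_2(k)=c_2(d^2)\iff\mathcal P_2(k)=\mathcal P(d)$, but the counting strategies differ.

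The paper writes each $k$ in the level set as $k=p_1^{a_1}\cdots p_m^{a_m}q$ with $a_j\geq 2$, $q$ square-free and coprime to $d$; it then fixes the exponent vector $(a_1,\ldots,a_m)$, invokes Theorem~\ref{thm1''} to get the density $\frac{6}{\pi^2}\,p_1^{-a_1}\cdots p_m^{-a_m}\prod_j\frac{p_j}{p_j+1}$, and finally sums a geometric series over each $a_j\geq 2$. You instead factor out $d^2$ once, observe that $n=k/d^2$ ranges precisely over integers that are square-free away from $\mathcal P(d)$, and compute that single density by the sieve of Theorem~\ref{thm1}. Your route is shorter and avoids both Theorem~\ref{thm1''} and the sum over exponents; the paper's decomposition, on the other hand, is exactly the template reused in Appendix~\ref{appendix-B} for the proofs of Lemmata~\ref{lem-sum-c_2(d^2l)}--\ref{lem-sum-c_2(d^2l+t)-general}, so its apparent overhead pays off later. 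Your caution about not imposing a spurious square-free condition at the primes dividing $d$ is exactly the point where the two parametrizations diverge, and you handled it correctly.
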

\begin{proof}
If $\mathcal P(d)=\{p_1,p_2,\ldots,p_m\}$ then $k$ satisfies $c_2(k)=c_2(d^2)$ if and only if it is of the form $k=p_1^{a_1}\cdots p_m^{a_m}q$, where  $\mu^2(q)=1$, $a_j\geq2$, and $p_j\nmid q$ for every $j=1,\ldots,m$.
Fix $a_1,\ldots,a_m\geq 2$. Then 
\be\frac{1}{N}\left|\left\{k\leq N,\:k=p_1^{a_1}\cdots p_m^{a_m}q,\: \mu^2(q)=1,\: p_j\nmid q\:\mbox{for $j=1,\ldots,m$} \right\}\right|= \frac{1}{p_1^{a_1}\cdots p_m^{a_m}}\mathbb E_{(N/(p_1^{a_1}\cdots p_m^{a_m}))}^{\{p_1,\ldots,p_m\}}(0)\nonumber\ee
and, by Theorem \ref{thm1''}, the limit as $N\to\infty$ is
\be\frac{6}{\pi^2}\frac{1}{p_1^{a_1}\cdots p_m^{a_m}}\prod_{j=1}^m\frac{p_j}{p_j+1}=\prod_{j=1}^m\frac{1}{p_j^{a_j-1}(p_j+1)}\nonumber\ee
Now, by summing over all $a_j\geq 2$, we obtain
\be\frac{6}{\pi^2}\prod_{j=1}^m\frac{1}{p_j+1}\sum_{a_j\geq 2}\frac{1}{p_j^{a_j-1}}=\frac{6}{\pi^2}\prod_{j=1}^m\frac{1}{(p_j+1)(p_j-1)}=\frac{6}{\pi^2}\prod_{j=1}^m\frac{1}{p_j^2-1}\nonumber\ee
and the proposition is proven.
\end{proof}
\begin{remark}
The argument in the proof of Proposition  \ref{thm-density-level-sets-for-c2} will also be used in Appendix \ref{appendix-B}. We can check that
\be\sum_{\mu^2(d)=1}\frak d(d^2)=\sum_{m\geq0}\sum_{\scriptsize{\ba{c} \mathcal P'\subset \mathcal P\\|\mathcal P'|=m\ea}}\frac{6}{\pi^2}\prod_{p\in\mathcal P'}\frac{1}{p^2-1}=\frac{6}{\pi^2}\prod_p\left(1+\frac{1}{p^2-1}\right)=\frac{6}{\pi^2}\prod_{p}\left(1-\frac{1}{p^2}\right)^{-1}=1.\nonumber\ee
\end{remark}
Here we present the values of $\frak d(d^2)$ for square-free numbers $d\leq 17$. The sum of the corresponding densities is $\approx 97.6\%$ and one can check that $\sum_{d\leq 42:\: \mu^2(d)=1}\frak d(d^2)>99\%$.\\\\ 
\begin{tabular}{|c||c|c|c|c|c|c|c|c|c|c|c|c|c|}
\hline
$d$&1&2&3&5&6&7&10&11&13&14&15&17&$\ldots$\\
\hline
&&&&&&&&&&&&&\\
$\frak d(d^2)$&$\displaystyle{\frac{6}{\pi^2}}$&$\displaystyle{\frac{2}{\pi^2}}$&$\displaystyle{\frac{3}{4\pi^2}}$&$\displaystyle{\frac{1}{4\pi^2}}$&$\displaystyle{\frac{1}{4\pi^2}}$&$\displaystyle{\frac{1}{8\pi^2}}$&$\displaystyle{\frac{1}{12\pi^2}}$&$\displaystyle{\frac{1}{20\pi^2}}$&$\displaystyle{\frac{1}{28\pi^2}}$&$\displaystyle{\frac{1}{24\pi^2}}$&$\displaystyle{\frac{1}{32\pi^2}}$&$\displaystyle{\frac{1}{48\pi^2}}$&$\ldots$\\
&&&&&&&&&&&&&\\
\hline
\end{tabular}\\\\

One can also compute the limit 
\be\lim_{N\to\infty}\frac{1}{N}\sum_{n=1}^N c_2(n)=\left(\frac{6}{\pi^2}\right)^2\approx 0.3695753612\label{average-c2}
\ee
by considering the series $\sum_{\mu^2(d)=1}\frak d(d^2)c_2(d^2)$ and using Proposition \ref{thm-density-level-sets-for-c2} and Lemma \ref{prop-formula-sigma-d}. We shall retrieve this fact from the more general result of Lemma \ref{lem-expo-sum-c_2}. Figure \ref{figurec2} summarizes the structure of the second correlation function.

\begin{figure}[h!]
\begin{center}
\includegraphics[width=17cm]{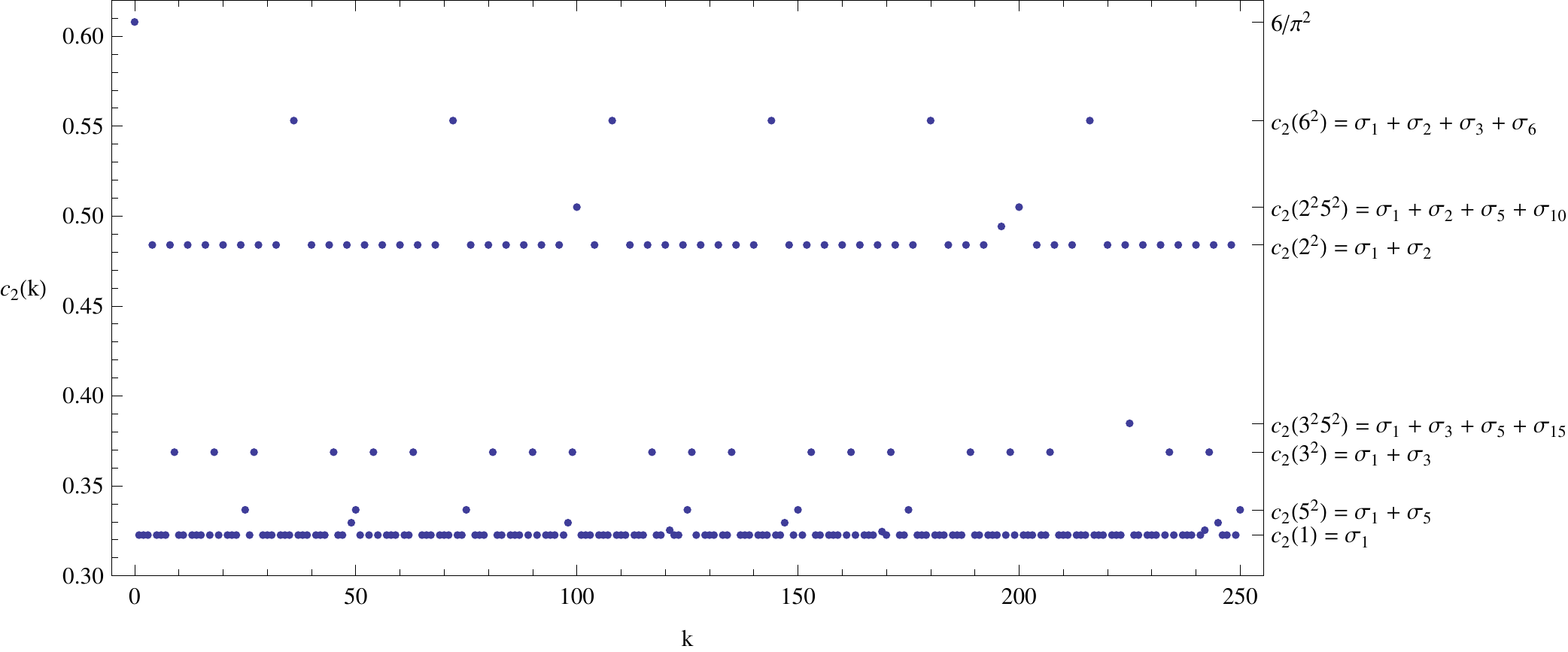}
\end{center}
\vspace{-.9cm}\caption{The second correlation function $c_2(k)$ and its level sets.}
\label{figurec2}
\end{figure}

Let us address the case of higher order correlation functions. 
\begin{prop}\label{prop-new-thm4&5}
Let $k_1,\ldots,k_r$ be such that all the differences 
$k_{l'}-k_{l''}$, $0\leq l'<l''\leq r
$ are square-free. Then 
\be c_{r+1}(k_1,k_2,\ldots,k_r)=\sum_{m_0,m_1,\ldots,m_r\geq0}(-1)^{\sum_{l=0}^r m_l}\sum_{\scriptsize{\ba{c}\mathcal P_l\subset \mathcal P,\:0\leq l\leq r\\ \mathcal P_{l'}\cap\mathcal P_{l''}=\varnothing\:\mbox{for}\:l'\neq l''\ea}}\frac{1}{\left[\bigcup_{l=0}^r \mathcal P_l\right]^2}\label{c_r+1-sinai}.\ee

For general $k_1,k_2,\ldots k_r$ we have 
\bey c_{r+1}(k_1,\ldots,k_r)=\sum_{0\leq l'<l''\leq r}\sum_{\scriptsize{\ba{c}\mu^2(d_{l',l''})=1\\ d_{l',l''}^2|k_{l'}-k_{l''}\ea}}\sum_{m_0,m_1,\ldots,m_r\geq 0}(-1)^{\sum_{l=0}^r m_l}\sum_{\scriptsize{\ba{c}\mathcal P_l,\: 0\leq l\leq r \\ |\mathcal P_l|=m_l\\\mbox{$[\mathcal P_{l'}\cap\mathcal P_{l''}]=d_{l',l''}$}\ea}}\frac{1}{\left[\bigcup_{l=0}^r \mathcal P_l\right]^2}.\label{general-c_r+1-sinai}\eey
\end{prop}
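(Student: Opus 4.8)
The plan is to derive both formulae from Mirsky's product formula (\ref{general-formula-c_r+1-Mirsky}) by expanding each factor, exactly as was done for $c_2(k)$ in the proof of Proposition \ref{new-thm2&3} and for $\sigma_d$ in Lemma \ref{prop-formula-sigma-d}. The key observation is that $A_p^{(r+1)}(k_1,\ldots,k_r)$ counts the distinct residues among $0, k_1, \ldots, k_r$ modulo $p^2$, and that a ``collision'' $k_{l'}\equiv k_{l''}\pmod{p^2}$ happens precisely when $p^2 \mid k_{l'}-k_{l''}$; so $A_p^{(r+1)}$ drops below $r+1$ only for the finitely many $p$ dividing one of the relevant differences.

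First I would treat the generic case (\ref{c_r+1-sinai}), i.e. when all differences $k_{l'}-k_{l''}$ are square-free. Then $A_p^{(r+1)}(k_1,\ldots,k_r)=r+1$ for every prime $p$, so Mirsky's formula reads $c_{r+1}=\prod_p\left(1-\frac{r+1}{p^2}\right)$. On the other side, I would show that the triple sum in (\ref{c_r+1-sinai}) equals this product. Fix the union $\mathcal U=\bigcup_l \mathcal P_l$; for a prime $p\in\mathcal U$, the number of ways it can be distributed among the $r+1$ pairwise-disjoint sets $\mathcal P_0,\ldots,\mathcal P_r$ with the prescribed constraint is $r+1$ (it lies in exactly one $\mathcal P_l$), and it contributes a sign $-1$. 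Grouping by $\mathcal U$ and expanding, $\sum_{\mathcal U}\prod_{p\in\mathcal U}\frac{-(r+1)}{p^2}=\prod_p\left(1-\frac{r+1}{p^2}\right)$, which matches. Care is needed here to set up the bijection between $(r+1)$-tuples $(\mathcal P_0,\ldots,\mathcal P_r)$ of pairwise-disjoint prime sets and pairs $(\mathcal U, \phi)$ where $\phi:\mathcal U\to\{0,\ldots,r\}$ is an arbitrary coloring, together with the absolute convergence needed to rearrange the multiple series; this follows from comparison with $\prod_p(1+(r+1)/p^2)<\infty$.

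Next I would handle the general case (\ref{general-c_r+1-sinai}). The right-hand side is organized by first choosing, for each pair $l'<l''$, a square-free $d_{l',l''}$ with $d_{l',l''}^2 \mid k_{l'}-k_{l''}$, which should be read as specifying, for each prime $p$, the \emph{pattern of collisions} among the residues of $0,k_1,\ldots,k_r$ modulo $p^2$. A prime $p$ contributes $d_{l',l''}$ for exactly those pairs whose indices get ``identified'' at $p$; consistency (the collisions form an equivalence relation on $\{0,\ldots,r\}$ refining as $p$ varies) is automatic because $p^2\mid a$ and $p^2\mid b$ force $p^2\mid a\pm b$. For a fixed prime $p$ and fixed collision pattern with, say, $a_p$ equivalence classes among the indices lying ``at $p$'', the inner count of configurations $(\mathcal P_0,\ldots,\mathcal P_r)$ with $[\mathcal P_{l'}\cap\mathcal P_{l''}]=d_{l',l''}$ contributes, after summing the geometric-type series over $m_0,\ldots,m_r$, a factor $\left(1-\frac{a_p}{p^2}\right)$ (times the $1/p^2$'s coming from the forced primes $p\mid d_{l',l''}$), and $a_p$ is exactly $A_p^{(r+1)}(k_1,\ldots,k_r)$. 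Multiplying over all $p$ reproduces Mirsky's product. Alternatively — and this is probably the cleaner write-up — one can avoid re-deriving everything from scratch by applying the already-proven generic formula (\ref{c_r+1-sinai}) to the ``reduced'' shifts: factor out the square parts, mimic the substitution $m=m_1-\omega(d)$, $M=m_1+m_2-2\omega(d)$ from the proof of Lemma \ref{prop-formula-sigma-d} to peel off the contribution of the forced primes $p^2\mid k_{l'}-k_{l''}$, and reduce to a situation where all remaining differences are square-free.

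The main obstacle is purely combinatorial bookkeeping: in the general case one must carefully justify that the outer sum over the collection $(d_{l',l''})_{l'<l''}$ is in bijection with the choice, prime by prime, of which blocks of indices coalesce mod $p^2$, and that this matches the combinatorial meaning of $A_p^{(r+1)}$; the constraint $[\mathcal P_{l'}\cap\mathcal P_{l''}]=d_{l',l''}$ couples the sets $\mathcal P_l$ in a way that must be disentangled prime by prime before the series factor as an Euler product. Everything else — the interchange of summation, the evaluation of $\sum_{M\ge 0}\sum_{|\mathcal P'|=M}(-1)^M(\cdots)$ as an Euler product, and the absolute convergence — is routine and parallels the $r=1$ arguments already carried out.
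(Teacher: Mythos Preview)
Your approach is correct and is essentially the paper's. For the square-free-differences case your ``colouring'' argument (each $p\in\mathcal U$ lands in exactly one of the $r+1$ blocks, contributing $-(r+1)/p^2$) is precisely the paper's multinomial computation
\[
\sum_{m\ge 0}(-1)^m\!\!\sum_{m_0+\cdots+m_r=m}\binom{m}{m_0,\ldots,m_r}\sum_{|\mathcal P'|=m}\frac{1}{[\mathcal P']^2}
=\sum_{m\ge0}\frac{(-(r+1))^m}{[\mathcal P']^2}=\prod_p\Bigl(1-\tfrac{r+1}{p^2}\Bigr).
\]
For the general case the paper gives no argument beyond ``analogously, one can check that the formula in the rhs of (\ref{general-c_r+1-sinai}) equals (\ref{general-formula-c_r+1-Mirsky})'', so your prime-by-prime factorisation actually supplies more detail than the paper does. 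One small correction to your sketch: for a \emph{fixed} tuple $(d_{l',l''})$ the local factor at $p$ is not $1-A_p/p^2$; that factor only emerges after summing over all admissible $(d_{l',l''})$. Equivalently, writing $S_p=\{l:p\in\mathcal P_l\}$, the constraint forces $S_p$ to lie inside a single residue class of $\{0,k_1,\ldots,k_r\}$ modulo $p^2$, and summing $(-1)^{|S_p|}/p^{2}$ over all nonempty such $S_p$ (plus the empty contribution $1$) gives $1+\sum_{i=1}^{A_p}\bigl((1-1)^{|C_i|}-1\bigr)/p^2=1-A_p/p^2$, recovering Mirsky's factor.
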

\begin{proof}
The case of $A_p^{(r+1)}(k_1,\ldots,k_r)=r+1$ corresponds to the case when $0, k_1, \ldots, k_r$ are distinct modulo $p^2$ for every prime $p$. This means that the differences $k_{l'}-k_{l''}$ are not divisible by $p^2$ for every prime $p$. In other words, the differences $k_{l'}-k_{l''}$ are all square-free. 
In this case, by writing $\mathcal P'=\cup_{l=0}^r\mathcal P_l$ and $m=m_0+\ldots+m_r$, the rhs of (\ref{c_r+1-sinai}) equals


\bey 
&&\sum_{m\geq 0}(-1)^m\frac{m!}{m_0!m_1!\cdots m_r!}\sum_{\scriptsize{\ba{c}\mathcal P'\subset\mathcal P\\ |\mathcal P|=m\ea}}\frac{1}{[\mathcal P']^2}
=\prod_{p}\left(1-\frac{r+1}{p^2}\right).\nonumber
\eey
Analogously, one can check that the formula in the rhs (\ref{general-c_r+1-sinai}) equals 
(\ref{general-formula-c_r+1-Mirsky}) with no restrictions on $k_1,\ldots,k_r$.
\end{proof}
\begin{remark}
Notice that the rhs of (\ref{c_r+1-sinai}) depends neither on $k_1,\ldots,k_r$ nor on the values of their differences as long as they all are square-free. Moreover, 
it is not enough to check that the  consecutive differences  $k_1-k_0,k_2-k_1,\ldots,k_r-k_{r-1}$ are square-free in order for all differences to be square-free.
For example, if $(k_1,k_2,k_3,k_4)=(1,6,7,10)$, all consecutive differences are square-free but $2^2|k_4-k_2$ and $3^2|k_4-k_1$. 

Notice also  that $c_{r+1}(k_1,\ldots, k_r)$ might be zero if $r\geq3$. For example, $c_4(1,2,3)=0$ since there is no $n$ such that $n,n+1,n+2,n+3$ are all square-free. All cases when $c_{r+1}(k_1,\ldots, k_r)=0$ correspond to constraints modulo $p^2$ for some prime $p$. This fact is clearly reflected by the general formula (\ref{general-formula-c_r+1-Mirsky}) for $c_{r+1}(k_1,\ldots,k_r)$. 
\end{remark}

Let us point out that the formul\ae\, (\ref{statement-thm3}, \ref{c_r+1-sinai}, \ref{general-c_r+1-sinai}) could be derived directly by inclusion-exclusion using arithmetic progressions with step $p^2$. That approach --as pointed out by the anonymous referee-- generates an error term that cannot be estimated elementarily. We therefore prefer the derivation of the formul\ae\, directly from Mirsky's.


%
In Section \ref{sec-averages-corr-fun} we shall use the following Lemma by 
R.R. Hall \cite{Hall-1989}. 
\begin{lem}
For every $0\leq k_1<k_2<\ldots<k_r$ we have 
\label{lem-Hall}
\be
c_{r+1}(k_1,\ldots,k_r)=\sum_{s_0\geq 1}\cdots\sum_{s_r\geq 1} g(s_0)\cdots g(s_r)\sum_{\scriptsize{\ba{c}0\leq t_j\leq s_j^2-1\\
\mu^2(\gcd(t_j,s_j^2))=1\\ 0\leq j\leq r\\ \frac{t_0}{s_0^2}+\frac{t_1}{s_1^2}+\ldots+\frac{t_r}{s_r^2}\in\Z\ea}}e\!\left(\frac{t_1}{s_1^2}k_1+\ldots+\frac{t_r}{s_r^2}k_r\right),\label{formula-Hall}
\ee
where
\be g(s)=\frac{6}{\pi^2}\mu(s)\prod_{p|s}\frac{1}{p^2-1}.\label{function-g-Hall}\ee
Moreover, the series in (\ref{formula-Hall}) converges absolutely.
\end{lem}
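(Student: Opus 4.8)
The plan is to derive Hall's formula directly from Mirsky's product formula (\ref{general-formula-c_r+1-Mirsky}), by expanding each local factor $1-A_p^{(r+1)}(\mathbf k)/p^2$ into a character sum modulo $p^2$, multiplying out the resulting Euler product, and then regrouping each coordinate according to the reduced denominator of the fraction that appears. Throughout I set $k_0=0$. The first step is the local identity
$$1-\frac{A_p^{(r+1)}(k_1,\dots,k_r)}{p^2}=\sum_{S\subseteq\{0,\dots,r\}}\frac{(-1)^{|S|}}{p^{2|S|}}\sum_{\substack{(t_j)_{j\in S}\in(\Z/p^2\Z)^S\\ \sum_{j\in S}t_j\equiv 0\ (p^2)}}e\!\left(\frac{1}{p^2}\sum_{j\in S}t_jk_j\right).$$
To prove it, write $\mathbf 1[\sum_{j\in S}t_j\equiv 0]=p^{-2}\sum_{c\bmod p^2}e(c\sum_{j\in S}t_j/p^2)$; this factorizes the inner sum over $(t_j)$ into complete geometric sums and collapses it to $p^{2(|S|-1)}\mathbf 1[k_j\equiv k_{j'}\,(p^2)\ \forall j,j'\in S]$ for $|S|\ge1$ (and to $1$ for $S=\varnothing$). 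Plugging back, the right-hand side becomes $1-p^{-2}\sum_{\varnothing\ne S}(-1)^{|S|+1}\mathbf 1[\text{all }k_j,\ j\in S,\text{ equal mod }p^2]$, and by inclusion--exclusion the $S$-sum equals $|\{0,k_1,\dots,k_r\bmod p^2\}|=A_p^{(r+1)}(\mathbf k)$.

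Each local factor is $1+O(p^{-2})$, since $\sum_{\varnothing\ne S}p^{-2|S|}\#\{(t_j):\sum t_j\equiv 0\}\le\sum_S p^{-2|S|}p^{2(|S|-1)}\le(2^{r+1}-1)p^{-2}$; hence $\prod_p(\text{local factor})$ converges absolutely and may be multiplied out term by term. A choice of a finite subset $S_p\subseteq\{0,\dots,r\}$ for each $p$ (almost all empty), together with residues $(t_{j,p})_{j\in S_p}$, corresponds by the Chinese Remainder Theorem to square-free integers $d_j=\prod_{p:\,j\in S_p}p$ and residues $t_j\bmod d_j^2$, the local constraints $p^2\mid\sum_{j\in S_p}t_{j,p}$ assembling into the single constraint $\sum_j t_j/d_j^2\in\Z$. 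Since $\prod_p(1-A_p^{(r+1)}/p^2)=c_{r+1}$ by Mirsky, this gives the absolutely convergent intermediate expansion
$$c_{r+1}(k_1,\dots,k_r)=\sum_{d_0,\dots,d_r\ge 1}\frac{\mu(d_0)\cdots\mu(d_r)}{d_0^2\cdots d_r^2}\sum_{\substack{0\le t_j<d_j^2\ (0\le j\le r)\\ \sum_j t_j/d_j^2\in\Z}}e\!\left(\frac{t_1}{d_1^2}k_1+\cdots+\frac{t_r}{d_r^2}k_r\right).$$

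The summand here depends on the $t_j/d_j^2$ only through their residues mod $1$, so it is enough to prove, for any function $F$ of $r+1$ such residues, the one-variable identity
$$\sum_{d\ge 1}\frac{\mu(d)}{d^2}\sum_{t\bmod d^2}F\!\left(\tfrac{t}{d^2}\right)=\sum_{s\ge 1}g(s)\sum_{\substack{u\bmod s^2\\ \mu^2(\gcd(u,s^2))=1}}F\!\left(\tfrac{u}{s^2}\right),$$
and then apply it successively in the coordinates $j=0,1,\dots,r$. For square-free $d$, partition the residues $t\bmod d^2$ by letting $s$ be the radical of the reduced denominator of $t/d^2$: checking $p$ by $p$ via CRT, one has $s\mid d$, and the residues with a given $s$ are in fraction-preserving bijection with $\{u\bmod s^2:\mu^2(\gcd(u,s^2))=1\}$ (both sets having $\prod_{p\mid s}(p^2-1)$ elements, the condition on $u$ being precisely that $p^2\nmid u$ for every $p\mid s$). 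Interchanging the order of summation and using $\sum_{d\text{ sq-free},\,s\mid d}\mu(d)/d^2=\frac{\mu(s)}{s^2}\prod_{p\nmid s}(1-p^{-2})=\frac{6}{\pi^2}\mu(s)\prod_{p\mid s}\frac{1}{p^2-1}=g(s)$ yields the identity, hence (\ref{formula-Hall}).

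It remains to prove absolute convergence of (\ref{formula-Hall}) as stated (the regrouping in the previous step introduces the cancellation producing $g(s)$, so this does not follow from the earlier convergence). For fixed $s_0,\dots,s_r$ the number of admissible tuples $(t_0,\dots,t_r)$ is at most $\prod_j s_j^2/\lcm(s_0^2,\dots,s_r^2)$, since $(t_j)\mapsto\sum_j t_j/s_j^2\bmod 1$ is a surjective homomorphism of $\prod_j\Z/s_j^2\Z$ onto $\tfrac{1}{\lcm}\Z/\Z$. As $g$ vanishes off the square-free integers, $\sum_{s_0,\dots,s_r}|g(s_0)\cdots g(s_r)|\,\prod_j s_j^2/\lcm(s_j^2)$ factors into an Euler product whose $p$-th factor equals $1+\tfrac{1}{p^2}\big(\big(\tfrac{2p^2-1}{p^2-1}\big)^{r+1}-1\big)=1+O(p^{-2})$, so the product converges. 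I expect the regrouping to be the main obstacle: one must pin down exactly which residues $t\bmod d^2$ reduce to a fraction with prescribed denominator-radical $s$, identify that set with $\{u\bmod s^2:\mu^2(\gcd(u,s^2))=1\}$, and verify that the Dirichlet-type sum over the multiples of $s$ collapses to Hall's weight $g(s)$; the remaining steps are routine character-sum and Euler-product estimates.
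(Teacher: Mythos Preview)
The paper does not prove this lemma at all: it is stated as ``the following Lemma by R.R. Hall \cite{Hall-1989}'' and used as a black box. So there is no paper-proof to compare against; what you have supplied is a self-contained derivation where the paper simply cites the literature.

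Your argument is correct. A few remarks on the points you flagged as potentially delicate:

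\emph{The local identity.} Your computation is fine; the inclusion--exclusion collapses exactly as you say because the nonempty $S$ that satisfy ``all $k_j$ equal mod $p^2$'' are precisely the nonempty subsets of the congruence classes, and $\sum_{\varnothing\ne S\subseteq A}(-1)^{|S|+1}=1$ for each class.

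\emph{The regrouping.} This is the one place where the phrasing ``apply it successively in the coordinates'' could mislead. The cleanest way to justify it is all at once: the intermediate series $\sum_{(d_j,t_j)}$ is absolutely convergent (indeed $\sum_{d_j\text{ sq-free}}\lcm(d_j)^{-2}=\prod_p(1+(2^{r+1}-1)p^{-2})<\infty$), so one may partition the index set by $(s_j,u_j)$ and sum over each fibre. On a fibre the exponential and the constraint depend only on $(s_j,u_j)$, and the remaining sum factors as $\prod_j\sum_{d_j\text{ sq-free},\,s_j\mid d_j}\mu(d_j)/d_j^2=\prod_j g(s_j)$, exactly your computation. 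The ``successive'' one-variable version also works but requires checking at each step that the partially regrouped series is still absolutely summable; the global version avoids this bookkeeping.

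\emph{The bijection.} Your CRT description is right: for square-free $d$ and $t\bmod d^2$, the radical of the reduced denominator of $t/d^2$ is $s=\prod_{p\mid d,\,t\not\equiv 0\,(p^2)}p$, and the fraction-preserving map $t\mapsto u$ with $u\equiv t\ (p^2)$ for $p\mid s$ lands exactly in $\{u\bmod s^2:\mu^2(\gcd(u,s^2))=1\}$.

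\emph{Absolute convergence of (\ref{formula-Hall}).} Your Euler-product bound with local factor $1+p^{-2}\bigl((\tfrac{2p^2-1}{p^2-1})^{r+1}-1\bigr)$ is correct and gives the claimed absolute convergence.

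So your proof is complete; the closing ``I expect the regrouping to be the main obstacle'' undersells what you have already done in the paragraph above it.
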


\subsection{Spectral analysis of $c_2$}
Let us expand slightly the discussion given in Section \ref{subsec-corr-fun}. We can rewrite (\ref{statement-thm3}) \be c_2(k)=\sum_{\mu^2(d)=1}K_d(k),\hspace{.5cm}\mbox{where}\hspace{.3cm} 
K_d(k):=\begin{cases}\sigma_d;&\mbox{if $d^2|k$};\\0,&\mbox{otherwise}.\end{cases}
\ee
The function $K_d$ is constant (equal to $\sigma_d$) along the arithmetic progression $\{ld^2:\:l\in\Z\}$ 
and 0 elsewhere. This function is the Fourier transform of a measure on the circle $\mathbb S^1$, given by a sum of $\delta$-functions at the points $e\!\left(l/d^2\right)$, $l=0,1,\ldots,d^2-1$, with equal weights $\sigma_d/d^2$. A corollary of this fact is the formula (\ref{spectral-measure-nu}) for the spectral measure $\nu$ on $\mathbb S^1$. 

\section{Averages of the correlation functions}\label{sec-averages-corr-fun}
This section is dedicated to the proof of some results generalizing (\ref{average-c2}). For instance, one can restrict the average to those integers belonging to a certain residue class modulo a square-free $d$ (Lemmata \ref{lem-sum-c_2(d^2l)}-
\ref{lem-sum-c_2(d^2l+t)-general}). These averages are then used in the analysis of an exponential sums of the form $\frac{1}{N}\sum_{n=1}^N \lambda^n c_2(n)$, 
where $\lambda$ is a complex number of modulo 1 (Lemma \ref{lem-expo-sum-c_2}) in the case when $\lambda\in\Lambda$. The latter case be further extended to multiple averages of higher-order correlation functions (Lemma \ref{lem-sums-new}). 
These exponential sums play a crucial role in the spectral analysis of the Koopman operator for the `natural' dynamical systems $(X,\mathcal B,\Pi,T)$ from Section \ref{natural-dyn-sys}, whose invariant measure is defined by means of the correlations $c_{r+1}(k_1,\ldots,k_r)$ (see Section \ref{sec-spectrum-of-T}). For example, given an eigenfunction $\theta_\lambda: X\to\C$  with eigenvalue $\lambda\in\Lambda$ for the Koopman operator, we will see that its correlation with the projection onto the $s$-th coordinate $x\to x(s)\in\{0,1\}$, i.e. the inner product $\langle x(s), \theta_\lambda\rangle_{L^2(X,\mathcal B, \Pi)}$, is given by $\lambda^s\lim_{N\to\infty}\frac{1}{N}\sum_{n=1}^N\lambda^n c_2(n)$, and we will use the the explicit form of this limit as function of $\lambda$ to study the set of all eigenfunctions $\{\theta_\lambda\}_{\lambda\in\Lambda}$.
%
%
%
The proofs of the first three lemmata are given in Appendix \ref{appendix-B}.
\begin{lem}\label{lem-sum-c_2(d^2l)}
Let $d$ be square-free. Then
\be\lim_{n\to\infty}\frac{1}{N}\sum_{l\leq N}c_2(d^2 l)=\left(\frac{6}{\pi^2}\right)^2\prod_{p\in\mathcal P(d)}\frac{p^2}{p^2-1}.\nonumber\ee
\end{lem}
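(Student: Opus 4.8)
The plan is to compute the Cesàro average of $c_2(d^2 l)$ over $l \leq N$ by reducing it to the level-set machinery already developed in Proposition \ref{thm-density-level-sets-for-c2} together with the explicit formula (\ref{formula-sigma_d}) for $\sigma_d$. The starting point is the identity $c_2(d^2 l) = \sum_{\mu^2(e)=1,\ e^2 \mid d^2 l} \sigma_e$ from Proposition \ref{new-thm2&3}. Thus
\be
\frac{1}{N}\sum_{l\leq N} c_2(d^2 l) = \sum_{\mu^2(e)=1} \sigma_e \cdot \frac{1}{N}\left|\left\{ l\leq N:\: e^2 \mid d^2 l \right\}\right|.
\ee
Writing $g = \gcd(e, d^\infty)$ (the largest divisor of $e$ composed only of primes dividing $d$) and $e = g e'$ with $\gcd(e', d)=1$, the condition $e^2 \mid d^2 l$ is equivalent to $(e')^2 \mid l$ together with a divisibility condition on $l$ coming from the $g$-part, which is automatically controlled since $g \mid d$. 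More precisely, $e^2 \mid d^2 l \iff (e/\gcd(e,d))^2 \mid (d/\gcd(e,d))^2 l \cdot (\text{correction})$; one checks that the density of such $l$ is exactly $\gcd(e,d)^2 / e^2$ when $e^2/\gcd(e,d)^2$ is the relevant modulus. So the inner density is $(\gcd(e,d)/e)^2$ and the limit equals $\sum_{\mu^2(e)=1} \sigma_e \gcd(e,d)^2 / e^2$.

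Next I would plug in $\sigma_e = \frac{1}{e^2}\prod_{p\nmid e}(1 - 2/p^2)$ from Lemma \ref{prop-formula-sigma-d}, equivalently $\sigma_e = \sigma_1 \prod_{p\mid e} \frac{1}{p^2-2}$, and evaluate the resulting multiplicative sum over square-free $e$:
\be
\sum_{\mu^2(e)=1} \sigma_1 \prod_{p\mid e}\frac{1}{p^2-2} \cdot \frac{\gcd(e,d)^2}{e^2} = \sigma_1 \prod_{p}\left(1 + \frac{1}{p^2-2}\cdot\frac{1}{p^2}\cdot[\text{$p\nmid d$ factor}] + \frac{1}{p^2-2}\cdot 1 \cdot[\text{$p\mid d$ factor}]\right),
\ee
splitting the Euler product according to whether $p \mid d$ or $p \nmid d$. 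For $p \nmid d$ the local factor is $1 + \frac{1}{p^2(p^2-2)} = \frac{p^2(p^2-2)+1}{p^2(p^2-2)} = \frac{(p^2-1)^2}{p^2(p^2-2)}$, and multiplying by $\sigma_1 = \prod_p (1-2/p^2) = \prod_p \frac{p^2-2}{p^2}$ one should recover the $(6/\pi^2)^2 = \prod_p (1-1/p^2)^2$ shape. For $p \mid d$ the local factor is $1 + \frac{1}{p^2-2} = \frac{p^2-1}{p^2-2}$, which after absorbing the corresponding $\sigma_1$-factor $\frac{p^2-2}{p^2}$ gives $\frac{p^2-1}{p^2}$; comparing with the desired answer, which carries an extra $\frac{p^2}{p^2-1}$ at each $p\mid d$, the product over $p\mid d$ should collapse to exactly that correction. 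Carrying out this bookkeeping yields the claimed value $\left(\frac{6}{\pi^2}\right)^2 \prod_{p\in\mathcal P(d)}\frac{p^2}{p^2-1}$.

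The main obstacle, and the step requiring genuine care rather than routine algebra, is the justification that one may interchange the limit $N\to\infty$ with the infinite sum over $e$ — i.e. controlling the tail uniformly in $N$. Since $\sigma_e = O(e^{-2})$ and the density estimate $\frac{1}{N}|\{l\leq N: e^2\mid d^2 l\}| \leq \gcd(e,d)^2/e^2 + O(1/N)$ holds with an error term whose implied constant depends on $e$, one needs a quantitative bound. This is exactly the kind of estimate supplied by Theorem \ref{thm1''} (with its explicit error constant) applied as in the proof of Proposition \ref{thm-density-level-sets-for-c2}: splitting $l = (e')^2 q$ with $q$ coprime to the relevant primes reduces the count to $\frac{1}{(e')^2}\mathbb{E}^{\mathcal S}_{N/(e')^2}(0)$-type quantities with controlled remainders, and summing the $O_{\mathcal S}(N^{-1/2})$ errors against the absolutely convergent weights $\sigma_e$ gives an overall $\varepsilon_N \to 0$. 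So the structure of the proof is: (1) apply Proposition \ref{new-thm2&3}; (2) identify the inner density and its error term via the counting argument behind Proposition \ref{thm-density-level-sets-for-c2} and Theorem \ref{thm1''}; (3) justify the interchange of limit and sum using absolute convergence of $\sum_e \sigma_e$ and the uniform error control; (4) evaluate the resulting Euler product.
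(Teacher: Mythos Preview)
Your approach is correct and genuinely different from the paper's. The paper does not use the additive decomposition $c_2(k)=\sum_{e^2\mid k}\sigma_e$ at all here; instead it writes each $l$ uniquely as
\[
l=\prod_{\bar p\in\bar{\mathcal P}}\bar p^{\,a(\bar p)}\cdot\prod_{p'\in\mathcal P'}(p')^{b(p')}\cdot q,
\]
with $\bar{\mathcal P}\subseteq\mathcal P(d)$, $\mathcal P'\subset\mathcal P\smallsetminus\mathcal P(d)$, all exponents $\geq 2$, and $q$ square-free avoiding $\bar{\mathcal P}\cup\mathcal P'$. Then it observes from Mirsky's product formula that $c_2(d^2 l)$ depends only on $\mathcal P(d)\cup\mathcal P'$, counts the $q$'s via Theorem~\ref{thm1''}, and collapses the resulting product. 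Your route via Proposition~\ref{new-thm2&3} and an Euler product over square-free $e$ is shorter and more transparent for this particular lemma; the paper's decomposition, on the other hand, is what gets recycled verbatim in the harder Lemmata~\ref{lem-sum_c_2(d^l+t)-square-free}--\ref{lem-sum-c_2(d^2l+t)-general}, where a nonzero shift $t$ destroys the clean divisor structure you exploit.

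Two small cleanups. First, the divisibility step is simpler than you wrote: for square-free $e$ and $d$, $e^2\mid d^2 l$ holds iff $(e')^2\mid l$ with $e'=e/\gcd(e,d)$, so $\tfrac{1}{N}|\{l\leq N:e^2\mid d^2 l\}|=\tfrac{1}{N}\lfloor N/(e')^2\rfloor$; no ``correction'' is needed. Second, you do not need Theorem~\ref{thm1''} to justify the interchange: since $0\leq \tfrac{1}{N}|\{l\leq N:e^2\mid d^2 l\}|\leq 1$ and $\sum_{\mu^2(e)=1}\sigma_e=6/\pi^2<\infty$ (computed right after Proposition~\ref{new-thm2&3}), dominated convergence applies immediately. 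Your Euler-product bookkeeping is correct: the local factor is $\tfrac{(p^2-1)^2}{p^2(p^2-2)}$ for $p\nmid d$ and $\tfrac{p^2-1}{p^2-2}$ for $p\mid d$, and multiplying by $\sigma_1=\prod_p\tfrac{p^2-2}{p^2}$ gives exactly $(6/\pi^2)^2\prod_{p\mid d}\tfrac{p^2}{p^2-1}$.
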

\begin{lem}\label{lem-sum_c_2(d^l+t)-square-free}
Let $d$ be square-free and let $1\leq t\leq d^2-1$, $\gcd(d^2,t)=g\geq 1$, where $g$ is square-free. Then
\bey
\lim_{n\to\infty}\frac{1}{N}\sum_{l\leq N}c_2(d^2 l+t)=\left(\frac{6}{\pi^2}\right)^2\prod_{p\in\mathcal P(d)}\frac{p^2(p^2-2)}{(p^2-1)^2}.\nonumber
\eey
\end{lem}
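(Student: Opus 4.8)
The plan is to mimic the strategy that (presumably) proves Lemma \ref{lem-sum-c_2(d^2l)}, namely to expand $c_2$ via Proposition \ref{new-thm2&3}, interchange summations, and reduce everything to densities computed in Section \ref{sec2}. Concretely, write
\be
\frac{1}{N}\sum_{l\leq N}c_2(d^2 l+t)=\frac{1}{N}\sum_{l\leq N}\sum_{\scriptsize{\ba{c}\mu^2(e)=1\\ e^2\mid d^2 l+t\ea}}\sigma_e
=\sum_{\mu^2(e)=1}\sigma_e\cdot\frac{1}{N}\left|\left\{l\leq N:\: e^2\mid d^2 l+t\right\}\right|.\nonumber
\ee
For fixed square-free $e$, the congruence $d^2 l\equiv -t\pmod{e^2}$ in the unknown $l$ is solvable if and only if $\gcd(d^2,e^2)=\gcd(d,e)^2$ divides $t$; since $g=\gcd(d^2,t)$ is square-free, this forces $\gcd(d,e)\mid g$ with $\gcd(d,e)$ square-free, i.e. $\gcd(d,e)\in\{1\}$ actually need not hold — rather $\gcd(d,e)$ must be a divisor of $g$, and $\gcd(d,e)^2\mid t$ combined with $g$ square-free gives $\gcd(d,e)=1$. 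Hence only $e$ coprime to $d$ contribute, and for those the solution set of $l$ modulo $e^2$ is a single residue class, so the inner density is $1/e^2+O(1/N)$. One must also justify interchanging the (infinite) sum over $e$ with $\lim_{N\to\infty}$: this follows because $\sigma_e\leq 6/\pi^2$ and $\sum_e \sigma_e/e^2<\infty$, giving a uniform-in-$N$ dominating tail (this is where Lemma \ref{prop-formula-sigma-d}'s bound $0<\sigma_e<6/\pi^2$ is used, exactly as the paper flags after that lemma).

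Granting the interchange, the limit becomes
\be
\sum_{\scriptsize{\ba{c}\mu^2(e)=1\\ \gcd(e,d)=1\ea}}\frac{\sigma_e}{e^2}
=\sum_{\scriptsize{\ba{c}\mu^2(e)=1\\ \gcd(e,d)=1\ea}}\frac{1}{e^4}\prod_{p\nmid e}\left(1-\frac{2}{p^2}\right)
=\prod_{p\nmid d}\left(1-\frac{2}{p^2}\right)\sum_{\scriptsize{\ba{c}\mu^2(e)=1\\ \gcd(e,d)=1\ea}}\frac{1}{e^4}\prod_{p\mid e}\left(1-\frac{2}{p^2}\right)^{-1},\nonumber
\ee
using $\sigma_e=e^{-2}\prod_{p\nmid e}(1-2/p^2)$ from Lemma \ref{prop-formula-sigma-d} and then factoring out $\prod_{p\nmid d}$, noting that the missing factors $\prod_{p\mid d}(1-2/p^2)$ do not appear because $\gcd(e,d)=1$. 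The remaining sum over $e$ is multiplicative and equals $\prod_{p\nmid d}\bigl(1+\tfrac{1}{p^4}(1-2/p^2)^{-1}\bigr)=\prod_{p\nmid d}\tfrac{p^6-2p^2+1}{p^4(p^2-2)}$. After recombining with the prefactor $\prod_{p\nmid d}\tfrac{p^2-2}{p^2}$, the $(p^2-2)$ factors cancel and one is left with $\prod_{p\nmid d}\tfrac{p^6-2p^2+1}{p^6}$; since $p^6-2p^2+1=(p^2-1)^2(p^2+1)$ — wait, check: $(p^2-1)^2(p^2+1)=(p^4-2p^2+1)(p^2+1)=p^6-p^4-2p^2+\dots$, so instead factor directly as $p^6-2p^2+1$, and compare with the target. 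The target $\left(\tfrac{6}{\pi^2}\right)^2\prod_{p\mid d}\tfrac{p^2(p^2-2)}{(p^2-1)^2}$ equals $\prod_p(1-1/p^2)^2\cdot\prod_{p\mid d}\tfrac{p^2(p^2-2)}{(p^2-1)^2}=\prod_{p\nmid d}(1-1/p^2)^2\cdot\prod_{p\mid d}(p^2-2)/p^2$, and one checks $\prod_{p\nmid d}\tfrac{p^6-2p^2+1}{p^6}=\prod_{p\nmid d}(1-1/p^2)^2(1+\text{correction})$; the clean identity to verify is $\tfrac{p^6-2p^2+1}{p^6}$ versus $(1-1/p^2)^2\cdot\big(\text{local factor at }p\big)$, which reconciles after also reintroducing the $p\mid d$ part of the original product $\prod_p(1-2/p^2)$ that was dropped.

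The main obstacle is not any single step but the bookkeeping of which Euler factors get divided out when one passes from $\prod_p$ to $\prod_{p\nmid d}$ and back; in particular one must be careful that the condition $\gcd(e,d)=1$ is genuinely forced (this uses $g=\gcd(d^2,t)$ square-free in an essential way — if $g$ were allowed a square factor, some $e$ sharing a prime with $d$ could contribute and the answer would change), and that the final product manipulation correctly accounts for the $p\mid d$ factors in $\sigma_1=\prod_p(1-2/p^2)$. I would carry out the prime-by-prime verification at the level of local factors, showing that for $p\mid d$ the local contribution to the limit is $\tfrac{p^2(p^2-2)}{(p^2-1)^2}\big/\big(1-\tfrac1{p^2}\big)^2\cdot\big(1-\tfrac1{p^2}\big)^2$ matches after cancellation — i.e. reduce the whole identity to the single-prime statement and check it by hand. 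The convergence/interchange argument and the solvability-of-congruence argument are routine given the tools already in the paper (Lemma \ref{prop-formula-sigma-d}, Theorem \ref{thm1''}), so the real content is the clean Euler-product identity at the end.
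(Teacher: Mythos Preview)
Your strategy is sound and is in fact cleaner than the paper's. The paper does \emph{not} expand $c_2$ via Proposition~\ref{new-thm2&3}; instead it writes $(d^2l+t)/g$ explicitly as $\prod_{p'\in\mathcal P'}(p')^{b(p')}\cdot q$ with $q$ square-free and $\mathcal P'$ disjoint from $\mathcal P(d)$, plugs Mirsky's product formula for $c_2$ into the sum, and then invokes the density result of Theorem~\ref{thm1''} for $\mathcal Q_N^{\mathcal P(d)\cup\mathcal P'}(0)$. Your route---expand $c_2=\sum_{e}\sigma_e\mathbf 1_{e^2\mid\,\cdot}$, interchange, and count residues---bypasses that combinatorial decomposition entirely, and the key arithmetic observation (that $g$ square-free forces $\gcd(d,e)=1$, so the only surviving $e$ give density exactly $1/e^2$) is correct.

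However, your Euler-product bookkeeping contains two concrete errors that derail the final computation. First, the sentence ``the missing factors $\prod_{p\mid d}(1-2/p^2)$ do not appear because $\gcd(e,d)=1$'' is backwards: precisely because $\gcd(e,d)=1$, every $p\mid d$ satisfies $p\nmid e$, so those factors \emph{do} appear in $\prod_{p\nmid e}(1-2/p^2)$. You must factor out the full $\sigma_1=\prod_p(1-2/p^2)$, not just $\prod_{p\nmid d}$. Second, the local Euler factor is
\[
1+\frac{1}{p^4}\left(1-\frac{2}{p^2}\right)^{-1}=1+\frac{1}{p^2(p^2-2)}=\frac{(p^2-1)^2}{p^2(p^2-2)},
\]
not $\tfrac{p^6-2p^2+1}{p^4(p^2-2)}$; this is why your attempted factorisation of ``$p^6-2p^2+1$'' failed. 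With both corrections the computation is immediate:
\[
\sum_{\substack{\mu^2(e)=1\\ \gcd(e,d)=1}}\frac{\sigma_e}{e^2}
=\sigma_1\prod_{p\nmid d}\frac{(p^2-1)^2}{p^2(p^2-2)}
=\prod_{p\mid d}\frac{p^2-2}{p^2}\cdot\prod_{p\nmid d}\frac{(p^2-1)^2}{p^4}
=\left(\frac{6}{\pi^2}\right)^2\prod_{p\mid d}\frac{p^2(p^2-2)}{(p^2-1)^2},
\]
as required. The interchange of limit and sum is justified exactly as you indicate, since only $e$ coprime to $d$ contribute and for those the count of $l\le N$ with $e^2\mid d^2l+t$ is $N/e^2+O(1)$, with $\sum_e\sigma_e/e^2<\infty$.
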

\begin{lem}\label{lem-sum-c_2(d^2l+t)-general}
Let $d$ be square-free, and let $1\leq t\leq d^2-1$, $\gcd(t,d^2)=g\geq 1$. Then
\bey
\lim_{n\to\infty}\frac{1}{N}\sum_{l\leq N}c_2(d^2 l+t)=\left(\frac{6}{\pi^2}\right)^2\prod_{p\in\mathcal P(d)}\frac{p^2(p^2-2)}{(p^2-1)^2}\prod_{p\in\mathcal P_2(g)}\frac{p^2-1}{p^2-2}.\nonumber
\eey
\end{lem}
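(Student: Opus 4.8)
The plan is to reduce Lemma \ref{lem-sum-c_2(d^2l+t)-general} to the same bookkeeping that appears in the proofs of Lemmata \ref{lem-sum-c_2(d^2l)} and \ref{lem-sum_c_2(d^l+t)-square-free}, the only new feature being that $g=\gcd(t,d^2)$ need no longer be square-free. First I would write $g = g_1 g_2^2 \cdots$ according to its factorization, or rather organize the primes $p\mid d$ into two classes: those in $\mathcal P_2(g)$ (i.e. $p^2\mid g$, equivalently $p^2\mid t$ since $p^2\mid d^2$) and those not. For a prime $p\mid d$ with $p^2\nmid t$, the residue class $d^2 l + t$ modulo $p^2$ behaves exactly as in Lemma \ref{lem-sum_c_2(d^l+t)-square-free}: the constraint is $p\nmid(d^2l+t)$ or not, and we pick up the local factor $\frac{p^2(p^2-2)}{(p^2-1)^2}$. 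For a prime $p\mid d$ with $p^2\mid t$, we have $d^2 l + t \equiv 0 \pmod{p^2}$ identically in $l$, so $p^2$ always divides the argument of $c_2$; this is the situation of Lemma \ref{lem-sum-c_2(d^2l)}, which contributes $\frac{p^2}{p^2-1}$, but now we must \emph{correct} the square-free-$g$ factor $\frac{p^2(p^2-2)}{(p^2-1)^2}$ up to it, and indeed $\frac{p^2}{p^2-1} = \frac{p^2(p^2-2)}{(p^2-1)^2}\cdot\frac{p^2-1}{p^2-2}$, which is precisely the extra product over $\mathcal P_2(g)$ in the statement.

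Concretely, I would follow the computational scheme already used in the proof of Proposition \ref{thm-density-level-sets-for-c2} and (implicitly) in Appendix \ref{appendix-B}. Using Proposition \ref{new-thm2&3}, expand $c_2(d^2 l + t) = \sum_{e^2 \mid d^2 l + t,\ \mu^2(e)=1} \sigma_e$ and exchange the finite-average with the sum over $e$; by Lemma \ref{prop-formula-sigma-d} each $\sigma_e$ is bounded, and the tail is controlled exactly as in the absolutely convergent estimates underlying Lemma \ref{lem-Hall}, so the interchange is legitimate. For each square-free $e$, the density of $\{l \le N : e^2 \mid d^2 l + t\}$ is computed by CRT prime-by-prime: writing $e = e' e''$ where $e' = \gcd(e,d)$ and $e''$ coprime to $d$, the condition $e''^2 \mid d^2 l + t$ puts $l$ in a single residue class mod $e''^2$ (density $1/e''^2$), while for $p \mid e'$ the condition $p^2 \mid d^2l+t$ is either automatic (when $p^2\mid t$, i.e. $p\in\mathcal P_2(g)$) or \emph{impossible} for $p\nmid$ the relevant part — so only divisors $e$ whose $\gcd$ with $d$ lies inside $[\mathcal P_2(g)]$ survive. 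Summing the resulting geometric-type series over $e$ (exactly as the $a_j\ge 2$ sums are summed in Proposition \ref{thm-density-level-sets-for-c2}) and collecting the Euler factors gives the claimed product, with the $\left(\frac{6}{\pi^2}\right)^2$ coming from the two independent copies of $1/\zeta(2)$ (one from $\sigma_1 = \prod_p(1-2/p^2)\cdot(\text{correction})$, one from the density count), just as in Lemma \ref{lem-sum-c_2(d^2l)}.

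An alternative, possibly cleaner, route is to deduce the general case from Lemma \ref{lem-sum_c_2(d^l+t)-square-free} by a change of variables that strips off the square part of $g$. If $\mathcal P_2(g) = \{q_1,\ldots,q_s\}$ and we write $t = (q_1\cdots q_s)^2\, t'$ with appropriate adjustments, then on the progression $d^2 l + t$ every $q_j^2$ divides the argument, so $c_2(d^2l+t) = c_2\bigl((q_1\cdots q_s)^2 (d'^2 l + t'')\bigr)$ after factoring, reducing to a lower-dimensional instance plus the explicit local factor $\prod_{p\in\mathcal P_2(g)}\frac{p^2-1}{p^2-2}$ from relating $\sigma_e$ on the sub-lattice. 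I expect the main obstacle to be purely notational: keeping the three classes of primes — $p\nmid d$, $p\mid d$ with $p^2\nmid t$, and $p\mid d$ with $p^2\mid t$ — cleanly separated through the Euler-product manipulation, and verifying that the interchange of the $l$-average with the (absolutely convergent) $\sigma_e$-sum is uniform in $N$; both are handled by the estimates already invoked for Lemmata \ref{lem-sum-c_2(d^2l)}-\ref{lem-sum_c_2(d^l+t)-square-free} and Lemma \ref{lem-Hall}, so no genuinely new analytic input is needed.
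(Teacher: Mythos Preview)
Your proposal is correct. Your ``alternative, cleaner route'' (stripping off the primes of $\mathcal P_2(g)$ by writing $d^2l+t=\bar p^2(d_1^2l+t_1)$ and reducing to the square-free-$g$ case) is precisely what the paper does in Appendix~\ref{appendix-B}: it treats the model case $g=\bar p^2$, factors out $\bar p^2$, parametrizes $d_1^2l+t_1$ by its square-full part as in Lemma~\ref{lem-sum_c_2(d^l+t)-square-free}, and invokes Theorem~\ref{thm1''} together with the averaging trick over $\varphi(d_1^2)$ residue classes.

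Your \emph{primary} route, however, is genuinely different and arguably more direct. Expanding $c_2(d^2l+t)=\sum_{e^2\mid d^2l+t,\ \mu^2(e)=1}\sigma_e$ via Proposition~\ref{new-thm2&3} and computing, prime-by-prime, the density of $\{l:e^2\mid d^2l+t\}$ avoids the somewhat heavy parametrization of integers by their square-full part and the appeal to Theorem~\ref{thm1''}; the whole computation collapses to a single Euler product. Two small comments: the interchange of the $l$-average with the $e$-sum needs nothing as strong as Lemma~\ref{lem-Hall} --- the series $\sum_{\mu^2(e)=1}\sigma_e=c_2(0)=6/\pi^2$ is absolutely convergent with positive terms, so dominated convergence is immediate; and your local analysis for $p\mid e'$ is exactly right (if $p\mid d$ then $d^2l+t\equiv t\pmod{p^2}$, so $p^2\mid d^2l+t$ either always, when $p\in\mathcal P_2(g)$, or never), which forces $e'\mid[\mathcal P_2(g)]$ and yields the claimed product after the Euler-factor bookkeeping. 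The paper's approach has the virtue of making all three lemmata look uniform; yours has the virtue of being shorter and of isolating cleanly where each factor $\frac{p^2-1}{p^2-2}$ for $p\in\mathcal P_2(g)$ comes from.
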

The following two lemmata deal with exponential sums involving the second and the third correlation functions. Recall the function $g$ from (\ref{function-g-Hall}).
\begin{lem}\label{lem-expo-sum-c_2}
Let $\lambda=e(\tfrac{l}{d^2})\in\Lambda$.
Then the limit
\be
\frak Y_2(\lambda)=\lim_{N\to\infty}\frac{1}{N}\sum_{n=1}^N \lambda^n 
c_2(n)
\nonumber\ee
exists and $\frak Y_2(\lambda)=g(d)^2$.
\end{lem}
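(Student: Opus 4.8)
The plan is to use the formula $c_2(n)=\sum_{\mu^2(d)=1}K_d(n)$, where $K_d(n)=\sigma_d$ when $d^2\mid n$ and $0$ otherwise, substitute it into the exponential average, and interchange the sum over $d$ with the limit. This interchange is justified by the uniform bound $0<\sigma_d<6/\pi^2$ together with the rapid decay coming from the factor $\sigma_d/d^2$ in the associated spectral weights; more precisely, since $\sum_d \sigma_d = 6/\pi^2 < \infty$, dominated convergence applies once we control each term. So the first step is to reduce to computing, for fixed square-free $d_0$,
\be
\lim_{N\to\infty}\frac{1}{N}\sum_{n=1}^N e\!\left(\frac{l n}{d^2}\right) K_{d_0}(n)
=\sigma_{d_0}\lim_{N\to\infty}\frac{1}{N}\sum_{\substack{n\leq N\\ d_0^2\mid n}} e\!\left(\frac{l n}{d^2}\right).
\ee

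The inner average is an elementary character-sum computation: writing $n=d_0^2 m$, it becomes $\frac{\sigma_{d_0}}{d_0^2}\lim_{M\to\infty}\frac{1}{M}\sum_{m\leq M} e\!\left(\frac{l d_0^2 m}{d^2}\right)$, which is $\frac{\sigma_{d_0}}{d_0^2}$ if $d^2\mid l d_0^2$ and $0$ otherwise. Since $\lambda=e(l/d^2)\in\Lambda$, the element is in lowest terms in the sense that $\gcd(l,d^2)$ is square-free; combined with $d$ square-free this forces the condition $d^2\mid l d_0^2$ to be equivalent to $d\mid d_0$. Hence the surviving terms are exactly those $d_0$ that are square-free multiples of $d$, and
\be
\frak Y_2(\lambda)=\sum_{\substack{\mu^2(d_0)=1\\ d\mid d_0}}\frac{\sigma_{d_0}}{d_0^2}.
\ee
Writing $d_0=d\cdot e$ with $e$ square-free and coprime to $d$, and using Lemma \ref{prop-formula-sigma-d} in the form $\sigma_{d_0}=\sigma_1\prod_{p\mid d_0}\frac{1}{p^2-2}$, this sum factors as an Euler product over primes, which I expect to collapse to $g(d)^2=\left(\frac{6}{\pi^2}\right)^2\mu(d)^2\prod_{p\mid d}\frac{1}{(p^2-1)^2}$ after elementary manipulation of the local factors; the identity $\sigma_1\prod_p\left(1+\frac{1}{p^2-2}\right)=\left(\frac{6}{\pi^2}\right)^2$-type relations (already implicit in the computation of $c_2(0)$ and in the remark following Proposition \ref{thm-density-level-sets-for-c2}) drive this.

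The main obstacle is not any single step but making the interchange of limit and infinite sum fully rigorous with a clean error term: one must show that the tail $\sum_{d_0 > D} \sigma_{d_0}\cdot\frac{1}{N}\left|\sum_{n\leq N, d_0^2\mid n} e(ln/d^2)\right|$ is uniformly small in $N$, which follows from $\frac{1}{N}|\sum_{n\leq N, d_0^2\mid n}1|\leq 1/d_0^2$ and $\sum_{d_0}\sigma_{d_0}/d_0^2<\infty$, so in fact this is routine. A secondary point requiring care is the number-theoretic reduction of the divisibility condition $d^2\mid l d_0^2$ to $d\mid d_0$ under the hypothesis $\lambda\in\Lambda$: one should check it prime by prime, noting that for $p\mid d$ the power of $p$ in $d^2$ is $2$, in $l$ is at most $1$ (since $\gcd(l,d^2)$ is square-free), so $p^2\mid l d_0^2$ forces $p\mid d_0$. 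Once these two points are dispatched, the remainder is the Euler-product evaluation, which is mechanical.
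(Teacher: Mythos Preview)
Your approach is correct and is genuinely different from the paper's. The paper partitions $n$ into residue classes modulo $d^2$, writing $n=d^2 m+t$ with $0\leq t\leq d^2-1$, and then invokes Lemmata \ref{lem-sum-c_2(d^2l)}--\ref{lem-sum-c_2(d^2l+t)-general} (whose proofs in Appendix~\ref{appendix-B} rely on the density result Theorem~\ref{thm1''}) to evaluate $\lim_N\frac{1}{N}\sum_{m}c_2(d^2m+t)$ for each $t$; it then assembles these via a cosine sum and an inclusion--exclusion over primes dividing $d$. Your route instead feeds the spectral decomposition $c_2(n)=\sum_{d_0}K_{d_0}(n)$ from Proposition~\ref{new-thm2&3} directly into the exponential average and interchanges, which bypasses Lemmata \ref{lem-sum-c_2(d^2l)}--\ref{lem-sum-c_2(d^2l+t)-general} entirely. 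The dominating bound $\bigl|\tfrac{1}{N}\sum_{n\leq N}\lambda^n K_{d_0}(n)\bigr|\leq \sigma_{d_0}/d_0^2$ and the convergence of $\sum_{d_0}\sigma_{d_0}/d_0^2$ make the interchange routine, and your prime-by-prime reduction of $d^2\mid l d_0^2$ to $d\mid d_0$ is exactly right under the hypothesis that $\gcd(l,d^2)$ is square-free. The resulting sum $\sum_{\mu^2(d_0)=1,\,d\mid d_0}\sigma_{d_0}/d_0^2$ does collapse to $g(d)^2$ via the Euler product; concretely, using $\sigma_{d_0}=\sigma_1\prod_{p\mid d_0}(p^2-2)^{-1}$ and $\sigma_1=\prod_p(p^2-2)/p^2$, the local factor at $p\nmid d$ is $(p^2-1)^2/p^4$ and at $p\mid d$ is $1/p^4$, giving $\bigl(\prod_p(1-p^{-2})\bigr)^2\prod_{p\mid d}(p^2-1)^{-2}$. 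Your argument is shorter and more transparent; the paper's version has the advantage that the intermediate Lemmata \ref{lem-sum-c_2(d^2l)}--\ref{lem-sum-c_2(d^2l+t)-general} have independent interest as averages of $c_2$ along arithmetic progressions.
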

\begin{proof}
We can write $n=d^2l+t$ for some $l\geq 0$ and $0\leq t\leq d^2-1$ and set
\be I_N(\lambda 
)=\frac{1}{N}\sum_{n\leq N}\lambda^n 
c_2(n)=\sum_{t=0}^{d^2-1}I_N^{(t)}(l,d^2),\label{pf-innerproduc-b}\ee
where 
\be I_N^{(t)}(\lambda 
)=\frac{1}{N}\sum_{m\leq (N-t)/d^2}e\left(\frac{lt}{d^2}\right)
c_2(d^2 m+t).\nonumber\ee

Lemma \ref{lem-sum-c_2(d^2l)} gives
\be\lim_{N\to\infty}I_N^{(0)}(\lambda
)=\frac{1}{d^2} \left(\frac{6}{\pi^2}\right)^2\prod_{p|d}\frac{p^2}{p^2-1}.\label{asymptotic-I_N^(0)(d)}\ee
For $t\neq 0$, the value of $\lim_{N\to\infty} I_N^{(t)}(\lambda
)$ is given by Lemmata \ref{lem-sum_c_2(d^l+t)-square-free}-\ref{lem-sum-c_2(d^2l+t)-general}. 
It depends only on $\mathcal P_2(g)$, where $g=\gcd(t,d^2)$. More explicitly,
\be \lim_{N\to\infty}I_N^{(t)}(\lambda 
)=e\!\left(\frac{l t}{d^2}\right)
\frac{1}{d^2}\left(\frac{6}{\pi^2}\right)^2\prod_{p|d}\frac{p^2(p^2-2)}{(p^2-1)^2}\prod_{p^2|g}\frac{p^2-1}{p^2-2}\label{asymptotic-I_N^(t)(d)}.\ee
Let us introduce the notation
\be\tau_t(d^2)=\prod_{p^2|\gcd(t,d^2)}\frac{p^2-1}{p^2-2}.\nonumber\ee
Notice that 
$\tau_{t}(d^2)=\tau_{d^2-t}(d^2)$ and therefore the limit $\lim_{N\to\infty}I_N(\lambda
)$ is real.
Using (\ref{asymptotic-I_N^(0)(d)}) and (\ref{asymptotic-I_N^(t)(d)}) we can write 
\be\lim_{N\to\infty}I_N(\lambda 
)=\left(\frac{6}{\pi^2}\right)^2\prod_{p|d}\frac{1}{p^2-1}\left(1+\prod_{p|d}\frac{p^2-2}{p^2-1}\sum_{t=1}^{d^2-1}\cos\left(\frac{2\pi l t}{d^2}\right)\tau_t(d^2)\right)\label{value-lim-I_N(d)}\ee
and, if $\omega(d)=|\mathcal P(d)|=r$, then 
\bey
&&\sum_{t=1}^{d^2-1}\cos\!\left(\frac{2\pi l t}{d^2}\right)\tau_t(d^2)=
\sum_{\scriptsize{\ba{c}t\leq d^2-1
\ea}}\cos\!\left(\frac{2\pi l t}{d^2}\right)-\sum_{p_1|d}\sum_{\scriptsize{\ba{c}t\leq d^2-1\\p_1^2|t\ea}}\cos\!\left(\frac{2\pi l t}{d^2}\right)\left(1-\frac{p_1^2-1}{p_1^2-2}\right)+\nonumber\\
&&+\sum_{p_1,p_2|d}\sum_{\scriptsize{\ba{c}t\leq d^2-1\\p_1^2p_2^2|t\ea}}\cos\!\left(\frac{2\pi l t}{d^2}\right)\left(1-\frac{p_1^2-1}{p_1^2-2}\right)\left(1-\frac{p_2^2-1}{p_2^2-2}\right)-\nonumber\\
&&-\sum_{p_1,p_2,p_3|d}\sum_{\scriptsize{\ba{c}t\leq d^2-1\\p_1^2p_2^2p^3|t\ea}}\cos\!\left(\frac{2\pi l t}{d^2}\right)\left(1-\frac{p_1^2-1}{p_1^2-2}\right)\left(1-\frac{p_2^2-1}{p_2^2-2}\right)\left(1-\frac{p_3^2-1}{p_3^2-2}\right)+\ldots+\nonumber\\
&&+(-1)^{r-1}\sum_{p_1,\ldots,p_{r-1}|d}\sum_{\scriptsize{\ba{c}t\leq d^2-1\\p_1^2\cdots p_{r-1}^2|t\ea}}\cos\!\left(\frac{2\pi l t}{d^2}\right)\left(1-\frac{p_1^2-1}{p_1^2-2}\right)\cdots\left(1-\frac{p_{r-1}^2-1}{p_{r-1}^2-2}\right)\label{computation-sum-cosines}.
\eey
Recall the assumption that $\gcd(l,d^2)$ is square-free, and notice that for every $p_1,\ldots,p_m|d$, $m<r$, 
\be
\sum_{\scriptsize{\ba{c}t\leq d^2-1\\p_1^2\cdots p_{m}^2|t\ea}}\cos\!\left(\frac{2\pi l t}{d^2}\right)=-1\nonumber. 
\ee
Now (\ref{computation-sum-cosines}) yields 
\bey
&&\sum_{t=1}^{d^2-1}\cos\!\left(\frac{2\pi l t}{d^2}\right)\tau_t(d^2)=(-1)\left(1-\sum_{p_1|d}\frac{-1}{p_1^2-2}+\sum_{p_1,p_2|d}\frac{-1}{p_1^2-2}\frac{-1}{p_2^2-2}-\right.\nonumber\\
&&\left.-\sum_{p_1,p_2,p_3|d}\frac{-1}{p_1^2-2}\frac{-1}{p_2^2-2}\frac{-1}{p_3^2-2}+\ldots+(-1)^{r-1}\sum_{p_1,\ldots,p_{r-1}|d}\frac{-1}{p_1^2-2}\cdots\frac{-1}{p_{r-1}^2-2}\right)=\nonumber\\
&&=
-\left(\prod_{p|d}\left(1-\frac{-1}{p^2-2}\right)-(-1)^r\prod_{p|d}\frac{-1}{p^2-2} \right)=\prod_{p|d}\frac{1}{p^2-2}-\prod_{p|d}\frac{p^2-1}{p^2-2},
\nonumber
\eey
and (\ref{value-lim-I_N(d)}) becomes
\be\lim_{N\to\infty}I_N(\lambda 
)=\left(\frac{6}{\pi^2}\right)^2\prod_{p|d}\frac{1}{p^2-1}\left(1+\prod_{p|d}\frac{1}{p^2-1}-1\right)=\left(\frac{6}{\pi^2}\right)^2\prod_{p|d}\frac{1}{(p^2-1)^2}=g(d)^2.\nonumber 
\ee
\end{proof}

\begin{remark}\label{rk-bigO}
Since $\frac{3}{4} p^2\leq p^2-1\leq p^2$ for every $p$, then 
\be\frac{1}{d^2}\leq \prod_{p|d}\frac{1}{p^2-1}\leq \left(\frac{4}{3}\right)^{\omega(d)}\frac{1}{d^2}.\ee
Since $d$ is square-free, if we want to give an upper bound for $\omega(d)$ in terms of $d$ as $d\to\infty$, it is enough to consider the case when $d$ is the product of the first $r$ prime numbers: $d=p_1p_2\cdots p_r$. In this case $\omega(d)=r$. It is known that $\log d=r\log r(1+o(1))$ as $r\to\infty$. This means that in general $\omega(d)\log \omega(d)\leq (1+\varepsilon_1)\log d$ for every $\varepsilon_1>0$, provided that $d\gg 1$. This implies $\omega(d)\leq\frac{\log d^{1+\varepsilon}}{W(\log d^{1+\varepsilon_1})}$, where $W$ denotes the Lambert function, i.e. the solution of the equation $x=W(x)e^{W(x)}$. It is known that $W(x)\sim \log x$ as $x\to\infty$. Therefore $\omega(d)\leq \frac{\log d^{1+\varepsilon_1}}{\log(\log d^{1+\varepsilon_1})^{1-\varepsilon_2}}$ for every $\varepsilon_2>0$ and thus
\be \left(\frac{4}{3}\right)^{\omega(d)}=\mathcal{O}_\varepsilon(d^\varepsilon)\ee for every $\varepsilon>0$ as $d\to\infty$. In other words, formul\ae (\ref{asymptotic-I_N^(0)(d)}-\ref{asymptotic-I_N^(t)(d)}) give
\be\lim_{N\to\infty}I_N^{(t)}(l,d^2)=\mathcal{O}_\varepsilon\!\left(\frac{1}{d^{2-\varepsilon}}\right)\hspace{.5cm}\mbox{as $d\to\infty$},\nonumber\ee
for every $t=0,1,\ldots, d^2-1$.
However, 
the cancellations coming from the different exponential factors $e^{\frac{2\pi i t}{d^2}}$ in $I_N(d)$ are responsible for the higher order of smallness shown in (\ref{value-lim-I_N(d)}): \be\lim_{N\to\infty}I_N(l,d^2)=\mathcal O\!\left(\frac{1}{d^{4-\varepsilon}}\right)\hspace{.5cm}\mbox{as $d\to\infty$}.\nonumber\ee
\end{remark}

\begin{lem}\label{lem-sums-new}
Let $\lambda_1,\lambda_2\in\Lambda$, $\lambda_1=e\!\left(\frac{l_1}{d_1^2}\right), \lambda_2=e\!\left(\frac{l_2}{d_2^2}\right)$, and $\lambda=\lambda_1\lambda_2
=e\!\left(\frac{l}{d^2}\right)
\in\Lambda$.
Then the 2-fold limit 
\bey
\frak Y_3(\lambda_1,\lambda_2)=
\lim_{\scriptsize{\ba{c}N_1\to\infty\\ N_2\to\infty
\ea}}\frac{1}{N_1N_2}\sum_{n_1=1}^{N_1}\sum_{n_2=1}^{N_2}\lambda_1^{n_2}\lambda_2^{n_2}
c_{3}(n_1,n_2)
\nonumber\\
\eey
exists and $\frak Y_3(\lambda_1,\lambda_2)=g(d_1)g(d_2)g(d)$.
\end{lem}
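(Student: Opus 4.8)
\emph{Proof proposal.} The plan is to expand $c_3(n_1,n_2)$ by Hall's formula (Lemma~\ref{lem-Hall} with $r=2$) and to show that, after averaging against $\lambda_1^{n_1}\lambda_2^{n_2}$, exactly one term of the resulting series survives. Since both sides of (\ref{formula-Hall}) are symmetric in $k_1,\ldots,k_r$ (after relabeling the dummy variables $s_j,t_j$), that identity holds for all distinct arguments, so for $n_1\neq n_2$ we may substitute it with $(k_1,k_2)=(n_1,n_2)$; and since $|c_3|\leq1$, the diagonal $n_1=n_2$ contributes only $O\!\big(\min(N_1,N_2)/(N_1N_2)\big)=o(1)$ to the average. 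Using $\lambda_1^{n_1}\lambda_2^{n_2}=e\!\big(\tfrac{l_1}{d_1^2}n_1\big)e\!\big(\tfrac{l_2}{d_2^2}n_2\big)$ and the elementary fact that $\tfrac1{N_1N_2}\sum_{\substack{n_1\neq n_2\\ n_1\leq N_1,\,n_2\leq N_2}}e(\alpha_1 n_1+\alpha_2 n_2)\to\mathbf{1}_{\Z}(\alpha_1)\mathbf{1}_{\Z}(\alpha_2)$, each term of the Hall expansion contributes, in the limit, the value $\mathbf{1}_{\Z}\big(\tfrac{l_1}{d_1^2}+\tfrac{t_1}{s_1^2}\big)\,\mathbf{1}_{\Z}\big(\tfrac{l_2}{d_2^2}+\tfrac{t_2}{s_2^2}\big)$. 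Since Lemma~\ref{lem-Hall} guarantees $\sum_{s_0,s_1,s_2}\sum_{t_0,t_1,t_2}|g(s_0)g(s_1)g(s_2)|<\infty$, this dominates the series term-by-term and dominated convergence yields that the double limit exists and equals
\[
\frak Y_3(\lambda_1,\lambda_2)=\sum_{s_0,s_1,s_2\geq1}g(s_0)g(s_1)g(s_2)\sum_{\substack{0\leq t_j\leq s_j^2-1,\ 0\leq j\leq2\\ \mu^2(\gcd(t_j,s_j^2))=1\\ t_0/s_0^2+t_1/s_1^2+t_2/s_2^2\in\Z\\ l_1/d_1^2+t_1/s_1^2\in\Z,\ \ l_2/d_2^2+t_2/s_2^2\in\Z}}1 .
\]

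Next I would verify that the inner sum has exactly one admissible tuple. Since $g$ vanishes off $\mathcal Q$ by (\ref{function-g-Hall}), we may take $s_0,s_1,s_2\in\mathcal Q$; then the condition $\mu^2(\gcd(t_j,s_j^2))=1$ says precisely that $e(t_j/s_j^2)\in\Lambda$. The constraint $\tfrac{l_1}{d_1^2}+\tfrac{t_1}{s_1^2}\in\Z$ now reads $e(t_1/s_1^2)=\overline{\lambda_1}\in\Lambda$, so by the uniqueness of the representation of elements of $\Lambda$ (Section~\ref{subsec-corr-fun}) we get $s_1=d_1$ and $t_1=(d_1^2-l_1)\bmod d_1^2$; likewise $s_2=d_2$ and $t_2=(d_2^2-l_2)\bmod d_2^2$. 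Substituting these and using $\lambda_1\lambda_2=\lambda=e(l/d^2)$, i.e. $\tfrac{l_1}{d_1^2}+\tfrac{l_2}{d_2^2}\equiv\tfrac{l}{d^2}\pmod1$, the last constraint collapses to $\tfrac{t_0}{s_0^2}\equiv\tfrac{l}{d^2}\pmod1$, i.e. $e(t_0/s_0^2)=\lambda$; by uniqueness again, $s_0=d$ and $t_0=l$. This tuple does satisfy all the conditions — in particular $\gcd(t_0,s_0^2)=\gcd(l,d^2)$ is square-free since $\lambda\in\Lambda$ — so it is the unique surviving term and contributes $g(d)g(d_1)g(d_2)$. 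Hence $\frak Y_3(\lambda_1,\lambda_2)=g(d_1)g(d_2)g(d)$.

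The geometric-sum asymptotics and the dominated-convergence bookkeeping (including the negligible diagonal) are routine; the one step requiring care — and the only place where all three hypotheses $\lambda_1,\lambda_2,\lambda\in\Lambda$ are genuinely used — is the identification $s_1=d_1$, $s_2=d_2$, $s_0=d$, which rests on the uniqueness of $\Lambda$-representations and hence on the relevant moduli and numerators being square-free. As a consistency check, the identical computation with $r=1$ recovers $\frak Y_2(\lambda)=g(d)^2$ (Lemma~\ref{lem-expo-sum-c_2}), and the argument extends \emph{mutatis mutandis} to the $r$-fold Cesàro average of $\lambda_1^{n_1}\cdots\lambda_r^{n_r}\,c_{r+1}(n_1,\ldots,n_r)$, giving $g(d)\prod_{j=1}^{r}g(d_j)$ with $\prod_{j}\lambda_j=e(l/d^2)\in\Lambda$.
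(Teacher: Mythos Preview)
Your proof is correct and follows essentially the same route as the paper: expand $c_3$ via Hall's formula, exchange the limit with the absolutely convergent series, and use the orthogonality of characters to isolate the unique term $(s_0,s_1,s_2)=(d,d_1,d_2)$. Your presentation is in fact a little more careful than the paper's in two respects --- you explicitly dispose of the diagonal $n_1=n_2$ (Hall's lemma is stated for strictly ordered arguments) and you phrase the identification of $(s_j,t_j)$ via the uniqueness of $\Lambda$-representations rather than a case analysis on $\lambda_j=1$ versus $\lambda_j\neq1$ --- but these are cosmetic improvements on what is otherwise the same argument.
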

\begin{proof}
Using Lemma \ref{lem-Hall} we can write
\bey
\frak Y_{3}(\lambda_1,\lambda_2)&=&
\lim_{\scriptsize{\ba{c}N_1\to\infty\\N_2\to\infty\ea}}\frac{1}{N_1N_2}\sum_{n_1=1}^{N_1}\sum_{n_2=1}^{N_2}\lambda_1^{n_1}\lambda_2^{n_2} 
\sum_{s_0\geq1}\sum_{s_1\geq 1}\sum_{s_2\geq 1}g(s_0)g(s_1)g(s_2)\nonumber\\
&&
\sum_{\scriptsize{\ba{c}0\leq t_j\leq s_j^2-1\\
\mu^2(\gcd(t_j,s_j^2))=1\\
j=0,1,2\\
\frac{t_0}{s_0^2}+\frac{t_1}{s_1^2}+\frac{t_2}{s_2^2}\in\Z\ea}}e\!\left(\frac{t_1}{s_1^2}n_1+\frac{t_2}{s_2^2}n_2\right).\label{pf-lemma-prod-p1-p2-1}
\eey
Let us bring the limit and the sums over $n_1, n_2$ in (\ref{pf-lemma-prod-p1-p2-1}) inside the sum over $t_0,t_1,t_2$.
For fixed $s_0,s_1,s_2$ we have
\bey
\sum_{\scriptsize{\ba{c}0\leq t_j\leq s_j^2-1\\
\mu^2(\gcd(t_j,s_j^2))=1\\
j=0,1,2\\
\frac{t_0}{s_0^2}+\frac{t_1}{s_1^2}+\frac{t_2}{s_2^2}\in\Z\ea}}
\left(\lim_{N_1\to\infty}\frac{1}{N_1}\sum_{n_1=1}^{N_1} e\!\left(\left(\frac{t_1}{s_1^2}+\frac{l_1}{d_1^2}\right)n_1\right) \right)
\left(\lim_{N_2\to\infty}\sum_{n_2=1}^{N_2}
e\!\left(\left(\frac{t_2}{s_2^2}n_2+\frac{l_2}{d_2^2}\right)n_2\right)\right)\nonumber 
\eey
The two sums over $n_1,n_2$ can be written as
\be
\frac{1}{N_j}\sum_{n_j=1}^{N_j}e\!\left(\left(\frac{t_j}{s_j^2}+\frac{l_j}{d_j^2}\right)n_j\right)=\begin{cases}
\displaystyle{\frac{1}{N_j}\frac{\xi_j-\xi_j^{N+1}}{1-\xi_j}},&\mbox{if $\xi_j\neq 1$}\\ &\\
\displaystyle{1},&\mbox{if $\xi_j=1$,}
\end{cases}\nonumber
\ee
where $\xi_j= e\!\left(\frac{t_j}{s_j^2}+\frac{l_j}{d_j^2}\right)$ and $j=1,2$. Thus, as $N\to\infty$, only the indices $t_0,t_1,t_2$ such that $\xi_1=\xi_2=1$ give a non-zero contribution to 
(\ref{pf-lemma-prod-p1-p2-1}). This condition means
\bey
\frac{t_j}{s_j^2}+\frac{l_j}{d_j^2}=\begin{cases}1,&\mbox{if $\lambda_j\neq 1$;}\\ 0,&\mbox{if $\lambda_j=1$;}\end{cases}\nonumber
\eey
for $j=1,2$. However, 
because of the conditions $0\leq t_j\leq s_j^2-1$ and $\mu^2(\gcd(t_j,s_j^2))=1$, the index 
\be t_j=\begin{cases}\frac{s_j^2}{d_j^2}\left(d_j^2-l_j\right),&\mbox{if $\lambda_j\neq1$}\\ 0,&\mbox{if $\lambda_j=1$}\end{cases}\nonumber\ee 
will be considered only when $s_j=d_j$. The value of  $t_0$ is given consequently by
\be \frac{t_0}{s_0^2}=\begin{cases}\frac{l}{d^2}, &\mbox{if $\lambda_1\neq 1\neq \lambda_2$;}\\ \frac{l_1}{d_1^2}.&\mbox{if $\lambda_1\neq1=\lambda_2$}\\ \frac{l_2}{d_2^2},&\mbox{if $\lambda_1=1\neq\lambda_2$}\\0, &\mbox{if $\lambda_1=1=\lambda_2$}. \end{cases}
\nonumber\ee
In all cases this means $\frac{t_0}{s_0^2}=\frac{l}{d^2}$, and 
%
the condition $\mu^2(\gcd(t_0,s_0^2))=1$ implies that $s_0=d$ and $t_0=l$. In other words, (\ref{pf-lemma-prod-p1-p2-1}) becomes $\frak Y_3(\lambda_1,\lambda_2)=g(d_1)g(d_2)g(d)$, and the lemma is proven.
\end{proof}
\begin{remark}
Notice that if $\lambda_1=e\!\left(\frac{l_1}{d_1^2}\right),\lambda_2=e\!\left(\frac{l_2}{d_2^2}\right)\in\Lambda$ satisfy $\gcd(d_1,d_2)=1$, then  
\be\frak Y_3\!\left(\lambda_1,\lambda_2 
\right)=
\left(\frac{6}{\pi^2}\right)^3(-1)^{\omega(d_1d_2)}\prod_{p|d_1d_2}\frac{1}{(p^2-1)^2}.
\nonumber\ee
\end{remark}
The product $\prod_{p|d}(p^2-1)$ appears in several formul\ae\, above. Concerning this product, we have the following basic
\begin{lem}\label{lem-prod-p-set-of-l} Let $d$ be square-free. Then
\be\prod_{p|d}(p^2-1)=\left|\left\{1\leq l\leq d^2-1,\:\mu^2(\gcd(l,d^2))=1\right\}\right|\label{prod-p-set-of-l}\ee
\end{lem}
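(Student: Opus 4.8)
The plan is to count residue classes modulo $d^2$ via the Chinese Remainder Theorem, using that $d$ is square-free. Write $\mathcal P(d)=\{p_1,\dots,p_m\}$, so that $d^2=p_1^2\cdots p_m^2$ is a product of distinct prime squares. The first step is the elementary identity $\gcd(l,d^2)=\prod_{j=1}^m p_j^{\min(v_{p_j}(l),\,2)}$, where $v_p$ denotes the $p$-adic valuation; consequently $\mu^2(\gcd(l,d^2))=1$ holds if and only if $v_{p_j}(l)\le 1$, i.e. $p_j^2\nmid l$, for every $j=1,\dots,m$.

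Next I would translate this into a statement about residues. The isomorphism $\Z/d^2\Z\cong\prod_{j=1}^m\Z/p_j^2\Z$ given by $l\mapsto(l\bmod p_1^2,\dots,l\bmod p_m^2)$ carries the condition ``$p_j^2\nmid l$ for all $j$'' to ``every coordinate is nonzero''. Hence the number of classes $l$ in a complete residue system modulo $d^2$ satisfying $\mu^2(\gcd(l,d^2))=1$ equals $\prod_{j=1}^m(p_j^2-1)=\prod_{p\mid d}(p^2-1)$. The same count can also be produced by inclusion-exclusion over the events $\{p_j^2\mid l\}$, which yields $d^2\prod_{p\mid d}(1-p^{-2})=\prod_{p\mid d}(p^2-1)$; I would present the CRT version since it is the shortest.

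It then only remains to match this with the range $1\le l\le d^2-1$ appearing in \eqref{prod-p-set-of-l}: the residue $l=0$ has $\gcd(0,d^2)=d^2$, which is square-free exactly when $d=1$, so for $d>1$ the class $l=0$ is automatically discarded by the condition and counting over $\{1,\dots,d^2-1\}$ gives the same value as counting over a full system of residues (for $d=1$ both sides are to be understood as equal to $1$, counting the single class $l=0$). There is essentially no obstacle here — the lemma is a short bookkeeping argument — and the only points needing a little care are the valuation description of $\gcd(l,d^2)$ and the harmless boundary value $l=0$.
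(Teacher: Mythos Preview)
Your argument is correct. The paper's own proof is precisely the inclusion--exclusion computation you mention as an alternative, writing the count as $d^2-\sum_{p_1\mid d}\tfrac{d^2}{p_1^2}+\cdots+(-1)^{\omega(d)}=d^2\prod_{p\mid d}(1-p^{-2})$, so your CRT version and the paper's approach are two phrasings of the same elementary count. Your remark about the boundary case $d=1$ is a fair nitpick that the paper simply ignores.
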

\begin{proof}
By standard inclusion-exclusion we can write the rhs of (\ref{prod-p-set-of-l}) 
\bey
 &&d^2-\sum_{p_1|d}\frac{d^2}{p_1^2}+\sum_{p_1,p_2|d}\frac{d^2}{(p_1p_2)^2}-\sum_{p_1,p_2,p_3|d}\frac{d^2}{(p_1p_2p_3)^2}+\ldots+(-1)^{\omega(d)}\nonumber=\\ 
&&=d^2\prod_{p|d}\left(1-\frac{1}{p^2}\right)=\prod_{p|d}(p^2-1).\nonumber
\eey
\end{proof}

\section{The spectrum of the shift operator $T$ 
}\label{sec-spectrum-of-T}
Recall the definition of the dynamical system $(X,\mathcal B,\Pi,T)$ given in Section \ref{natural-dyn-sys}.
Denote by $U$ the operator on the Hilbert space $\mathcal H=L^2(X,\mathcal B,\Pi)$ given by
\be (U f)(x)=f(T x)\label{def-operator-U}.\ee
Since $T$ is measure-preserving, the operator $U$ is unitary. The goal of this section is to prove the following
\begin{theorem}\label{thm-spectrum-of-U-is-Lambda}
The spectrum of the operator $U$ is given by $\Lambda$.
\end{theorem}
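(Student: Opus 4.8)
The plan is to show that the spectrum of $U$ is exactly $\Lambda$ by establishing two inclusions: first that every $\lambda\in\Lambda$ is an eigenvalue of $U$ (hence lies in the spectrum), and second that the point spectrum is \emph{all} of the spectrum, i.e.\ $U$ has pure point spectrum and no eigenvalue outside $\Lambda$. For the first inclusion I would attempt to build an eigenfunction $\theta_\lambda$ for each $\lambda=e(l/d^2)\in\Lambda$ by hand. The natural candidate is a (completed) $L^2$-limit of functions built from the coordinate projections $x\mapsto x(n)$: since $U$ acts on these as a shift $x(n)\mapsto x(n+1)$, one looks for $\theta_\lambda$ in the closed span of $\{x(n)\}_{n\in\Z}$ of the form ``$\sum_n \bar\lambda^{\,n} x(n)$'' suitably normalized and renormalized. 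The key computational input is precisely Lemma~\ref{lem-expo-sum-c_2}: the inner products $\langle x(s),\theta_\lambda\rangle$ and, more importantly, the Gram matrix of the truncated sums $\frac1N\sum_{n\le N}\bar\lambda^n x(n)$, are governed by $\frac1N\sum_{n\le N}\lambda^n c_2(n)\to \frak Y_2(\lambda)=g(d)^2$. This shows the truncations converge in $\mathcal H$ to a nonzero vector $\theta_\lambda$ (the limit is nonzero exactly because $g(d)\ne0$), and that $U\theta_\lambda=\lambda\theta_\lambda$.

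For the reverse inclusion — that $U$ has no spectrum outside $\Lambda$ — the strategy is to prove that $U$ has pure point spectrum with all eigenvalues in $\Lambda$, equivalently that the eigenfunctions $\{\theta_\lambda\}_{\lambda\in\Lambda}$ span a dense subspace of $\mathcal H$. Since $\mathcal B$ is generated by the cylinder sets, $\mathcal H$ is generated as a $U$-invariant subspace by the single coordinate function $x\mapsto x(0)$ together with all its translates $x\mapsto x(n)$ and products thereof (the algebra of finite products $x(n_1)\cdots x(n_r)$ is dense). So it suffices to show each such product lies in the closed span of the $\theta_\lambda$. The spectral measure of $x(0)$ under $U$ is, by construction, the measure $\nu$ of~(\ref{spectral-measure-nu}) whose Fourier coefficients are $c_2(k)$ (Bochner), and $\nu$ is purely atomic supported on $\Lambda$; this already gives that the cyclic subspace generated by $x(0)$ decomposes over $\Lambda$. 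To handle products one needs the higher correlation functions: the spectral/mixed-spectral data of $x(n_1)\cdots x(n_r)$ is controlled by $c_{r+1}$, and Lemma~\ref{lem-sums-new} (and its $r$-fold generalizations alluded to in Section~\ref{sec-averages-corr-fun}) computes the relevant multi-parameter averages $\frak Y_{r+1}$, showing that projecting such a product onto the span of the $\theta_\lambda$ loses no mass. One then assembles: the eigenfunctions span, so $U$ has pure point spectrum $\subseteq\Lambda$, and combined with the first part the spectrum is exactly $\Lambda$.

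Concretely I would organize the proof as: (1) define $\theta_\lambda^{(N)}=\frac1N\sum_{n\le N}\bar\lambda^n x(n)$ (or a variant normalized by $g(d)$), use Lemma~\ref{lem-expo-sum-c_2} to show $\langle\theta_\lambda^{(N)},\theta_\mu^{(N)}\rangle$ converges, deduce $\theta_\lambda^{(N)}\to\theta_\lambda$ in $\mathcal H$ with $\theta_\lambda\ne0$ and $U\theta_\lambda=\lambda\theta_\lambda$, and note $\langle\theta_\lambda,\theta_\mu\rangle=0$ for $\lambda\ne\mu$ since distinct $\lambda$ are orthogonal as characters; (2) show, via Parseval applied to the atomic measure $\nu$, that $x(0)\in\overline{\mathrm{span}}\{\theta_\lambda:\lambda\in\Lambda\}$, and more generally that $x(s)$ and then all products $x(s_1)\cdots x(s_r)$ lie in this span, using $c_{r+1}$ and the averaging lemmas (Lemma~\ref{lem-sums-new} and its generalizations) to verify the norm identity $\|x(s_1)\cdots x(s_r)\|^2=\sum_{\lambda}|\langle x(s_1)\cdots x(s_r),\theta_\lambda\rangle|^2$; (3) conclude the $\theta_\lambda$ span $\mathcal H$, hence $U$ has pure point spectrum equal to $\Lambda$.

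The main obstacle I anticipate is step~(2) for $r\ge2$: showing the products $x(s_1)\cdots x(s_r)$ are genuinely in the span of the $\theta_\lambda$, i.e.\ a completeness/Parseval statement. This requires not just computing the individual averages $\frak Y_{r+1}$ but verifying that the total $L^2$-mass of $x(s_1)\cdots x(s_r)$ is accounted for by its correlations with the $\theta_\lambda$ — a statement that the ``diagonal'' multi-averages capture everything and there is no residual continuous component. This is exactly where the absolute convergence in Hall's formula (Lemma~\ref{lem-Hall}) and the precise product structure of the correlation functions (Proposition~\ref{prop-new-thm4&5}) must be leveraged carefully, and where bookkeeping over the lattice of rationals $l/d^2$ with square-free constraints is delicate. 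The first inclusion, by contrast, is essentially a direct consequence of Lemma~\ref{lem-expo-sum-c_2} and should be routine.
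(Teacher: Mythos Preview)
Your proposal is essentially correct and your step~(1) matches the paper's construction of $\theta_\lambda$ almost verbatim (the paper phrases it via the von Neumann ergodic theorem for the twisted operator $U_\lambda$, which is exactly your Ces\`aro limit). The difference is in step~(2), the completeness argument. The paper does \emph{not} verify a Parseval identity for each product $x(s_1)\cdots x(s_r)$ directly. Instead it proves a structural fact you did not anticipate: the normalized eigenfunctions satisfy $\tilde\theta_{\lambda_1}\tilde\theta_{\lambda_2}=\pm\tilde\theta_{\lambda_1\lambda_2}$ (Theorem~\ref{product-eigenfunctions}), so the closed linear span $H$ of the coordinate functions $x(s)$ is closed under multiplication. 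The paper then invokes Rokhlin's theory of unitary rings and measurable partitions: $H$ is a subring of $\mathcal H=L^2(X,\mathcal B,\Pi)$, hence corresponds to a sub-$\sigma$-algebra $\mathcal F\subseteq\mathcal B$, and a short approximation argument forces $\mathcal F=\mathcal B$, i.e.\ $H=\mathcal H$.

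What each approach buys: your direct Parseval route avoids the Rokhlin machinery but, as you correctly flag, requires checking for every $r$ that the total $L^2$-mass of $x(s_1)\cdots x(s_r)$ is captured by its projections onto the $\theta_\lambda$; the paper in fact mentions this alternative in a remark after the ergodicity theorem and calls it ``rather long.'' The paper's route trades that bookkeeping for one clean algebraic computation (the product formula for eigenfunctions, which uses only Lemma~\ref{lem-sums-new} for $r=2$) plus an abstract black box. In particular, the paper never needs the $r$-fold generalizations of Lemma~\ref{lem-sums-new} that your plan would require.
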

Let us show that $\Lambda$ is contained in the spectrum of $U$. 
This fact is given by the following
\begin{theorem}\label{existence-of-eigenfunctions}
Let $\lambda=e\!\left(\frac{l}{d^2}\right)\in\Lambda$. 
Then there exists a function $\theta_{\lambda
}\in\mathcal H$ such that
\be (U\theta_{\lambda
})(x)= \lambda\, 
\theta_{\lambda 
}(x)\label{general-construction-eigenfunctions}\ee
for $\Pi$-almost every $x\in X$.
\end{theorem}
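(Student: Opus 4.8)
The goal is to produce, for each $\lambda=e(l/d^2)\in\Lambda$, an eigenfunction $\theta_\lambda\in\mathcal H=L^2(X,\mathcal B,\Pi)$ with $U\theta_\lambda=\lambda\theta_\lambda$. The natural candidate is built from the coordinate functions $x\mapsto x(n)$, which generate $\mathcal H$. Concretely, I would first try the naive Fourier-type average
\be
\theta_\lambda(x)=\lim_{N\to\infty}\frac{1}{N}\sum_{n=1}^N\lambda^{-n}\,x(n),
\ee
and show this $L^2$-limit exists. Applying $U$ shifts the index $n\mapsto n+1$, which multiplies the sum by $\lambda$ up to a boundary term that vanishes in the limit, giving $U\theta_\lambda=\lambda\theta_\lambda$ formally. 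So the entire content of the proof is the \emph{existence of the $L^2$-limit}, i.e. a Cauchy estimate for the partial averages $\theta_\lambda^{(N)}(x):=\frac1N\sum_{n\le N}\lambda^{-n}x(n)$.

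\textbf{Key steps.} To prove $\{\theta_\lambda^{(N)}\}_N$ is Cauchy in $\mathcal H$, I would expand $\|\theta_\lambda^{(N)}-\theta_\lambda^{(M)}\|^2$ and, more fundamentally, compute inner products of the form
\be
\langle\theta_\lambda^{(N)},\theta_\lambda^{(M)}\rangle=\frac{1}{NM}\sum_{n=1}^N\sum_{m=1}^M\lambda^{m-n}\,\E_\Pi\big[x(n)x(m)\big].
\ee
By the definition of $\Pi$ in \eqref{definition-measure-Pi-toquote}, $\E_\Pi[x(n)x(m)]=c_2(|n-m|)$ (and $\E_\Pi[x(n)^2]=\E_\Pi[x(n)]=c_1=6/\pi^2$ on the diagonal). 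Reindexing by $k=m-n$, the double sum collapses to a weighted average of $\lambda^k c_2(|k|)$, and Lemma~\ref{lem-expo-sum-c_2} tells us precisely that $\frac1N\sum_{n=1}^N\lambda^n c_2(n)\to\frak Y_2(\lambda)=g(d)^2$. A Cesàro/Toeplitz summation argument then shows $\langle\theta_\lambda^{(N)},\theta_\lambda^{(M)}\rangle\to |g(d)|^2$ as $N,M\to\infty$ independently, which forces $\|\theta_\lambda^{(N)}-\theta_\lambda^{(M)}\|\to0$; hence the limit $\theta_\lambda$ exists in $\mathcal H$. The same computation shows $\|\theta_\lambda\|^2=g(d)^2>0$ (using Lemma~\ref{prop-formula-sigma-d}, or rather the explicit formula for $g$), so $\theta_\lambda$ is not the zero function and is a genuine eigenfunction. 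Finally I would verify the eigenvalue equation directly: $(U\theta_\lambda^{(N)})(x)=\frac1N\sum_{n=1}^N\lambda^{-n}x(n+1)=\lambda\cdot\frac1N\sum_{n=2}^{N+1}\lambda^{-n}x(n)=\lambda\theta_\lambda^{(N)}(x)+O(1/N)$ in $\mathcal H$-norm, and pass to the limit.

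\textbf{Main obstacle.} The delicate point is not the algebra but controlling the double-average and confirming that the limit $\lim_{N,M\to\infty}\langle\theta_\lambda^{(N)},\theta_\lambda^{(M)}\rangle$ exists and equals the same value regardless of how $N,M\to\infty$ — equivalently, that the "diagonal band" $|n-m|$ bounded contributes negligibly while the bulk is governed by $\frak Y_2(\lambda)$. This requires the asymptotic in Lemma~\ref{lem-expo-sum-c_2} together with a uniform bound $0<c_2(k)<6/\pi^2$ to justify interchanging limits and discarding boundary terms; the decay estimate noted in Remark~\ref{rk-bigO} is reassuring here though not strictly needed for a single $\lambda$. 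A secondary subtlety is making the informal manipulation $U\theta_\lambda^{(N)}\approx\lambda\theta_\lambda^{(N)}$ rigorous in $L^2$ rather than pointwise, which is routine once the norm convergence is in hand. (One could alternatively phrase the whole argument spectrally: the sequence $(x(n))_n$ defines, via $\Pi$, a stationary process whose spectral measure is $\nu$ from \eqref{spectral-measure-nu}; since $\nu$ has an atom at $\lambda$, the projection of $x(0)$ onto the $\lambda$-eigenspace is nonzero, and that projection is $\theta_\lambda$ up to normalization. I would likely present the concrete averaging argument as the primary proof and remark on the spectral interpretation.)
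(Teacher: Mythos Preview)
Your approach is correct but takes a much more laborious route than the paper. The paper's proof is three lines: it defines the auxiliary unitary $U_\lambda h=\lambda^{-1}h\circ T$, applies the von Neumann mean ergodic theorem to $U_\lambda$ with seed $f_0(x)=x(0)$, and concludes that the Ces\`aro averages $\frac1N\sum_{n=1}^N U_\lambda^n f_0=\frac1N\sum_{n=1}^N\lambda^{-n}x(n)$ converge in $\mathcal H$ to the orthogonal projection of $f_0$ onto the $U_\lambda$-invariant subspace; this projection $\theta_\lambda$ is by construction $U_\lambda$-invariant, which is exactly $U\theta_\lambda=\lambda\theta_\lambda$. No correlation computation, no Cauchy estimate, no appeal to Lemma~\ref{lem-expo-sum-c_2} is needed at this stage.

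What you gain from your explicit route is that you simultaneously prove $\|\theta_\lambda\|^2=g(d)^2>0$, so the eigenfunction is nontrivial. The paper does \emph{not} get this for free from von Neumann's theorem (which happily projects to zero when there is no atom), and instead establishes nontriviality separately as Corollary~\ref{cor-eigenfunctions-nontrivial}, deduced from Theorem~\ref{thm-innerproduct-theta_l,d^2-x(s)} via exactly the computation with $c_2$ and Lemma~\ref{lem-expo-sum-c_2} that you outline. So your ``main obstacle'' (the double-limit control of $\langle\theta_\lambda^{(N)},\theta_\lambda^{(M)}\rangle$) is real work you are doing that the paper avoids here and defers; conversely, the paper's soft argument cleanly separates existence of the $L^2$-limit (general ergodic theory) from its nonvanishing (arithmetic input). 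Your parenthetical ``spectral interpretation'' at the end is essentially the paper's viewpoint.
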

\begin{proof}
Let $f_0(x)=x(0)$ and let $U_{\lambda 
}$ be the unitary operator on $\mathcal H$ defined by
\be(U_{\lambda
} h)(x)=\lambda^{-1}\,
h(Tx).\nonumber\ee
By the von Neumann Ergodic Theorem, the following limit exists in $\mathcal H$:
\be\lim_{N\to\infty}\frac{1}{N}\sum_{n=1}^N U_{\lambda
}^nf_0(x)=\lim_{N\to\infty}\frac{1}{N}\sum_{n=1}^N \lambda^{-n} 
f_0(T^n x)=\lim_{N\to\infty}\frac{1}{N}\sum_{n=1}^N  \lambda^{-n} 
x(n)=:\theta_{\lambda 
}(x).\label{def-eigenfunction-theta_l,d^2}\ee
The function $\theta_{\lambda 
}$ is $U_{\lambda 
}$-invariant, i.e. $(U_{\lambda 
}\theta_{\lambda
})(x)=\theta_{\lambda
}(x)$ for $\Pi$-almost every $x\in X$. This implies that $\lambda^{-1}
\theta_{\lambda 
}(Tx)=\theta_{\lambda
}(x)$, i.e. (\ref{general-construction-eigenfunctions}).
\end{proof}

Denote by $x(s)$ the function $X\to\{0,1\}$ given by the projection of $x\in X$ onto its $s$-th coordinate. Introduce the subspace $H\subseteq \mathcal H$, 
\be H=\overline{\left\{\sum_{s} a_s x(s)\right\}},\nonumber\ee
i.e. 
the closure of the set of all complex linear combinations of the $x(s)$'s.
$H$ is invariant under $U$, and by (\ref{def-eigenfunction-theta_l,d^2}), all the eigenfunctions $\theta_{\lambda 
}
$ belong to $H$. Let us remark that, since the operator $U$ is unitary, the eigenfunctions $\theta_{\lambda}$ are orthogonal to one another for different $\lambda$.
%
%
Let us write \be x(s)=\sum_{\lambda\in\Lambda}\left\langle x(s),\theta_\lambda\right\rangle\theta_\lambda.\nonumber\ee 
Recall 
(\ref{function-g-Hall}).
We have the following 
\begin{theorem}\label{thm-innerproduct-theta_l,d^2-x(s)}
Let $\lambda=e\!\left(\frac{l}{d^2}\right)\in\Lambda$. Then for every $s\in\Z$ we have
\be \left\langle x(s), \theta_{\lambda
}
\right\rangle=
\lambda^s g(d)^2.
\label{statement-thm-innerproduct-theta_l,d^2-x(s)}\ee
\end{theorem}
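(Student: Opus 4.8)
The plan is to compute the inner product $\langle x(s),\theta_\lambda\rangle$ directly from the defining formula (\ref{def-eigenfunction-theta_l,d^2}) for $\theta_\lambda$. Since the von Neumann ergodic theorem gives convergence of the Ces\`aro averages in $\mathcal H=L^2(X,\mathcal B,\Pi)$, and the $L^2$-inner product is continuous, I would write
\be
\langle x(s),\theta_\lambda\rangle=\lim_{N\to\infty}\frac{1}{N}\sum_{n=1}^N\lambda^{-n}\,\langle x(s),x(n)\rangle_{L^2(X,\mathcal B,\Pi)},\nonumber
\ee
being careful that the conjugate falls on the second argument so that the factor is $\lambda^{-n}$ conjugated, i.e. $\overline{\lambda^{-n}}=\lambda^n$; I would check this sign convention against the statement, which has $\lambda^s$ with a positive exponent.

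The next step is to identify $\langle x(s),x(n)\rangle_{L^2(X,\mathcal B,\Pi)}$ with a value of the correlation function $c_2$. Since $x(s)$ and $x(n)$ are $\{0,1\}$-valued, $\langle x(s),x(n)\rangle=\Pi\{x:x(s)=x(n)=1\}$, which by the defining property (\ref{definition-measure-Pi-toquote}) of $\Pi$ equals $c_2(|n-s|)$ when $n\neq s$ and $c_2(0)$ when $n=s$. Thus, after shifting the summation index by $s$ (and absorbing the finitely many boundary terms, which contribute $O(1/N)\to 0$), the Ces\`aro average becomes $\lambda^s$ times $\frac{1}{N}\sum_{m}\lambda^m c_2(m)$ up to a negligible error; here one uses $c_2(-m)=c_2(m)$, which is immediate from Mirsky's formula (\ref{Mirsky-c2}) since it depends only on $m$ through divisibility by $p^2$. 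Then I would invoke Lemma \ref{lem-expo-sum-c_2}, which says exactly that $\lim_{N\to\infty}\frac{1}{N}\sum_{n=1}^N\lambda^n c_2(n)=g(d)^2$, to conclude $\langle x(s),\theta_\lambda\rangle=\lambda^s g(d)^2$.

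The bookkeeping obstacle I anticipate is handling the index shift cleanly: the sum $\sum_{n=1}^N\lambda^{-n}c_2(|n-s|)$ splits into a main piece matching $\lambda^{s}\sum_{m=1-s}^{N-s}\lambda^{-m}c_2(|m|)$ and a correction for the terms with $1\leq n\leq \min(N, 2s)$ near the reflection point $n=s$, but since $c_2$ is bounded (by $6/\pi^2$) and there are at most $O(|s|)$ such exceptional terms, they contribute $O(|s|/N)\to0$ and are harmless. One must also note that whether $\lambda=1$ or $\lambda\neq1$ is immaterial, since Lemma \ref{lem-expo-sum-c_2} covers all $\lambda\in\Lambda$ uniformly. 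The only genuinely delicate point is the direction of the exponent, $\lambda^{-n}$ versus $\lambda^{n}$, which I would pin down by noting that $\theta_\lambda$ is an eigenfunction with eigenvalue $\lambda$ (so $\langle x(s),\theta_\lambda\rangle$ should carry $\lambda^{s}$ to be consistent with $x(s)=U^s x(0)$ and $U\theta_\lambda=\lambda\theta_\lambda$); everything else is routine.
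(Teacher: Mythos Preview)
Your proposal is correct and follows essentially the same route as the paper: compute $\langle x(s),\theta_\lambda\rangle$ from the ergodic-average definition of $\theta_\lambda$, identify $\langle x(s),x(n)\rangle$ with $c_2(n-s)$, shift by $s$ to pull out $\lambda^s$, and invoke Lemma~\ref{lem-expo-sum-c_2}. The paper simply writes $c_2(n-s)$ and performs the shift without comment, whereas you spell out the conjugation convention, the evenness of $c_2$, and the $O(|s|/N)$ boundary terms; these are welcome clarifications but not a different argument.
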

%
%
\begin{proof}
Let us use (\ref{def-eigenfunction-theta_l,d^2}) and write
\bey
\left\langle x(s),\theta_{\lambda 
}
\right\rangle=\lim_{N\to\infty}\left\langle x(s),\frac{1}{N}\sum_{n=1}^N \lambda^{-n}
x(n)
\right\rangle=\lim_{N\to\infty}\frac{1}{N}\sum_{n=1}^N
\lambda^{n}
\langle x(s),x(n)\rangle.\label{pf-eigenfunctions-theta_d-1}\eey
Notice that $\langle x(s),x(n)\rangle=c_2(n-s)$, where $c_2$ is the second correlation function given by Proposition \ref{new-thm2&3}. Equation 
(\ref{pf-eigenfunctions-theta_d-1}) becomes
\bey
\langle x(s),\theta_{\lambda 
}
\rangle=\lim_{N\to\infty}\frac{1}{N}\sum_{n=1}^N \lambda^{n} 
c_2(n-s)
=\lambda^{s} 
\lim_{N\to\infty}\frac{1}{N}\sum_{n=1}^N \lambda^{n} 
c_2(n)=\lambda^s\,\frak Y_2(\lambda).\label{pf-innerproduct-a}
\eey
The needed statement follows now from Lemma \ref{lem-expo-sum-c_2}.
\end{proof}
Theorem \ref{thm-innerproduct-theta_l,d^2-x(s)} immediately implies the following
\begin{cor}\label{cor-eigenfunctions-nontrivial}
All eigenfunctions $\theta_{\lambda
}$ are non-zero.
\end{cor}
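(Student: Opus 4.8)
The plan is to read off the conclusion directly from Theorem \ref{thm-innerproduct-theta_l,d^2-x(s)}. Fix $\lambda=e(l/d^2)\in\Lambda$, so that $d$ is square-free and $\gcd(l,d^2)$ is square-free. Theorem \ref{thm-innerproduct-theta_l,d^2-x(s)} states that $\langle x(s),\theta_\lambda\rangle=\lambda^s g(d)^2$ for every $s\in\Z$, where $g$ is the function from (\ref{function-g-Hall}).

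The only thing to check is that $g(d)\neq 0$. Since $d$ is square-free, $\mu(d)=(-1)^{\omega(d)}\in\{-1,+1\}$, and $\prod_{p|d}\frac{1}{p^2-1}$ is a finite product of strictly positive numbers, hence strictly positive. Therefore $g(d)=\frac{6}{\pi^2}\mu(d)\prod_{p|d}\frac{1}{p^2-1}\neq 0$, and consequently $g(d)^2>0$. Since $|\lambda^s|=1$, this gives $\langle x(s),\theta_\lambda\rangle=\lambda^s g(d)^2\neq 0$ for every $s$.

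If $\theta_\lambda$ were the zero element of $\mathcal H=L^2(X,\mathcal B,\Pi)$, then all its inner products, in particular $\langle x(0),\theta_\lambda\rangle$, would vanish, contradicting the previous paragraph. Hence $\theta_\lambda\neq 0$ in $\mathcal H$, which is the assertion of the corollary. There is no real obstacle here: the substantive work has already been done in Lemma \ref{lem-expo-sum-c_2} and Theorem \ref{thm-innerproduct-theta_l,d^2-x(s)}, and this step is merely the observation that $g(d)$ does not vanish on square-free arguments.
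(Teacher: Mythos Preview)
Your proposal is correct and matches the paper's approach exactly: the paper simply states that the corollary follows immediately from Theorem \ref{thm-innerproduct-theta_l,d^2-x(s)}, and you have spelled out the (obvious) reason why, namely that $g(d)\neq 0$ for square-free $d$ so the inner product $\langle x(0),\theta_\lambda\rangle$ is nonzero.
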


\begin{remark}
The formula (\ref{def-eigenfunction-theta_l,d^2}) can be written for arbitrary measure-preserving map, but in most cases (e.g. automorphisms with continuous spectrum) it gives zero. Theorem \ref{thm-innerproduct-theta_l,d^2-x(s)} shows that in our case it is non-zero.
\end{remark}

We can also compute the $L^2$-norm of each eigenfunction explicitly. \begin{theorem}\label{norm-eigenfunctions}
Let $\lambda=e\!\left(\frac{l}{d^2}\right)\in\Lambda$. 
Then
\be\left\|\theta_{\lambda
}\right\|=\left|g(d)\right|
.\ee
\end{theorem}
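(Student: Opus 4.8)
The plan is to compute $\|\theta_\lambda\|^2 = \langle \theta_\lambda, \theta_\lambda\rangle$ directly by expanding one copy of $\theta_\lambda$ using its definition \eqref{def-eigenfunction-theta_l,d^2} as an $L^2$-limit of Ces\`aro averages, and then invoking Theorem \ref{thm-innerproduct-theta_l,d^2-x(s)}. Concretely, since $\theta_\lambda = \lim_{N\to\infty}\frac{1}{N}\sum_{n=1}^N \lambda^{-n} x(n)$ in $\mathcal H$, continuity of the inner product gives
\be
\|\theta_\lambda\|^2 = \left\langle \theta_\lambda, \lim_{N\to\infty}\frac{1}{N}\sum_{n=1}^N \lambda^{-n} x(n)\right\rangle = \lim_{N\to\infty}\frac{1}{N}\sum_{n=1}^N \lambda^{n}\,\overline{\langle x(n),\theta_\lambda\rangle}.\nonumber
\ee
By Theorem \ref{thm-innerproduct-theta_l,d^2-x(s)}, $\langle x(n),\theta_\lambda\rangle = \lambda^n g(d)^2$, and since $g(d)$ is real, $\overline{\langle x(n),\theta_\lambda\rangle} = \lambda^{-n} g(d)^2$. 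Hence each summand equals $\lambda^n \cdot \lambda^{-n} g(d)^2 = g(d)^2$, so the Ces\`aro average is identically $g(d)^2$ and the limit is $g(d)^2$. Therefore $\|\theta_\lambda\|^2 = g(d)^2$, and taking square roots yields $\|\theta_\lambda\| = |g(d)|$.

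An alternative, essentially equivalent, route is to start from the expansion $x(s) = \sum_{\lambda\in\Lambda}\langle x(s),\theta_\lambda\rangle\theta_\lambda$ with the $\theta_\lambda$ mutually orthogonal (as noted after the definition of $H$), take $s=0$, and apply Parseval: $\|x(0)\|^2 = c_2(0) = \frac{6}{\pi^2} = \sum_\lambda |\langle x(0),\theta_\lambda\rangle|^2 / \|\theta_\lambda\|^2$. However, this only gives an aggregate identity rather than the norm of an individual eigenfunction, so I would not pursue it for the proof itself; it could serve as a consistency check, using $\sum_{\mu^2(d)=1}(d^2 \text{-many terms})\,|g(d)|^4/|g(d)|^2 = \sum_{\mu^2(d)=1} d^2 g(d)^2$, which by \eqref{function-g-Hall} and Lemma \ref{lem-prod-p-set-of-l} should sum to $\frac{6}{\pi^2}$.

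The only subtlety — and it is minor — is justifying the interchange of the limit with the inner product, which is immediate from continuity of $\langle\cdot,\cdot\rangle$ in the second argument combined with the fact, guaranteed by the von Neumann Ergodic Theorem, that the Ces\`aro averages converge in the $\mathcal H$-norm (not merely weakly). There is no real obstacle here: the whole argument is a two-line consequence of Theorem \ref{thm-innerproduct-theta_l,d^2-x(s)} together with the reality of $g(d)$. I would present the computation in display form exactly as above, cite \eqref{def-eigenfunction-theta_l,d^2} for the representation of $\theta_\lambda$, cite Theorem \ref{thm-innerproduct-theta_l,d^2-x(s)} for the value of $\langle x(n),\theta_\lambda\rangle$, and conclude.
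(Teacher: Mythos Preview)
Your proof is correct and follows essentially the same approach as the paper: expand one copy of $\theta_\lambda$ via its defining Ces\`aro average, pull the limit outside the inner product by continuity, and invoke Theorem~\ref{thm-innerproduct-theta_l,d^2-x(s)} to evaluate each $\langle x(n),\theta_\lambda\rangle$. You even supply slightly more justification than the paper (the reality of $g(d)$ and the $L^2$-convergence from von Neumann's theorem), so there is nothing to add.
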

 \begin{proof}
This is a straightforward application of Theorem \ref{thm-innerproduct-theta_l,d^2-x(s)}:
\bey
\left\|\theta_{\lambda
}\right\|^2&=&\langle \theta_{\lambda
},\theta_{\lambda
}\rangle=\left\langle \theta_{\lambda
}, \lim_{N\to\infty}\frac{1}{N}\sum_{n=1}^{N}\lambda^{-n} 
x(n)\right\rangle=\lim_{N\to\infty}\frac{1}{N}\sum_{n=1}^{N}\lambda^n 
\overline{\langle x(s), \theta_{\lambda
}
\rangle}=\nonumber\\
&=&\left(\frac{6}{\pi^2}\right)^2\prod_{p|d}\frac{1}{(p^2-1)^2}.\nonumber\eey
 \end{proof}
%
%
%

\begin{prop}\label{eigenfuctions-basis-H}
The set of eigenfunctions $\{\theta_\lambda\}_{\lambda\in\Lambda}$ is a basis for $H$.
\end{prop}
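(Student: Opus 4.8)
The plan is to show that the eigenfunctions $\{\theta_\lambda\}_{\lambda\in\Lambda}$ span $H$, which—together with their pairwise orthogonality (noted already, since $U$ is unitary and distinct $\lambda$ have distinct eigenvalues)—gives that they form a basis. Since $H$ is by definition the closed linear span of the coordinate functions $\{x(s)\}_{s\in\Z}$, it suffices to show that each $x(s)$ lies in $\overline{\mathrm{span}}\{\theta_\lambda\}_{\lambda\in\Lambda}$. Equivalently, writing $P$ for the orthogonal projection onto $\overline{\mathrm{span}}\{\theta_\lambda\}$, I would show $\|x(s)\|^2 = \sum_{\lambda\in\Lambda}|\langle x(s),\theta_\lambda\rangle|^2/\|\theta_\lambda\|^2$, i.e. the Parseval identity holds for $x(s)$ against this orthogonal system.

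First I would compute the left-hand side: $\|x(s)\|^2 = \langle x(s),x(s)\rangle = c_2(0) = 6/\pi^2$, independent of $s$, using that $\langle x(s),x(n)\rangle = c_2(n-s)$ as established in the proof of Theorem \ref{thm-innerproduct-theta_l,d^2-x(s)}. Next I would compute the right-hand side using the already-proven formulae: by Theorem \ref{thm-innerproduct-theta_l,d^2-x(s)}, $\langle x(s),\theta_\lambda\rangle = \lambda^s g(d)^2$ for $\lambda = e(l/d^2)$, so $|\langle x(s),\theta_\lambda\rangle|^2 = g(d)^4$; and by Theorem \ref{norm-eigenfunctions}, $\|\theta_\lambda\|^2 = g(d)^2$. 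Hence each term of the series equals $g(d)^2$, and the sum over $\lambda\in\Lambda$ becomes
\be
\sum_{\mu^2(d)=1}\ \sum_{\scriptsize{\ba{c}0\leq l\leq d^2-1\\ \mu^2(\gcd(l,d^2))=1\ea}} g(d)^2
= \sum_{\mu^2(d)=1} g(d)^2 \left|\left\{l:\ 0\leq l\leq d^2-1,\ \mu^2(\gcd(l,d^2))=1\right\}\right|,
\ee
where I count the $l=0$ term together with the $1\leq l\leq d^2-1$ terms (for $d=1$ the only term is $l=0$). By Lemma \ref{lem-prod-p-set-of-l}, the cardinality of that set of $l$ equals $\prod_{p|d}(p^2-1)$ (the lemma counts $1\leq l\leq d^2-1$; including $l=0$, whose $\gcd$ is $d^2$, not square-free unless $d=1$, requires a small check—so for $d>1$ the count is exactly $\prod_{p|d}(p^2-1)$, and for $d=1$ it is $1=\prod_{\emptyset}$). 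Since $g(d)^2 = (6/\pi^2)^2\mu(d)^2\prod_{p|d}(p^2-1)^{-2}$, each $d$-block contributes $(6/\pi^2)^2\prod_{p|d}(p^2-1)^{-1}$, and summing over square-free $d$ gives, exactly as in the Remark following Proposition \ref{thm-density-level-sets-for-c2},
\be
\left(\frac{6}{\pi^2}\right)^2\sum_{\mu^2(d)=1}\prod_{p|d}\frac{1}{p^2-1} = \left(\frac{6}{\pi^2}\right)^2\prod_p\left(1+\frac{1}{p^2-1}\right) = \left(\frac{6}{\pi^2}\right)^2\prod_p\left(1-\frac{1}{p^2}\right)^{-1} = \left(\frac{6}{\pi^2}\right)^2\cdot\frac{\pi^2}{6} = \frac{6}{\pi^2}.
\ee
This matches $\|x(s)\|^2$, so Parseval holds and $x(s)\in\overline{\mathrm{span}}\{\theta_\lambda\}$ for every $s$; taking closed linear spans, $H\subseteq\overline{\mathrm{span}}\{\theta_\lambda\}\subseteq H$, so equality holds and $\{\theta_\lambda\}_{\lambda\in\Lambda}$ is an orthogonal basis of $H$.

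The main obstacle I anticipate is purely bookkeeping rather than conceptual: making the reindexing of $\Lambda$ precise. Elements of $\Lambda$ must be parametrized so each appears exactly once—this is the uniqueness of representation noted in Section \ref{subsec-corr-fun}—and one must be careful that $\sigma$-additivity / Fubini for the nonnegative series is legitimate (it is, since all terms are nonnegative and the total is finite, namely $6/\pi^2$). A second minor point is justifying that Parseval against the orthogonal family $\{\theta_\lambda\}$ actually forces membership in its closed span: this is the standard Hilbert-space fact that $v\in\overline{\mathrm{span}}\{e_\alpha\}$ iff $\|v\|^2 = \sum_\alpha |\langle v,e_\alpha\rangle|^2/\|e_\alpha\|^2$, applied with $v=x(s)$. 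Neither of these is deep, so the proof is short.
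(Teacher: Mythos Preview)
Your proof is correct and takes a somewhat different route from the paper's. The paper argues via spectral theory: since $H$ is the closed $U$-cyclic subspace generated by $x(0)$ and the spectral measure $\nu$ of $x(0)$ (determined by $\hat\nu(k)=c_2(k)$ via Bochner) is purely atomic with support $\Lambda$, each atom produces an eigenfunction and together they generate $H$. Your approach bypasses this by a direct Parseval verification, computing both $\|x(s)\|^2=6/\pi^2$ and $\sum_\lambda|\langle x(s),\tilde\theta_\lambda\rangle|^2=6/\pi^2$ explicitly. Amusingly, the paper carries out your exact computation immediately \emph{after} the proposition, but only as a consistency check once the basis property is already known; you have inverted the logic and used it as the proof itself. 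Your route is more elementary and self-contained (it avoids invoking the spectral theorem for the cyclic subspace), at the cost of relying on the specific numerics of $g(d)$ and Lemma~\ref{lem-prod-p-set-of-l}; the paper's argument is shorter and more conceptual but leans on the general correspondence between atoms of the spectral measure and eigenfunctions.
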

\begin{proof}
Since the eigenfunctions are orthogonal it is enough to show that they span the space of all linear combinations of the $x(s)$'s. 
We know that each atom $\{\lambda\}$ of the spectral measure $\nu$ (associated to the second correlation function via  Bochner's theorem) corresponds to $\theta_\lambda$  in the space $H$ generated by linear forms, and these form a set of generators for $H$.
\end{proof}
Let us define the normalized eigenfunctions: for $\lambda\in\Lambda$ set
\be \tilde\theta_\lambda=\frac{\theta_\lambda}{\|\theta_\lambda\|},\nonumber\ee
so that $\{\tilde\theta_\lambda\}_{\lambda\in\Lambda}$ is an orthonormal basis for $H$.
Let us write
\bey x(s)=\sum_{\lambda\in\Lambda}\left\langle x(s),\tilde\theta_\lambda\right\rangle\tilde\theta_\lambda.\nonumber\eey
Since $\{\tilde\theta_\lambda\}_{\lambda}$ is an orthonormal basis for $H$, then by Lemma \ref{lem-prod-p-set-of-l} and Theorem \ref{norm-eigenfunctions}
\bey ||x(s)||^2&=&\sum_{\lambda\in\Lambda}\left|\left\langle x(s),\tilde\theta_\lambda\right\rangle\right|^2=\left(\frac{6}{\pi^2}\right)^2\sum_{d\in\mathcal Q}\prod_{p|d}\frac{1}{p^2-1}=\left(\frac{6}{\pi^2}\right)^2\sum_{\mathcal P'\subset\mathcal P,\, |\mathcal P'|<\infty}\prod_{p\in\mathcal P'}\frac{1}{p^2-1}=\nonumber\\
&=&\left(\frac{6}{\pi^2}\right)^2\prod_{p}\left(1+\frac{1}{p^2-1}\right)=\frac{6}{\pi^2}.\nonumber\eey
The same argument allows us to estimate the size of the error term in the following approximation of $x(s)$: for $D\geq 1$ let
\be x_{ D}(s)=\sum_{\scriptsize{\ba{c}\lambda=e\!\left(l/d^2\right)\in\Lambda\\ d\leq D\ea}}\left\langle x(s),\tilde\theta_{\lambda}\right\rangle\tilde\theta_\lambda\nonumber.\ee
Arguing as in Remark \ref{rk-bigO}, we have
\be
\left\| x(s)-x_D(s)\right\|^2=\sum_{\scriptsize{\ba{c} \lambda=e\!\left(l/d^2\right)\in\Lambda\\ d>D\ea}}\left|\left\langle x(s),\tilde\theta_\lambda\right\rangle\right|^2=\frac{6}{\pi^2}\sum_{d>D}|g(d)|=\mathcal O_{\varepsilon}(D^{-1+\varepsilon})\label{quality-approximation-xD(s)}
\ee
for every $\varepsilon>0$.

Let us consider the product of two 
 eigenfunctions $\tilde\theta_{\lambda_1
 }$ and $\tilde\theta_{\lambda_2
 }$.  We have the following
 \begin{theorem}\label{product-eigenfunctions} Let $\lambda_1=e\!\left(\frac{l_1}{d_1^2}\right), \lambda_2=e\!\left(\frac{l_2}{d_2^2}\right)\in\Lambda$. Then
 \be\tilde\theta_{\lambda_1
 }\tilde\theta_{\lambda_2
 }=
\epsilon\,\tilde\theta_{\lambda 
 },\ee
 where $\lambda=e\!\left(\frac{l}{d^2}\right)=\lambda_1\lambda_2
 $ and $\epsilon=\epsilon(\lambda_1,\lambda_2)=\mu(d_1)\mu(d_2)\mu(d)=\pm 1$. 
 \end{theorem}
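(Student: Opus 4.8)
The plan is to compute the product $\tilde\theta_{\lambda_1}\tilde\theta_{\lambda_2}$ directly by using the explicit formula for the eigenfunctions as averages, established in the proof of Theorem~\ref{existence-of-eigenfunctions}, and then identify the result with a scalar multiple of $\tilde\theta_\lambda$ via the inner product computations already carried out. The first observation is that $\tilde\theta_{\lambda_1}\tilde\theta_{\lambda_2}$, being a product of two $U$-eigenfunctions with eigenvalues $\lambda_1,\lambda_2$, is itself a $U$-eigenfunction with eigenvalue $\lambda_1\lambda_2=\lambda$. Since $\lambda\in\Lambda$ (one must check $\Lambda$ is a group, which was noted in Section~\ref{subsec-corr-fun}) and the eigenspace for $\lambda$ is one-dimensional — this follows from ergodicity, or alternatively from Proposition~\ref{eigenfuctions-basis-H} together with the fact that each $\theta_\lambda$ spans its atom — we immediately get $\tilde\theta_{\lambda_1}\tilde\theta_{\lambda_2}=\epsilon\,\tilde\theta_\lambda$ for some scalar $\epsilon$ of modulus one. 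It remains only to pin down $\epsilon$.

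To evaluate $\epsilon$, I would take the inner product of both sides with $\tilde\theta_\lambda$, or more conveniently with $x(s)$ for a well-chosen $s$, and use Theorem~\ref{thm-innerproduct-theta_l,d^2-x(s)}. A cleaner route: compute $\langle \tilde\theta_{\lambda_1}\tilde\theta_{\lambda_2}, x(0)\rangle$ by expanding one factor, say $\tilde\theta_{\lambda_1}=\|\theta_{\lambda_1}\|^{-1}\lim_N \frac1N\sum_n \lambda_1^{-n}x(n)$, and recognizing that $\langle \tilde\theta_{\lambda_2}x(n), x(0)\rangle$ leads to a double average of the triple correlation $c_3$. Concretely, $\epsilon = \|\theta_\lambda\|^{-1}\langle \tilde\theta_{\lambda_1}\tilde\theta_{\lambda_2},\theta_\lambda\rangle$, and expanding all three normalized eigenfunctions as limits of averages of $x(n)$'s produces exactly the quantity $\frak Y_3(\lambda_1^{-1},\lambda_2^{-1})$ or a close variant, divided by the three norms $\|\theta_{\lambda_1}\|\,\|\theta_{\lambda_2}\|\,\|\theta_\lambda\|$. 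By Lemma~\ref{lem-sums-new} this equals $g(d_1)g(d_2)g(d)$, and by Theorem~\ref{norm-eigenfunctions} the norms are $|g(d_1)|,|g(d_2)|,|g(d)|$. Recalling $g(s)=\frac{6}{\pi^2}\mu(s)\prod_{p|s}\frac{1}{p^2-1}$ from \eqref{function-g-Hall}, the ratio $\dfrac{g(d_1)g(d_2)g(d)}{|g(d_1)|\,|g(d_2)|\,|g(d)|}=\operatorname{sgn}(g(d_1))\operatorname{sgn}(g(d_2))\operatorname{sgn}(g(d))=\mu(d_1)\mu(d_2)\mu(d)$, which is the claimed $\epsilon$.

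The one point requiring care — and the main obstacle — is justifying the interchange of the (two independent) limits with the product when multiplying $\tilde\theta_{\lambda_1}$ by $\tilde\theta_{\lambda_2}$ and then pairing against $x(0)$: one must produce the \emph{two-fold} limit $\lim_{N_1,N_2\to\infty}\frac{1}{N_1N_2}\sum_{n_1,n_2}\lambda_1^{-n_1}\lambda_2^{-n_2}\langle x(n_1)x(n_2),x(0)\rangle$, identify $\langle x(n_1)x(n_2),x(0)\rangle$ with $c_3$ of the appropriate arguments (translating by $-0$, using shift-invariance of $\Pi$ and the fact that the $x(n_i)$ are $\{0,1\}$-valued so the triple product's expectation is $c_3$), and invoke Lemma~\ref{lem-sums-new}. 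The convergence of each one-parameter average in $\mathcal H$ is guaranteed by the von~Neumann ergodic theorem as in \eqref{def-eigenfunction-theta_l,d^2}, and convergence of the iterated average is exactly the content of Lemma~\ref{lem-sums-new}; combining these to legitimately pass to the limit inside the inner product is routine once one notes the averages are uniformly bounded in $\mathcal H$ (being averages of the $\{0,1\}$-valued $x(n)$). A final bookkeeping step checks that $\lambda_1\lambda_2=\lambda$ with the stated representative $e(l/d^2)$ is consistent — i.e. $d=\operatorname{lcm}$-type data is exactly what Lemma~\ref{lem-sums-new} records — so no ambiguity arises in the sign $\mu(d)$.
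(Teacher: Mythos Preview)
Your approach is essentially the paper's: expand $\theta_{\lambda_1}\theta_{\lambda_2}$ as a double ergodic average, pair against $x(s)$, identify the triple expectation as $c_3(n_1-s,n_2-s)$, apply Lemma~\ref{lem-sums-new} to get $\lambda^{-s}g(d_1)g(d_2)g(d)$, compare with $\langle\theta_\lambda,x(s)\rangle=\lambda^{-s}g(d)^2$ from Theorem~\ref{thm-innerproduct-theta_l,d^2-x(s)}, and divide by the norms from Theorem~\ref{norm-eigenfunctions} to read off $\epsilon=\mu(d_1)\mu(d_2)\mu(d)$.

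One logical caution: in the paper's order, ergodicity (Theorem~\ref{thm-ergodicity}) and $H=\mathcal H$ (Proposition~\ref{H=mathcalH}) are established \emph{after} this theorem and in fact rely on it, so you cannot appeal to simplicity of the $\lambda$-eigenspace in $\mathcal H$ here; the paper sidesteps this by checking the inner products with \emph{all} $x(s)$ agree (up to the constant $\epsilon$), which is your ``cleaner route'' and suffices since the $x(s)$ span $H$.
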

 \begin{proof}
 It is enough to show that for every $s\in\Z$ we have
\be\left\langle\tilde\theta_{\lambda_1}\tilde\theta_{\lambda_2},x(s)\right\rangle=\left\langle\tilde\theta_\lambda,x(s)\right\rangle\nonumber\ee 
Using the definition (\ref{def-eigenfunction-theta_l,d^2}) we can write
\be 
\theta_{\lambda_1
}
\theta_{\lambda_2
}=\lim_{\scriptsize{\ba{c}N_1\to\infty\\ N_2\to\infty\ea}}\frac{1}{N_1N_2}\sum_{n=1}^{N_1}\sum_{n=2}^{N_2} 
\lambda_1^{-n_1}\lambda_2^{-n_2}
x(n_1)x(n_2)\nonumber\ee
and thus
\be
\left\langle \theta_{\lambda_1
}\theta_{\lambda_2
},x(s)\right\rangle=\lim_{\scriptsize{\ba{c}N_1\to\infty\\ N_2\to\infty\ea}}\frac{1}{N_1N_2}\sum_{n=1}^{N_1}\sum_{n=2}^{N_2} \lambda_1^{-n_1}\lambda_2^{-n_2}
\left\langle x(n_1)x(n_2),x(s)\right\rangle\nonumber.
\ee
Notice that $\left\langle x(n_1)x(n_2),x(s)\right\rangle=c_3(n_1-s,n_2-s)$. Therefore, by Lemma \ref{lem-sums-new},
\bey
\left\langle \theta_{\lambda_1
}\theta_{\lambda_2
},x(s)\right\rangle&=&\lim_{\scriptsize{\ba{c}N_1\to\infty\\ N_2\to\infty\ea}}\frac{1}{N_1N_2}\sum_{n=1}^{N_1}\sum_{n=2}^{N_2} 
 \lambda_1^{-n_1}\lambda_2^{-n_2}
c_3(n_1-s,n_2-s)=\nonumber\\
&=&\lambda^{-s} 
\lim_{\scriptsize{\ba{c}N_1\to\infty\\ N_2\to\infty\ea}}\frac{1}{N_1N_2}\sum_{n=1}^{N_1}\sum_{n=2}^{N_2} \lambda_1^{-n_1}\lambda_2^{-n_2}
c_3(n_1,n_2)=\nonumber\\
&=&\lambda^{-s}
\,\frak Y_{3}(\lambda_1^{-1},\lambda_2^{-1}
)=\lambda^{-s}\,g(d_1)g(d_2)g(d).\nonumber
\eey
On the other hand, by Theorem \ref{thm-innerproduct-theta_l,d^2-x(s)},
\be
\left\langle \theta_{\lambda
}, x(s)\right\rangle=e^{-\lambda}\,
\frak Y_2(\lambda
)=\lambda^{-s}\,g(d)^2.\nonumber 
\ee
Therefore
\be\epsilon= \left\langle \tilde\theta_{\lambda_1}\tilde\theta_{\lambda_2},x(s)\right\rangle\left\langle \tilde\theta_\lambda,x(s)\right\rangle^{-1}=\frac{g(d_1)g(d_2)g(d)}{|g(d_1)|\,|g(d_2)|}\frac{|g(d)|}{g(d)^2}=\mu(d_1)\mu(d_2)\mu(d).\nonumber\ee
 \end{proof}
By associativity of multiplication, $\epsilon(\lambda_1,\lambda_2)\epsilon(\lambda_1\lambda_2,\lambda_3)=\epsilon(\lambda_2,\lambda_3)\epsilon(\lambda_1,\lambda_2\lambda_3)$.
Theorem \ref{product-eigenfunctions} can be applied iteratively. It allows us to write all polynomial expressions in the  eigenfunctions as linear expressions, and this is a very important fact.

We want to show that the set of eigenfunctions $\{\theta_\lambda\}_{\lambda\in\Lambda}$ is a basis for the whole space $\mathcal H$. We shall need the notion of 
\emph{unitary rings} introduced by V.A. Rokhlin (see \cite{Rohlin-1948}).

%
\begin{defin}
A complex Hilbert space $\frak H$ is called a \emph{unitary ring} if and only if, for certain pairs of elements, a product is defined satisfying:
\begin{itemize}
\item[(I)] if $fg$ is defined, then $fg=gf$,
\item[(II)] if $fg$, $(fg)h$ and $gh$ are defined, then $(fg)h=f(gh)$,
\item[(III)] if $fh$ and $gh$ are defined and $\alpha,\beta\in\C$, then $(\alpha f+\beta g)h=\alpha(fh)+\beta(gh)$,
\item[(IV)] there exists $e\in \frak H$ such that $e f=f$ for every $f\in \frak H$,
\item[(V)] if $f_n g$ are defined and $f_n\to f$, $f_ng\to h$, then $fg=h$.
\item[(VI)] The set $\frak M=\{f\in\frak H:\: \mbox{$fg$ is defined for all $g\in\frak H$} \}$ is dense in $\frak H$; moreover if $fg$ is defined, then there exist $f_n\in\frak M$ such that $f_n\to f$ and $f_n g\to fg$, 
\item[(VII)] for every $f\in\frak H$, there exists $\bar f\in\frak H$ such that $\langle fg,h\rangle=\langle g,\bar f h\rangle$ for all $f,g\in\frak M$.
\end{itemize}
\end{defin}
An important result by Rokhlin is that every unitary ring  can be written as $\frak H=L^2(M,\mathcal M,m)$, where $(M,\mathcal M, m)$  is a Lebesgue space (see, e.g., V.A. Rokhlin\footnote{The notions of \emph{Lebesgue space} used here allows point with positive measure, contrary to the classical case discussed by Ya.G. Sinai \cite{Sinai-topics-in-et} in the context of K-systems} \cite{Rohlin-1947}).
In our case we have the unitary ring $\mathcal H=L^2(X,\mathcal B,\Pi)$ and the subspace $H$ which is a sub-ring because of Theorem \ref{product-eigenfunctions}.
In this representation a subring $\frak R\subset\frak H$ corresponds to a $\sigma$-subalgebra $\mathcal N$ of $\mathcal M$, i.e. $\frak R=L^2(M,\mathcal N, m|_{\mathcal N})$. Therefore $H$ is a subspace of $\mathcal H$, which is a Hilbert space corresponding to some $\sigma$-subalgebra $\mathcal F$ of $\mathcal B$. Let us show that


\begin{prop}\label{H=mathcalH}
Up to sets of measure zero, 
$\mathcal F=\mathcal B$. In other words, $H=\mathcal H$.
\end{prop}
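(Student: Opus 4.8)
The plan is to show that the $\sigma$-subalgebra $\mathcal F\subseteq\mathcal B$ corresponding to $H$ separates points of $X$ up to $\Pi$-null sets, whence $\mathcal F=\mathcal B$ modulo $\Pi$. The key observation is that $H$ is a $T$-invariant, conjugation-closed subring of $\mathcal H$ (the ring structure is exactly Theorem~\ref{product-eigenfunctions}, iterated), so in the Rokhlin representation $\mathcal H=L^2(M,\mathcal M,m)$ the subspace $H$ is $L^2(M,\mathcal F,m|_{\mathcal F})$ for an honest $\sigma$-subalgebra $\mathcal F$. To identify $\mathcal F$ with $\mathcal B$ it suffices to prove that every indicator of a cylinder set lies in $H$, and for that it is enough that each coordinate function $x(s)$ and each finite product $x(s_1)x(s_2)\cdots x(s_k)$ belongs to $H$.

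First I would record that $x(s)\in H$ by definition of $H$. Next, the crucial step: I would show that $H$ is closed under (pointwise) multiplication of the generators, i.e. that $x(s_1)x(s_2)\in H$, and more generally that all finite products $x(s_1)\cdots x(s_k)\in H$. The case $k=2$ follows by writing $x(s_1)=\sum_{\lambda_1}\langle x(s_1),\tilde\theta_{\lambda_1}\rangle\tilde\theta_{\lambda_1}$ and similarly for $x(s_2)$, then using Theorem~\ref{product-eigenfunctions} to expand $\tilde\theta_{\lambda_1}\tilde\theta_{\lambda_2}=\epsilon(\lambda_1,\lambda_2)\tilde\theta_{\lambda_1\lambda_2}$; one must check that the resulting double series converges in $\mathcal H$ to the function $x(s_1)x(s_2)\in\mathcal H$, which is where the quantitative decay $\|x(s)-x_D(s)\|^2=\mathcal O_\varepsilon(D^{-1+\varepsilon})$ from~(\ref{quality-approximation-xD(s)}) does the work: approximating each factor by its finite truncation $x_D(s_j)$ and using that $x_D(s_1)x_D(s_2)$ is a finite linear combination of eigenfunctions (hence in $H$) together with the uniform bound $\|x_D(s)\|\le\|x(s)\|$, one gets $x_{D}(s_1)x_{D}(s_2)\to x(s_1)x(s_2)$ in $L^2$, and since $H$ is closed, the limit lies in $H$. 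For higher $k$ one iterates, using associativity $\epsilon(\lambda_1,\lambda_2)\epsilon(\lambda_1\lambda_2,\lambda_3)=\epsilon(\lambda_2,\lambda_3)\epsilon(\lambda_1,\lambda_2\lambda_3)$ and induction, again with the decay estimate controlling convergence of the multi-fold series.

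Having established that every monomial $x(s_1)\cdots x(s_k)$ lies in $H$, I would then argue that indicators of cylinder sets $\{x:\,x(k_0)=\epsilon_0,\ldots,x(k_r)=\epsilon_r\}$ with $\epsilon_j\in\{0,1\}$ are finite polynomials in the $x(s)$'s (since $x(s)^2=x(s)$ and $1-x(s)$ handles the zeros; the constant function $1$ is the ring identity $e$, which lies in $H$ because $1=\tilde\theta_{1}\cdot\|\theta_1\|$ up to normalization, the eigenfunction $\theta_1$ for $\lambda=1$ being a nonzero constant by von Neumann's ergodic theorem and ergodicity). Therefore all cylinder indicators are in $H$; since cylinders generate $\mathcal B$ and $H$ is a closed subspace corresponding to $\mathcal F$, we get $\mathcal F\supseteq\mathcal B$ modulo $\Pi$-null sets, hence $\mathcal F=\mathcal B$ and $H=\mathcal H$.

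\textbf{Main obstacle.} The delicate point is the $L^2$-convergence of the multi-fold expansion of a product of coordinate functions into eigenfunctions: Theorem~\ref{product-eigenfunctions} only tells us how to multiply two eigenfunctions, so expanding $x(s_1)x(s_2)$ requires interchanging an infinite sum with a product, and a priori the rearranged series could fail to converge or converge to the wrong limit. This is precisely why the quantitative estimate~(\ref{quality-approximation-xD(s)}) — equivalently the summability $\sum_d|g(d)|<\infty$ with a power-saving rate — is needed rather than mere $L^2$-membership of each $x(s)$; it lets one truncate, multiply finitely, and pass to the limit inside the closed subspace $H$. Once that convergence is in hand, the rest (closure of $H$ under products, generation of cylinders, identification of $\sigma$-algebras via Rokhlin) is routine.
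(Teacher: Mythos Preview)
Your approach is correct in outline and is essentially the alternative route the paper itself sketches in the Remark following Theorem~\ref{thm-ergodicity}: show directly that every monomial $x(s_1)\cdots x(s_k)$ lies in $H$ via the product formula for eigenfunctions and the truncation estimate~(\ref{quality-approximation-xD(s)}), then conclude that cylinder indicators (hence all of $\mathcal B$) are captured. The paper's own proof of Proposition~\ref{H=mathcalH} is packaged differently --- it invokes Rokhlin's theory of measurable partitions and argues by contradiction --- but it rests on exactly the same two ingredients (Theorem~\ref{product-eigenfunctions} and~(\ref{quality-approximation-xD(s)})), so the substantive content is the same. Your version has the advantage of being self-contained and avoiding the measurable-partition machinery.

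Two points need repair. First, your justification that the constant $1\in H$ appeals to ergodicity, but ergodicity (Theorem~\ref{thm-ergodicity}) is proved \emph{after} Proposition~\ref{H=mathcalH} and depends on it, so this is circular. The fix is immediate from Theorem~\ref{product-eigenfunctions}: taking $\lambda_1=\lambda_2=1$ gives $\tilde\theta_1^2=\tilde\theta_1$, so $\tilde\theta_1$ is $\{0,1\}$-valued a.e., and $\|\tilde\theta_1\|_2=1$ forces $\tilde\theta_1=1$ a.e.; the same argument with $\lambda_2=\bar\lambda_1$ gives $|\tilde\theta_\lambda|=1$ a.e.\ for every $\lambda\in\Lambda$.

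Second, the step ``$x_D(s_1)x_D(s_2)\to x(s_1)x(s_2)$ in $L^2$'' does not follow from the $L^2$ bound $\|x_D(s)\|\le\|x(s)\|$ alone: $L^2$-convergence of factors with only $L^2$ control on the other factor does not give $L^2$-convergence of the product, and in fact $\|x_D(s)\|_\infty$ is not uniformly bounded in $D$. The clean argument is asymmetric: since $|\tilde\theta_\lambda|=1$ a.e., multiplication by $\tilde\theta_\lambda$ is an isometry of $L^2$ preserving $H$, so $h\in H\Rightarrow h\tilde\theta_\lambda\in H$; then for $h\in H$ with $|h|\le 1$ one has $h\cdot x_D(s)\in H$ (finite sum) and $h\cdot x_D(s)\to h\cdot x(s)$ in $L^2$ by dominated convergence, whence $h\cdot x(s)\in H$. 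Starting from $h=x(s_1)$ and iterating gives all monomials. With these two adjustments your proof goes through.
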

\begin{proof}
Let us use the technique of measurable partitions by Rokhlin (see \cite{Rohlin-1949}). According to it $\mathcal F$ corresponds to some measurable partition $\xi$ of $X$. If $\mathcal F\subsetneq\mathcal B$, then there exists a bounded, non-negative function $h(x)$ and a subset $A\in\mathcal F$ such that $\mathbb E(h|C_\xi)\geq \alpha$ for almost all $C_\xi\in A$ and some positive $\alpha$.
As any measurable function $h$ can be approximated arbitrarily well in $L^\infty(X,\mathcal F,\Pi|_{\mathcal F})$ sense by a function $h'$ which is a polinomial in the $x(s)$'s. Using (\ref{quality-approximation-xD(s)}) we can approximate $h'$ in the measure sense by a finite polynomial in the  eigenfunctions $\theta_\lambda$.
However, every such polynomial belongs to our Hilbert space $L^2(X,\mathcal F,\Pi|_{\mathcal F})$ and it is measurable with respect to $\mathcal F$. Therefore the conditional expectation of $h'$ with respect to $\xi$ is arbitrary close to $h'$, but such  a function cannot approximate $h$ in measure. This shows that $H=\mathcal H$.
\end{proof}
Propositions \ref{H=mathcalH} and \ref{eigenfuctions-basis-H} immediately give the following
\begin{cor}\label{cor-eigenfunctions-basis-mathcalH}
The set of eigenfunctions $\{\theta_\lambda\}_{\lambda\in\Lambda}$ is a basis in the space $\mathcal H$. 
\end{cor}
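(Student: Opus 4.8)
The final statement to prove is Corollary~\ref{cor-eigenfunctions-basis-mathcalH}: the set of eigenfunctions $\{\theta_\lambda\}_{\lambda\in\Lambda}$ is a basis in $\mathcal H$. The plan is to deduce this immediately from the two preceding results, Proposition~\ref{eigenfuctions-basis-H} and Proposition~\ref{H=mathcalH}, together with the orthogonality of the eigenfunctions. There is essentially no new content to produce here; this is a bookkeeping corollary, and the proof should be one or two sentences.

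Concretely, I would argue as follows. By Proposition~\ref{eigenfuctions-basis-H}, the family $\{\theta_\lambda\}_{\lambda\in\Lambda}$ is a (orthogonal) basis for the subspace $H\subseteq\mathcal H$. By Proposition~\ref{H=mathcalH}, we have $H=\mathcal H$ (up to sets of measure zero). Combining these two facts, $\{\theta_\lambda\}_{\lambda\in\Lambda}$ is a basis for $\mathcal H$ itself. One may add, for clarity, that the $\theta_\lambda$ are pairwise orthogonal because $U$ is unitary and they are eigenfunctions with distinct eigenvalues $\lambda\in\Lambda$ (as noted just after Theorem~\ref{existence-of-eigenfunctions}), so the basis is in fact orthogonal, and after normalizing one obtains the orthonormal basis $\{\tilde\theta_\lambda\}_{\lambda\in\Lambda}$ already introduced.

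The only step that carries any weight at all is making sure the logical chain is stated in the right order: completeness of $\{\theta_\lambda\}$ in $H$ is Proposition~\ref{eigenfuctions-basis-H}, and the identification $H=\mathcal H$ is Proposition~\ref{H=mathcalH}; neither is reproved here. There is no genuine obstacle — the hard analytic work (the explicit computation of $\langle x(s),\theta_\lambda\rangle$ via Lemma~\ref{lem-expo-sum-c_2}, the norm computation, the ring structure of Theorem~\ref{product-eigenfunctions}, and the measurable-partition argument of Proposition~\ref{H=mathcalH}) has all been done already. So the proof is simply: ``\emph{This is an immediate consequence of Propositions~\ref{eigenfuctions-basis-H} and \ref{H=mathcalH}.}'' possibly expanded to a single displayed line of reasoning as above. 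If anything, I would spend the extra sentence emphasizing the payoff: since $\{\theta_\lambda\}_{\lambda\in\Lambda}$ is a basis of $\mathcal H$ consisting of eigenfunctions of $U$ with eigenvalues exactly $\Lambda$, the operator $U$ has pure point spectrum equal to $\Lambda$, which is precisely Theorem~\ref{thm-spectrum-of-U-is-Lambda} and hence part~(i) of the Main Theorem — though strictly that remark belongs to the discussion following the corollary rather than inside its proof.
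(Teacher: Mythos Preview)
Your proposal is correct and matches the paper's own treatment exactly: the paper states that Propositions~\ref{eigenfuctions-basis-H} and~\ref{H=mathcalH} ``immediately give'' the corollary, with no further argument. Your additional remarks on orthogonality and the consequence for Theorem~\ref{thm-spectrum-of-U-is-Lambda} are accurate and consistent with how the paper proceeds after the corollary.
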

This fact, together with Theorem \ref{existence-of-eigenfunctions} and Corollary \ref{cor-eigenfunctions-nontrivial} yields Theorem \ref{thm-spectrum-of-U-is-Lambda}. It also implies the following 
\begin{theorem}\label{thm-ergodicity}
The dynamical system $(X,\mathcal B,\Pi,T)$ is ergodic.
\end{theorem}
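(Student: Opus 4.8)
The plan is to deduce ergodicity directly from the spectral picture established above, specifically from Corollary \ref{cor-eigenfunctions-basis-mathcalH}, which asserts that the eigenfunctions $\{\theta_\lambda\}_{\lambda\in\Lambda}$ form a basis of $\mathcal H = L^2(X,\mathcal B,\Pi)$. Recall that a measure-preserving system is ergodic if and only if the only $U$-invariant functions in $\mathcal H$ are the constants, equivalently, if and only if the eigenvalue $1$ of the Koopman operator $U$ is simple.

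First I would observe that the constant function $\mathbf 1$ is itself an eigenfunction with eigenvalue $1$: since $\lambda = e(0/1^2) = 1 \in \Lambda$ (take $d=1$, $l=0$), the element $1 \in \Lambda$ is realized, and by Theorem \ref{existence-of-eigenfunctions} there is a corresponding eigenfunction $\theta_1$; in fact one checks from the construction (\ref{def-eigenfunction-theta_l,d^2}) that $\theta_1(x) = \lim_{N\to\infty}\frac1N\sum_{n=1}^N x(n)$, which by the von Neumann ergodic theorem is the conditional expectation onto the invariant $\sigma$-algebra; and by Theorem \ref{thm-innerproduct-theta_l,d^2-x(s)} we have $\langle x(s),\theta_1\rangle = g(1)^2 = (6/\pi^2)^2 \neq 0$, so $\theta_1$ is a nonzero constant (being invariant and lying in the closure of linear combinations of the $x(s)$, hence measurable with the right normalization). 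Second, suppose $f \in \mathcal H$ satisfies $Uf = f$. Expand $f = \sum_{\lambda\in\Lambda} c_\lambda \theta_\lambda$ in the basis from Corollary \ref{cor-eigenfunctions-basis-mathcalH}. Applying $U$ and using $U\theta_\lambda = \lambda\theta_\lambda$ gives $\sum_\lambda c_\lambda \lambda \theta_\lambda = \sum_\lambda c_\lambda \theta_\lambda$; since the $\theta_\lambda$ are linearly independent (indeed mutually orthogonal), $c_\lambda(\lambda - 1) = 0$ for every $\lambda$, so $c_\lambda = 0$ whenever $\lambda \neq 1$. Hence $f = c_1 \theta_1$ is a scalar multiple of $\theta_1$, i.e.\ constant. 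Therefore the invariant functions are exactly the constants, and $(X,\mathcal B,\Pi,T)$ is ergodic.

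The one point requiring a little care is the simplicity of the eigenvalue $1$ within $\Lambda$ itself, i.e.\ that there is a \emph{unique} element of $\Lambda$ equal to $1$ and hence a one-dimensional eigenspace: this is exactly the remark made after the definition of $\Lambda$ in Section \ref{subsec-corr-fun}, that every element of $\Lambda$ is represented uniquely, so the eigenfunctions $\theta_\lambda$ are genuinely indexed by distinct points of $\mathbb S^1$ and there is no hidden multiplicity. Given that, the argument is essentially a one-line consequence of the basis property. I do not expect a genuine obstacle here; the substantive work has already been done in establishing that $\{\theta_\lambda\}$ spans $\mathcal H$ (Propositions \ref{eigenfuctions-basis-H} and \ref{H=mathcalH}) and that each $\theta_\lambda$ is nonzero (Corollary \ref{cor-eigenfunctions-nontrivial}). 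The proof therefore amounts to: (i) note $1\in\Lambda$ with $\theta_1$ the nonzero constant, (ii) expand an arbitrary invariant $f$ in the eigenbasis, (iii) match eigenvalues to kill all terms with $\lambda\neq1$.
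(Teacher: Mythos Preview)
Your proof is correct and follows essentially the same approach as the paper's: both argue that the eigenspace for the eigenvalue $1$ is one-dimensional (because $\{\theta_\lambda\}_{\lambda\in\Lambda}$ is an orthogonal basis of $\mathcal H$ with each $\lambda$ occurring once) and contains the constants, whence every invariant function is constant. The paper compresses this into two lines by invoking Theorem \ref{thm-spectrum-of-U-is-Lambda}, whereas you spell out the expansion in the eigenbasis; one tiny cleanup is that the cleanest way to see $\theta_1$ is constant is to note that the constant $\mathbf 1$ is invariant and hence, by your own expansion argument, proportional to $\theta_1$.
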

\begin{proof}
By shift-invariance of $\Pi$ we already know that the eigenspace spanned by the constants is at least one-dimensional.
On the other hand, by Theorem \ref{thm-spectrum-of-U-is-Lambda}, its dimension cannot be bigger than one. 
This implies that the only invariant functions are  constants $\Pi$-almost everywhere, and hence we have ergodicity.
\end{proof}
Theorems \ref{thm-spectrum-of-U-is-Lambda} and \ref{thm-ergodicity} give part (i) or our Main Theorem.
\begin{remark}
One could also derive Corollary \ref{cor-eigenfunctions-basis-mathcalH} in a different way and without using the theory of unitary rings and measurable partitions by Rokhlin. The derivation, although explicit, is rather long. In fact, one can show that for every $-\infty<s_1<\ldots<s_r<\infty$ the product $x(s_1)\cdots x(s_r)$ belongs to the span of $\{\theta_\lambda\}_{\lambda\in\Lambda}$. For example, for $r=2$, by Theorems \ref{thm-innerproduct-theta_l,d^2-x(s)} and \ref{product-eigenfunctions},
\bey
x(s_1)x(s_2)&=&\left(\sum_{\lambda_1\in\Lambda}\left\langle x(s_1),\tilde\theta_{\lambda_1}\right\rangle\tilde\theta_{\lambda_1}\right)\left(\sum_{\lambda_2\in\Lambda}\left\langle x(s_2),\tilde\theta_{\lambda_2}\right\rangle\tilde\theta_{\lambda_2}\right)=\nonumber\\
&=&\sum_{\lambda_1,\lambda_2\in\Lambda}\lambda_1^{s_1}\lambda_2^{s_2} 
g(d_1) g(d_2)\mu(d)\tilde\theta_{\lambda_1\lambda_2}=\nonumber\\
&=&\sum_{\lambda\in\Lambda}\left(\mu(d)\sum_{\scriptsize{\ba{c}\lambda_1,\lambda_2\in\Lambda\\ \lambda_1\lambda_2=\lambda\ea}}
\lambda_1^{s_1}\lambda_2^{s_2}
g(d_1) g(d_2)\right)\tilde\theta_\lambda,\nonumber
\eey
and one can prove that
\be 
\sum_{\scriptsize{\ba{c}\lambda_1,\lambda_2\in\Lambda\\ \lambda_1\lambda_2=\lambda\ea}}
\left|g(d_1) g(d_2)\right|=\mathcal O_\varepsilon(d^{-2+\varepsilon}) 
\nonumber\ee
for every $\varepsilon>0$,
where $\lambda=e\!\left(\frac{l}{d^2}\right)$. This implies that 
\be\sum_{\lambda\in\Lambda}\left|\sum_{\scriptsize{\ba{c}\lambda_1,\lambda_2\in\Lambda\\ \lambda_1\lambda_2=\lambda\ea}}
\lambda_1^{s_1}\lambda_2^{s_2}
g(d_1) g(d_2) \right|^2\nonumber
\ee is finite.
\end{remark}

\section{Spectral analysis for $(\mathbb G,\mathbb B,\mathbb P,\mathbb T_{\mathbf u})$}\label{sec-spectrum-of-G}
Recall the group $\mathbb G$ defined in (\ref{groupG}).
Let us consider the space $\mathbb H=L^2(\mathbb G,\mathbb B, \mathbb P)$, and the unitary operator $\mathbb U$ on $\mathbb H$ defined by
\be(\mathbb U f)(\mathbf g)=f(\mathbf g+(1,1,1,\ldots)).\nonumber\ee
\begin{theorem}\label{spectrum-U-forG}
The spectrum of $\mathbb U$ is given by $\Lambda$.
\end{theorem}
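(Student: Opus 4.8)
The plan is to compute the spectrum of $\mathbb{U}$ directly using the harmonic analysis on the compact abelian group $\mathbb{G}$, since translations on such groups have their spectral theory completely determined by the character group. Recall that $\hat{\mathbb{G}} \cong \Lambda$ (this was established in Section \ref{subsec-corr-fun}), so the characters of $\mathbb{G}$ are naturally indexed by $\lambda \in \Lambda$; write $\chi_\lambda : \mathbb{G} \to \mathbb{S}^1$ for the character corresponding to $\lambda = e(l/d^2)$. By the Peter--Weyl / Pontryagin theory, the family $\{\chi_\lambda\}_{\lambda \in \Lambda}$ is an orthonormal basis of $\mathbb{H} = L^2(\mathbb{G}, \mathbb{B}, \mathbb{P})$, since $\mathbb{P}$ is the Haar measure.

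The key computation is that each $\chi_\lambda$ is an eigenfunction of $\mathbb{U}$. Indeed, since $\chi_\lambda$ is a homomorphism,
\be
(\mathbb{U}\chi_\lambda)(\mathbf{g}) = \chi_\lambda(\mathbf{g} + \mathbf{u}) = \chi_\lambda(\mathbf{g})\,\chi_\lambda(\mathbf{u}),
\ee
so $\chi_\lambda$ is an eigenfunction with eigenvalue $\chi_\lambda(\mathbf{u})$. It remains to identify this eigenvalue. Using the explicit description of the isomorphism $\hat{\mathbb{G}} \cong \Lambda$ via the representation (\ref{representation-l/d^2}): if $\lambda = e(l/d^2)$ with $\mathcal{P}(d) = \{p_1,\ldots,p_m\}$ and $l/d^2 = l_1/p_1^2 + \cdots + l_m/p_m^2$, then $\chi_\lambda$ acts on $\mathbf{g} = (g_{p^2})_p$ by $\chi_\lambda(\mathbf{g}) = e\big(\sum_j l_j g_{p_j^2}/p_j^2\big)$. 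Evaluating at $\mathbf{u} = (1,1,1,\ldots)$ gives $\chi_\lambda(\mathbf{u}) = e\big(\sum_j l_j/p_j^2\big) = e(l/d^2) = \lambda$. Hence $\mathbb{U}\chi_\lambda = \lambda\,\chi_\lambda$ for every $\lambda \in \Lambda$.

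This shows that $\Lambda$ is contained in the point spectrum of $\mathbb{U}$, and since $\{\chi_\lambda\}_{\lambda \in \Lambda}$ is a complete orthonormal system of eigenfunctions, $\mathbb{U}$ has pure point spectrum with eigenvalue set exactly $\Lambda$; no other values can be in the spectrum because any $f \in \mathbb{H}$ expands in the basis $\{\chi_\lambda\}$ and $\mathbb{U}$ acts diagonally. I do not anticipate a genuine obstacle here — the only point requiring care is bookkeeping the identification of characters of $\mathbb{G}$ with elements of $\Lambda$ and checking that the eigenvalue at $\mathbf{u}$ comes out to be $\lambda$ itself (rather than, say, $\bar\lambda$ or some other element of the orbit), which is exactly what makes the translation $\mathbb{T}_{\mathbf{u}}$ — as opposed to a generic translation — have spectrum all of $\Lambda$. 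Together with the analysis of Section \ref{sec-spectrum-of-T}, this matching of spectra is what will let von Neumann's theorem deliver the isomorphism in part (ii) of the Main Theorem.
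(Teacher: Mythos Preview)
Your proof is correct and follows essentially the same approach as the paper: both use that the characters of $\mathbb G$ form an orthonormal basis of $\mathbb H$ and are eigenfunctions for the translation $\mathbb T_{\mathbf u}$, with eigenvalues given by evaluation at $\mathbf u$, hence the spectrum is exactly $\hat{\mathbb G}\cong\Lambda$. The only cosmetic difference is that the paper builds the characters by hand---starting from $\xi_{e(1/p^2)}(\mathbf g)=e(\pi_{p^2}(\mathbf g)/p^2)$, taking powers, and then products over different primes---whereas you invoke Pontryagin duality and the representation~(\ref{representation-l/d^2}) directly; the content is the same.
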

\begin{proof}
Consider the projection $\pi_{p^2}:\mathbb G\to\mathbb Z/p^2\mathbb Z$, $\pi_{p^2}(\mathbf g)=g_{p^2}$. It is immediate to see that the function $\xi_{e(1/p^2)}(\mathbf g)=e\!\left(\pi_{p^2}(\mathbf g)/p^2\right)$ is an eigenfunction for $\mathbb U$ with eigenvalue $e\!\left(\tfrac{1}{p^2}\right)$. By taking powers one can get any eigenfunction $\xi_{e(t/p^2)}$ with any eigenvalue $e\!\left(\tfrac{t}{p^2}\right)$ for $0\leq t\leq p^2-1$. By multiplying different such eigenfunctions (with different $p$), one can obtain eigenfunctions $\xi_{\lambda}$ with an arbitrary eigenvalue $\lambda\in\Lambda$.
Since $\Lambda$ is the character group of $\mathbb G$ and $\mathbb T_{\textbf u}$ is a translation in $\mathbb G$, then  there are no other eigenvalues. 
\end{proof}

To conclude the proof of part (ii) of our Main Theorem we need the following
\begin{theorem}[J. von Neumann, \cite{vonNeumann-1932}]\label{thm-von-Neumann}
Two ergodic measure-preserving transformations with pure point spectrum are isomorphic if and only if they have the same spectrum.
\end{theorem}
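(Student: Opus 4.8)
The forward implication is routine: an isomorphism $\phi$ of the two systems conjugates the corresponding Koopman operators through the unitary $f\mapsto f\circ\phi$, and unitarily equivalent operators have the same point spectrum. For the converse my plan is to produce, from the abstract eigenvalue data alone, a canonical model: I would show that \emph{any} ergodic system $(Z,\mathcal Z,m,R)$ with pure point spectrum whose group of eigenvalues is $\Lambda\subset\mathbb{S}^1$ is isomorphic to the Kronecker system $(\widehat\Lambda,\mathrm{Haar},R_\iota)$, where $\widehat\Lambda=\mathrm{Hom}(\Lambda,\mathbb{S}^1)$ is the Pontryagin dual, $\iota\in\widehat\Lambda$ is the inclusion $\Lambda\hookrightarrow\mathbb{S}^1$, and $R_\iota$ denotes translation by $\iota$. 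Since this model depends only on $\Lambda$, two systems with the same spectrum are both isomorphic to it, hence to each other. (As always one works with separable $L^2$, so $\Lambda$ is countable and $\widehat\Lambda$ is a compact metrizable group.)

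First I would record the elementary facts valid in an ergodic system with pure point spectrum: every eigenvalue is simple (the ratio of two eigenfunctions with the same eigenvalue is $R$-invariant, hence a.e.\ constant), every eigenfunction has a.e.\ constant modulus (its modulus is $R$-invariant), and $\Lambda$ is a subgroup of $\mathbb{S}^1$ (products and inverses of eigenfunctions are eigenfunctions). Normalizing all eigenfunctions to modulus $1$, the group $\mathcal E$ of modulus-$1$ eigenfunctions is abelian and sits in an extension $1\to\mathbb{S}^1\to\mathcal E\to\Lambda\to1$, the kernel being the constants. The crucial step is to \emph{split} this extension, i.e.\ to select eigenfunctions $f_\lambda$ satisfying $f_\lambda f_\mu=f_{\lambda\mu}$: this is possible because $\mathbb{S}^1$ is a divisible, hence injective, abelian group, so $\mathrm{Ext}^1_{\mathbb{Z}}(\Lambda,\mathbb{S}^1)=0$. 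I expect this trivialization of the multiplier cocycle (together with the routine verification that the selection can be made measurably, which is automatic when $\Lambda$ is countable) to be the only genuinely delicate point in the argument; everything after it is formal bookkeeping with Pontryagin duality.

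Having fixed a multiplicative family $\{f_\lambda\}_{\lambda\in\Lambda}$, I would define $\pi\colon Z\to\widehat\Lambda$ by $\pi(z)(\lambda)=f_\lambda(z)$; the homomorphism property puts $\pi(z)$ into $\widehat\Lambda$, and the eigenvalue relation $f_\lambda\circ R=\lambda f_\lambda$ becomes $\pi\circ R=R_\iota\circ\pi$, so $\pi$ is a factor map onto the Kronecker system. To upgrade it to an isomorphism I would argue as follows: the pushforward $\pi_*m$ is an $R_\iota$-invariant probability measure, and $R_\iota$ is uniquely ergodic because $\langle\iota\rangle$ is dense in $\widehat\Lambda$ (a character $\lambda\in\Lambda$ of $\widehat\Lambda$ trivial on $\langle\iota\rangle$ has $\lambda=\iota(\lambda)=1$), so $\pi_*m$ must be Haar measure; and since the system has pure point spectrum the $f_\lambda$ span $L^2(Z,m)$, while each $f_\lambda$ is the $\pi$-pullback of the evaluation character $\mathrm{ev}_\lambda$ on $\widehat\Lambda$, so $\pi$ generates $\mathcal Z$ modulo null sets and is therefore an isomorphism mod $0$. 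Applying this to the two ergodic pure-point systems in the hypothesis and composing the resulting isomorphisms with $(\widehat\Lambda,\mathrm{Haar},R_\iota)$ yields the asserted isomorphism; in the setting of the Main Theorem these are $(X,\mathcal B,\Pi,T)$ and $(\mathbb G,\mathbb B,\mathbb P,\mathbb T_{\mathbf u})$, both with common spectrum $\Lambda$.
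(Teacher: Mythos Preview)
The paper does not prove this theorem at all: it is stated as a classical result with a citation to von Neumann's 1932 paper and is simply invoked to deduce part (ii) of the Main Theorem from Theorems \ref{thm-spectrum-of-U-is-Lambda}, \ref{thm-ergodicity} and \ref{spectrum-U-forG}. So there is no proof in the paper against which to compare your proposal.

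That said, your argument is correct and is essentially the standard Halmos--von Neumann proof. The forward direction is indeed trivial. For the converse you correctly identify the only substantive step: splitting the central extension $1\to\mathbb{S}^1\to\mathcal E\to\Lambda\to1$ by choosing a multiplicative section $\lambda\mapsto f_\lambda$, which follows from the divisibility (injectivity) of $\mathbb{S}^1$. After that, the map $\pi(z)(\lambda)=f_\lambda(z)$ into $\widehat\Lambda$ intertwines $R$ with translation by $\iota$, pushes $m$ to Haar by unique ergodicity, and is a mod-$0$ isomorphism because the $f_\lambda$ span $L^2$ by the pure-point-spectrum hypothesis. One small remark: the density of $\langle\iota\rangle$ in $\widehat\Lambda$ that you use for unique ergodicity is equivalent, via duality, to the statement that no nontrivial $\lambda\in\Lambda$ satisfies $\lambda(\iota)=\iota(\lambda)=1$, which is immediate since $\iota$ is the inclusion; this is fine. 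Your proof would serve perfectly well as the omitted justification in the paper.
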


Theorems \ref{spectrum-U-forG} and \ref{thm-von-Neumann} imply that $(X,\mathcal B,\Pi,T)$ and $(\mathbb G,\mathbb B,\mathbb P,\mathbb T_{\mathbf u})$ are isomorphic as measure-preserving dynamical systems. This concludes the proof of our Main Theorem.

\appendix
\section{The Proof of Theorem \ref{thm1''}}\label{appendix-A}
This Appendix is dedicated to the  proof of Theorem \ref{thm1''}. It is based on the following identity
\be\left(\sum_{n=1}^\infty\frac{a(n)}{n^s}\right)\left(\sum_{n=1}^\infty \frac{b(n)}{n^s}\right)=\sum_{n=1}^\infty\frac{(a*b)(n)}{n^s},\label{product-formula}\ee
where $a*b$ is the Dirichlet convolution of $a$ and $b$: \be (a*b)(n)=\sum_{d|n}a(d)b\!\left(\tfrac{n}{d}\right).\label{dirichlet-convolution}\ee
We shall be considering only the case of $s=2$ and bounded sequences $a(n)$ and $b(n)$, therefore there will be no question about convergence of the above series. 
We shall also use the classical identity \be\sum_{d|n}\mu(d)=0,\hspace{.5cm}\mbox{for $n>1$}.\label{identity-divisors}\ee

First, let us consider the case of square-free numbers not divisible by a single prime $p$, i.e. $\mathcal S=\{p\}$. In this case, we shall prove Theorem \ref{thm1''} by means of three lemmata, and then we shall explain how to generalize this approach to general finite sets $\mathcal S$.

Let $w_p(n)$ be the indicator of the integers not divisible by $p$, i.e.
\be w_p(n)=\begin{cases}0,&\mbox{if $p|n$;}\\1,&\mbox{otherwise}.\end{cases}\nonumber\ee

\begin{lem}\label{nth-term}
\be\mu^2(n)w_p(n)=\sum_{d:\:d^2|n}\mu(d)w_p(d)w_p\!\left(\tfrac{n}{d}\right)\label{identity-nth-term}\ee
\end{lem}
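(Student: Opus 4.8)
The plan is to prove the identity (\ref{identity-nth-term}) by a direct combinatorial/multiplicative argument, treating both sides as arithmetic functions of $n$ and checking they agree. The left-hand side $\mu^2(n)w_p(n)$ is the indicator of the event ``$n$ is square-free and $p\nmid n$'', so the cleanest route is to show that the sum on the right-hand side collapses to exactly this indicator. First I would analyze the right-hand side: the outer sum runs over $d$ with $d^2\mid n$, i.e. over $d\in\mathcal D_2(n):=\{d:\:d^2|n\}$, and since $w_p(d)=1$ forces $p\nmid d$, we may restrict to such $d$; moreover $w_p(n/d)$ then equals $w_p(n)$ whenever $p\nmid d$ (because $p\mid n\iff p\mid d$ or $p\mid n/d$, and if $p\nmid d$ this is $p\mid n/d$). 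Hence the right-hand side equals $w_p(n)\sum_{d:\:d^2|n,\:p\nmid d}\mu(d)$.

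The heart of the matter is then to show $\sum_{d:\:d^2|n,\:p\nmid d}\mu(d)=\mu^2(n)$. I would write $n=p^a m$ with $p\nmid m$. A divisor $d$ with $d^2\mid n$ and $p\nmid d$ is exactly a divisor $d$ with $d^2\mid m$. Let $m=\prod_q q^{b_q}$ over primes $q\neq p$; then $d^2\mid m$ means $d=\prod_q q^{c_q}$ with $2c_q\le b_q$, i.e. $c_q\le\lfloor b_q/2\rfloor$, and $\mu(d)\neq0$ forces $c_q\in\{0,1\}$, so $d$ ranges over products of distinct primes $q$ with $b_q\ge2$. Writing $\mathcal P_2^{(p)}(m)=\{q\neq p:\:q^2|m\}$, the sum becomes $\sum_{\mathcal A\subseteq\mathcal P_2^{(p)}(m)}(-1)^{|\mathcal A|}$, which is $1$ if $\mathcal P_2^{(p)}(m)=\varnothing$ and $0$ otherwise, i.e. it equals the indicator that $m$ is square-free. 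Since $n=p^am$ with $p\nmid m$, the integer $n$ is square-free iff $a\le1$ and $m$ is square-free; but the quantity $w_p(n)$ out front already vanishes when $a\ge1$ unless $a=0\le1$... more carefully: when $a\ge1$ we have $w_p(n)=0$ so both sides are $0$; when $a=0$ we have $n=m$, $w_p(n)=1$, and both sides equal the indicator that $m=n$ is square-free, namely $\mu^2(n)$. This completes the verification in all cases.

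I do not anticipate a serious obstacle here; the only mild subtlety is organizing the case split on whether $p\mid n$ so that the factor $w_p(n)$ is handled consistently on both sides, and making sure the restriction ``$p\nmid d$'' imposed by $w_p(d)$ is used before invoking identity (\ref{identity-divisors}). An alternative, slightly slicker packaging is to observe that $\mu^2(n)w_p(n)=(\mu w_p)*w_p\,(n)$ restricted to the ``square'' part: indeed one can recognize (\ref{identity-nth-term}) as the coefficient identity coming from the Dirichlet series factorization $\sum_n \mu^2(n)w_p(n)n^{-s}=\prod_{q\neq p}(1+q^{-s})=\prod_{q\neq p}\frac{1-q^{-2s}}{1-q^{-s}}$, i.e. as $\big(\sum_d \mu(d)w_p(d)d^{-2s}\big)\big(\sum_e w_p(e)e^{-s}\big)$ with the first factor supported on squares; matching coefficients of $n^{-s}$ via (\ref{product-formula}) gives precisely the stated sum over $d$ with $d^2\mid n$. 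I would present the elementary argument as the main proof and perhaps remark on the Dirichlet-series interpretation, since the subsequent lemmata in the appendix will presumably use exactly this $\zeta$-function bookkeeping to extract the asymptotic density $\alpha(\mathcal S)/\zeta(2)$ and the error term.
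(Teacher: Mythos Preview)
Your proposal is correct and follows essentially the same route as the paper: both arguments case-split on whether $p\mid n$ (you package this by factoring out $w_p(n)$, the paper does it directly) and reduce to the classical identity $\sum_{d^2\mid n}\mu(d)=\mu^2(n)$. The only cosmetic difference is in how this last identity is verified: the paper writes $n=n_1n_2^2$ with $n_1$ square-free and observes that $d^2\mid n\iff d\mid n_2$, then applies (\ref{identity-divisors}), whereas you compute the sum directly as $\sum_{\mathcal A\subseteq\mathcal P_2(n)}(-1)^{|\mathcal A|}$.
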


\begin{proof}
If $p|n$, then $p|d$ or $p|\tfrac{n}{d}$ (possibly both) for every divisor $d$ of $n$. Thus $w_p(n)w_p\!\left(\tfrac{n}{d}\right)=0$ and the sum in the rhs of (\ref{identity-nth-term}) is 0 (and obviously equals the lhs).  If $p\nmid n$, then no divisor $d$ of $n$ will be divisible by $p$ and $w_p(d)=w_p\!\left(\tfrac{n}{d}\right)$=1. The sum in (\ref{identity-nth-term}) then becomes $\sum_{d^2|n}\mu(d)$. If $n$ is square-free, then $d^2=1$ is the only perfect square that divides $n$, and the sum equals 1 (and clearly agrees with the lhs of (\ref{identity-nth-term})). If $n$ is not square-free let us write it as $n_1 n_2^2$ where $n_1$ and $n_2$ are defined as follows. For every prime $p$ let us define $l=l(p,n)=\max\{j:\: p^j|n\}$; then set $n_1=\prod_{p:\: 2\nmid l}p$ and $n_2=\prod_{p:\: 2|l}p^{\frac{l}{2}}\cdot\prod_{p:\: 2\nmid l}p^{\frac{l-1}{2}}$. Since $n_1$ is square-free, if $d^2|n$, then $d|n_2$. This means that the sum in the rhs of (\ref{identity-nth-term}) becomes $\sum_{d|n_2}\mu(d)$ and equals 0 by (\ref{identity-divisors}) (thus matching the lhs). This concludes the proof of the Lemma. 
\end{proof}

\begin{lem}\label{lemma-sum-w_p}
\be\sum_{n=1}^\infty \frac{w_p(n)}{n^2}=\frac{p^2-1}{p^2}\zeta(2).\nonumber\ee
\end{lem}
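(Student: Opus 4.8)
The statement to prove is Lemma~\ref{lemma-sum-w_p}, namely that $\sum_{n=1}^\infty w_p(n)/n^2 = \frac{p^2-1}{p^2}\zeta(2)$.

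The plan is to split the sum over all positive integers $n$ according to whether $p\mid n$ or $p\nmid n$. Since $w_p(n)$ is the indicator of $\{n:\ p\nmid n\}$, we have
\be
\sum_{n=1}^\infty\frac{w_p(n)}{n^2}=\sum_{n=1}^\infty\frac{1}{n^2}-\sum_{\substack{n=1\\ p\mid n}}^\infty\frac{1}{n^2}=\zeta(2)-\sum_{m=1}^\infty\frac{1}{(pm)^2},
\nonumber
\ee
where in the last step I substitute $n=pm$ in the sum over multiples of $p$. The remaining sum factors as $\frac{1}{p^2}\sum_{m=1}^\infty\frac{1}{m^2}=\frac{1}{p^2}\zeta(2)$, so that
\be
\sum_{n=1}^\infty\frac{w_p(n)}{n^2}=\zeta(2)-\frac{1}{p^2}\zeta(2)=\left(1-\frac{1}{p^2}\right)\zeta(2)=\frac{p^2-1}{p^2}\zeta(2),
\nonumber
\ee
which is exactly the claim. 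Absolute convergence of all series involved (everything is a subseries of $\sum 1/n^2$) justifies the rearrangement, and this is already tacitly assumed in the surrounding text of Appendix~\ref{appendix-A}.

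There is essentially no obstacle here: the lemma is a one-line computation once the sum is partitioned by divisibility by $p$. If one prefers an Euler-product-flavored argument, one could instead note that $\sum_n w_p(n)/n^2 = \prod_{q\ne p}(1-q^{-2})^{-1} = (1-p^{-2})\prod_q(1-q^{-2})^{-1} = (1-p^{-2})\zeta(2)$, but the direct splitting above is cleaner and avoids invoking the Euler product. I would present the direct version.
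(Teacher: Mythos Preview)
Your proof is correct and is essentially identical to the paper's: both split the full sum $\sum_{n\geq 1}n^{-2}$ into the part with $p\nmid n$ and the part with $p\mid n$, rewrite the latter as $p^{-2}\zeta(2)$, and conclude $\sum_n w_p(n)/n^2=(1-p^{-2})\zeta(2)$. The paper calls it a ``trivial computation'' and gives exactly this one line.
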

\begin{proof}
The formula follows from the trivial computation
\be\sum_{n=1}^\infty \frac{w_p(n)}{n^2}=\sum_{n=1}^\infty \frac{1}{n^2}-\sum_{n=1}^\infty\frac{1}{(pn)^2}=\left(1-\frac{1}{p^2}\right)\zeta(2).\nonumber\ee
\end{proof}
Let us denote by $\{\delta_1(n)\}_{n\geq 1}$ the sequence equal to $1$ if $n=1$ and 0 otherwise. Then
\begin{lem}\label{lemma-convolution}
\be(\mu w_p)*w_p=\delta_1.\nonumber\ee
\end{lem}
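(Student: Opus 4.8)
The plan is to verify the claimed identity $(\mu w_p)*w_p=\delta_1$ by evaluating the Dirichlet convolution at an arbitrary $n\geq 1$ and showing it equals $1$ when $n=1$ and $0$ otherwise. By the definition \eqref{dirichlet-convolution}, $\big((\mu w_p)*w_p\big)(n)=\sum_{d|n}\mu(d)w_p(d)w_p\!\left(\tfrac nd\right)$. The case $n=1$ is trivial since the only divisor is $d=1$ and all three factors equal $1$.

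For $n>1$ I would split into two cases according to whether $p\mid n$. If $p\nmid n$, then no divisor $d$ of $n$ is divisible by $p$, nor is any cofactor $n/d$, so $w_p(d)=w_p(n/d)=1$ for all $d\mid n$, and the sum collapses to $\sum_{d|n}\mu(d)$, which is $0$ by the classical identity \eqref{identity-divisors}. If $p\mid n$, write $n=p^a m$ with $a\geq 1$ and $p\nmid m$. A divisor $d\mid n$ can be written $d=p^b e$ with $0\leq b\leq a$ and $e\mid m$; then $w_p(d)=1$ forces $b=0$, and $w_p(n/d)=w_p(p^{a-b}\,m/e)=1$ forces $a-b=0$, i.e. $b=a$. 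Since $a\geq 1$ these two requirements are incompatible, so every term vanishes and the sum is $0$.

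Alternatively — and this is perhaps the cleaner route to write down — one can argue multiplicatively: both $\mu w_p$ and $w_p$ are multiplicative arithmetic functions, hence so is their Dirichlet convolution, so it suffices to check the identity on prime powers $q^k$. For a prime $q\neq p$ one has $\big((\mu w_p)*w_p\big)(q^k)=\sum_{j=0}^k \mu(q^j)w_p(q^j)w_p(q^{k-j})=\sum_{j=0}^k\mu(q^j)=1+(-1)=0$ for $k\geq 1$ (only $j=0,1$ contribute and $w_p\equiv 1$ on powers of $q$). For $q=p$ and $k\geq 1$, the factor $w_p(p^j)w_p(p^{k-j})$ vanishes unless $j=0$ \emph{and} $k-j=0$, which is impossible, so the value is $0$. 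Thus the convolution is the multiplicative function supported at $1$, namely $\delta_1$.

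There is essentially no obstacle here: this is a routine bookkeeping computation, and the only mild subtlety is making sure that in the case $p\mid n$ no divisor can simultaneously be coprime to $p$ and have a cofactor coprime to $p$ — which is exactly the observation already used in the proof of Lemma \ref{nth-term}. I would present the multiplicative argument as the main proof for brevity.
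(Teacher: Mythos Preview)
Your case-by-case argument is correct and essentially identical to the paper's own proof, which also splits on whether $p\mid n$: when $p\mid n$ every summand has $w_p(d)w_p(n/d)=0$, and when $p\nmid n$ the sum reduces to $\sum_{d|n}\mu(d)=0$. Your alternative multiplicative argument is also correct (and arguably tidier), though the paper does not use it.
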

\begin{proof}
For $n=1$ the statement is obvious since $d=1$ is the only divisor of $n$ and we have $\mu(1)w_p(1)^2=1=\delta_1(1)$. Let $n>1$. Then $\left((\mu w_p)*w_p\right)(n)=\sum_{d|n}\mu(d)w_p(d)w_p\!\left(\tfrac{n}{d}\right)$. We can discuss the cases when $p|n$ and  $p\nmid n$ separately, and argue as in the proof of Lemma \ref{nth-term}. In the first case we have that 
$w_p(n)w_p\!\left(\tfrac{n}{d}\right)=0$ and the sum is 0. In the second case 
$w_p(d)=w_p\!\left(\tfrac{n}{d}\right)$=1 and the sum becomes $\sum_{d|n}\mu(d)$, that is 0 by (\ref{identity-divisors}). In other words, we have shown that, for $n>1$, we have$\left((\mu w_p)*w_p\right)(n)=0=\delta_1(n)$, and this concludes the proof of the Lemma.
\end{proof}
\begin{cor}\label{cor-series-mu-wp}
\be\sum_{n=1}^\infty\frac{\mu(n)w_p(n)}{n^2}=\frac{p^2}{p^2-1}\frac{1}{\zeta(2)}.\nonumber\ee
\end{cor}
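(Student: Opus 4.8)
The plan is to read off the identity directly from Lemma \ref{lemma-convolution} together with the product formula (\ref{product-formula}). First I would set $a = \mu w_p$ and $b = w_p$ in (\ref{product-formula}); both sequences are bounded, so the Dirichlet series (evaluated at $s=2$) converge absolutely and the rearrangement underlying (\ref{product-formula}) is legitimate, exactly as remarked right after that formula. Then
\be
\left(\sum_{n=1}^\infty\frac{\mu(n)w_p(n)}{n^2}\right)\left(\sum_{n=1}^\infty\frac{w_p(n)}{n^2}\right)=\sum_{n=1}^\infty\frac{\big((\mu w_p)*w_p\big)(n)}{n^2}=\sum_{n=1}^\infty\frac{\delta_1(n)}{n^2}=1,
\ee
where the middle equality is Lemma \ref{lemma-convolution} and the last one is the definition of $\delta_1$.

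Next I would invoke Lemma \ref{lemma-sum-w_p}, which gives $\sum_{n\geq1} w_p(n)/n^2 = \frac{p^2-1}{p^2}\zeta(2)$; this quantity is a convergent sum of positive terms, hence nonzero, so I may divide by it. Solving the displayed equation for the first factor yields
\be
\sum_{n=1}^\infty\frac{\mu(n)w_p(n)}{n^2}=\frac{1}{\tfrac{p^2-1}{p^2}\zeta(2)}=\frac{p^2}{p^2-1}\,\frac{1}{\zeta(2)},
\ee
which is the claim. There is no serious obstacle here: the only points requiring a word are the absolute convergence justifying (\ref{product-formula}) (handled by boundedness of $w_p$ and $\mu w_p$) and the non-vanishing of the second series (immediate from positivity); everything else is a one-line algebraic inversion.
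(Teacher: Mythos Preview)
Your argument is correct and coincides with the paper's own proof: it too applies the product formula (\ref{product-formula}) with $a=\mu w_p$, $b=w_p$, uses Lemma \ref{lemma-convolution} to get $a*b=\delta_1$, and then divides by the value of $\sum_n w_p(n)/n^2$ from Lemma \ref{lemma-sum-w_p}.
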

\begin{proof}
This corollary is a straightforward application of Lemma \ref{lemma-sum-w_p} and the formul\ae\: (\ref{product-formula}, \ref{dirichlet-convolution}) with $a=\mu w_p$, $b=w_p$, and (from Lemma \ref{lemma-convolution}) $a*b=\delta_1$.
\end{proof}
We can now give the 
\begin{proof}[Proof of Theorem \ref{thm1''} when $\mathcal S=\{p\}$]
Notice that 
$|\mathcal Q_N^{\{p\}}(0)|=\sum_{n\leq N}\mu^2(n)w_p(n)$. By Lemma \ref{nth-term}, we can write
\be|\mathcal Q_N^{\{p\}}(0)|=\sum_{n\leq N}\mu^2(n)w_p(n)=\sum_{n\leq N}\sum_{d^2|n}\mu(d)w_p(d)w_p\!\left(\tfrac{n}{d}\right).\label{pf1}\ee
Now we want to exchange the two sums. Let us fix $d\leq \sqrt N$. For every $n$ of the form $n=m d^2$ we have $w_p\!\left(\tfrac{n}{d}\right)=w_p(m)w_p(d)$. Let $\eta_d^{\{p\}}(N)$ be the number of integers of the form $m d^2$ where $m\leq \tfrac{N}{d^2}$ and $p\nmid m$. Then
\be|\mathcal Q_N^{\{p\}}(0)|=\sum_{d\leq \sqrt N}\eta_d^{\{p\}}(N)\mu(d)w_p(d).\nonumber\ee
We can estimate the number $\eta_d^{\{p\}}(N)$ as follows. Let $\left\lfloor\tfrac{N}{d^2}\right\rfloor\equiv t \:\:(\!\!\!\!\mod p)$, $t\in\{0,1,2,\ldots, p-1\}$.
Then \be\eta_d^{\{p\}}(N)=\frac{\left\lfloor\tfrac{N}{d^2}\right\rfloor-t}{p}(p-1)+t=\frac{N}{d^2}\frac{p-1}{p}+q_1^{\{p\}}(d,N),\nonumber\ee
where
\be\left|q_1^{\{p\}}(d,N)\right|\leq\frac{p-1}{p}\left|\left\lfloor\frac{N}{d^2}\right\rfloor-\frac{N}{d^2}\right|+t\left(1-\frac{p-1}{p}\right)\leq2\, \frac{p-1}{p}=:C'(\{p\}).\nonumber\ee
This gives us
\be|\mathcal Q_N^{\{p\}}(0)|=N\frac{p-1}{p}\sum_{d\leq\sqrt N}\frac{\mu(d)w_p(d)}{d^2}+q_2^{\{p\}}(N),\nonumber\ee
where $|q_2^{\{p\}}(N)|\leq C'(\{p\})\sqrt N$. 
Now, Corollary \ref{cor-series-mu-wp} yields
\be|\mathcal Q_N^{\{p\}}(N)|=\frac{p}{p+1}\frac{1}{\zeta(2)}N+q_2^{\{p\}}(N)+q_3^{\{p\}}(N),\nonumber\ee
where \be |q_3^{\{p\}}(N)|\leq N\frac{p-1}{p}\sum_{d>\sqrt N}\frac{1}{d^2}\leq N\frac{p-1}{p}\int_{\sqrt N}^\infty\frac{\de x}{(x-1)^2}=\frac{p-1}{p}\frac{N}{\sqrt N-1}\leq C''(\{p\})\sqrt N\nonumber\ee
for every $N\geq4$, where $C''(\{p\})=2\frac{p-1}{p}$. This concludes the proof of the theorem, with $\alpha(\{p\})=\frac{p}{p+1}$ and $C(\{p\})=C'(\{p\})+C''(\{p\})$.
\end{proof}

Let us now discuss how to adapt the above proof of Theorem \ref{thm1''} for the case of a general finite set of primes $\mathcal S\subset\mathcal P$. 
The sequence $w_p$ has to be replaced by the indicator of the integers divisible by none of the primes in $\mathcal S$, i.e.
\be w_{\mathcal S}(n)=\begin{cases}0,&\mbox{if $p|n$ for some $p\in\mathcal S$;}\\1,&\mbox{otherwise}.\end{cases}\nonumber\ee 
Lemma \ref{nth-term} is still valid if we replace $w_p$ by $w_{\mathcal S}$:
\begin{lem}\label{nth-term1}
\be\mu^2(n)w_{\mathcal S}(n)=\sum_{d:\:d^2|n}\mu(d)w_{\mathcal S}(d)w_{\mathcal S}\!\left(\tfrac{n}{d}\right)\label{identity-nth-term1}\ee
\end{lem}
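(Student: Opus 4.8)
The plan is to mimic the proof of Lemma \ref{nth-term} verbatim, replacing $w_p$ by $w_{\mathcal S}$ throughout, using only the two defining properties of $w_{\mathcal S}$ that were actually used: first, $w_{\mathcal S}$ is the indicator of integers coprime to every $p\in\mathcal S$; second, $w_{\mathcal S}$ is completely multiplicative in the weak sense that $w_{\mathcal S}(ab)=w_{\mathcal S}(a)w_{\mathcal S}(b)$ whenever we factor an integer, since $ab$ is divisible by some $p\in\mathcal S$ exactly when $a$ or $b$ (or both) is. The key observation is that the proof of Lemma \ref{nth-term} never used that $\mathcal S$ is a singleton — the dichotomy ``$p\mid n$'' versus ``$p\nmid n$'' simply becomes ``$p\mid n$ for some $p\in\mathcal S$'' versus ``$p\nmid n$ for all $p\in\mathcal S$''.

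First I would treat the case where $n$ is divisible by some $p\in\mathcal S$: then for every divisor $d$ of $n$, at least one of $d$ or $n/d$ is also divisible by that same $p$, so $w_{\mathcal S}(d)\,w_{\mathcal S}(n/d)=0$ and the right-hand side of \eqref{identity-nth-term1} vanishes; the left-hand side vanishes too because $w_{\mathcal S}(n)=0$. Next, if $n$ is coprime to every $p\in\mathcal S$, then so is each divisor $d$ and each cofactor $n/d$, hence $w_{\mathcal S}(d)=w_{\mathcal S}(n/d)=1$ and the right-hand side collapses to $\sum_{d^2\mid n}\mu(d)$. At this point the remaining argument is exactly the one already given: if $n$ is square-free this sum is $\mu(1)=1=\mu^2(n)$; if $n$ is not square-free, writing $n=n_1n_2^2$ as in the proof of Lemma \ref{nth-term} with $n_1$ square-free, the condition $d^2\mid n$ forces $d\mid n_2$, so the sum becomes $\sum_{d\mid n_2}\mu(d)=0$ by \eqref{identity-divisors}, matching $\mu^2(n)=0$.

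There is essentially no obstacle here: the only point requiring a line of justification is the multiplicativity property $w_{\mathcal S}(d)w_{\mathcal S}(n/d)=0$ when $p\mid n$ for some $p\in\mathcal S$, and $w_{\mathcal S}(d)=w_{\mathcal S}(n/d)=1$ when $n$ is $\mathcal S$-coprime, both of which are immediate from the definition of $w_{\mathcal S}$. I would state the proof in two short paragraphs, explicitly noting that ``the proof of Lemma \ref{nth-term} goes through unchanged upon replacing $w_p$ by $w_{\mathcal S}$ and the condition $p\mid n$ by $p\mid n$ for some $p\in\mathcal S$,'' so as not to duplicate the combinatorial bookkeeping around the factorization $n=n_1n_2^2$.
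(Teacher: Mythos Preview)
Your proposal is correct and matches the paper's approach exactly: the paper gives no separate proof of Lemma~\ref{nth-term1}, merely stating that ``Lemma~\ref{nth-term} is still valid if we replace $w_p$ by $w_{\mathcal S}$,'' which is precisely what you spell out. Your identification of the two properties of $w_{\mathcal S}$ that make the argument go through unchanged is accurate, and the case split and the $n=n_1n_2^2$ bookkeeping are identical to the original.
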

Lemma \ref{lemma-sum-w_p} is replaced by an analogous statement given by inclusion-exclusion:
\begin{lem}\label{lemma-sum-w_p1}
\be\sum_{n=1}^\infty \frac{w_{\mathcal S}(n)}{n^2}=a(\mathcal S)\zeta(2),\nonumber\ee
where \be a(\mathcal S)=\prod_{p\in\mathcal S}\frac{p^2-1}{p^2}.\nonumber\ee
\end{lem}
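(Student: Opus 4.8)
The plan is to compute the Dirichlet series $\sum_{n\ge 1} w_{\mathcal S}(n)/n^2$ by a direct inclusion-exclusion over the primes in $\mathcal S$, exactly parallelling the computation in Lemma \ref{lemma-sum-w_p} (which is the case $\mathcal S=\{p\}$). First I would write $w_{\mathcal S}$ as a product over $p\in\mathcal S$ of the single-prime indicators: $w_{\mathcal S}(n)=\prod_{p\in\mathcal S}w_p(n)$, since $n$ is coprime to every element of $\mathcal S$ precisely when it is coprime to each one individually. Equivalently, by inclusion-exclusion on the condition ``$p\mid n$ for some $p\in\mathcal S$'', one has
\be
w_{\mathcal S}(n)=\sum_{\mathcal T\subseteq\mathcal S}(-1)^{|\mathcal T|}\,\chi_{[\mathcal T]\mid n},\nonumber
\ee
where $\chi_{[\mathcal T]\mid n}$ is the indicator that the squarefree integer $[\mathcal T]=\prod_{p\in\mathcal T}p$ divides $n$ (this uses that the events $p\mid n$ for distinct primes are governed by divisibility by the product, since the $p$'s are coprime).

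Next I would substitute this into the series and exchange the finite sum over $\mathcal T\subseteq\mathcal S$ with the sum over $n$ (legitimate as everything converges absolutely, all terms bounded by $1/n^2$). For each $\mathcal T$, the integers divisible by $[\mathcal T]$ are exactly $\{[\mathcal T]m:\:m\ge 1\}$, so
\be
\sum_{n:\,[\mathcal T]\mid n}\frac{1}{n^2}=\frac{1}{[\mathcal T]^2}\sum_{m\ge 1}\frac{1}{m^2}=\frac{\zeta(2)}{[\mathcal T]^2}.\nonumber
\ee
Therefore $\sum_{n\ge1} w_{\mathcal S}(n)/n^2=\zeta(2)\sum_{\mathcal T\subseteq\mathcal S}(-1)^{|\mathcal T|}/[\mathcal T]^2=\zeta(2)\prod_{p\in\mathcal S}\bigl(1-\tfrac{1}{p^2}\bigr)$, where the last equality is just expanding the finite product over $p\in\mathcal S$. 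Since $\prod_{p\in\mathcal S}(1-1/p^2)=\prod_{p\in\mathcal S}\frac{p^2-1}{p^2}=a(\mathcal S)$, this is the claimed formula.

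There is essentially no obstacle here: the only thing to be careful about is justifying the interchange of summations, which is immediate because $\mathcal S$ is finite and the series converges absolutely. An alternative one-line argument, if one prefers to avoid inclusion-exclusion entirely, is to note that $w_{\mathcal S}$ is multiplicative with $w_{\mathcal S}(p^k)=0$ for $p\in\mathcal S$ and $w_{\mathcal S}(p^k)=1$ otherwise, so the Dirichlet series factors as an Euler product $\prod_{p\notin\mathcal S}(1-p^{-2})^{-1}=\zeta(2)\prod_{p\in\mathcal S}(1-p^{-2})$; either route gives the same constant $a(\mathcal S)=\prod_{p\in\mathcal S}\frac{p^2-1}{p^2}$.
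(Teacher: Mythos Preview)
Your proof is correct and follows essentially the same inclusion-exclusion argument as the paper's own proof, which simply writes out the alternating sum over subsets of $\mathcal S$ and factors it as $\prod_{p\in\mathcal S}(1-1/p^2)$. Your version is slightly more explicit about the interchange of sums and also notes the Euler-product alternative, but the core idea is identical.
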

\begin{proof}
If $\mathcal S=\{p_1,p_2,\ldots,p_k\}$, then
inclusion-exclusion gives
\bey a(\mathcal S)&=&\left(1-\sum_{i=1}^k\frac{1}{p_i^2}+\sum_{1\leq i_1<i_2\leq k}\frac{1}{(p_{i_1}p_{i_2})^2}-\sum_{1\leq i_1<i_2<i_3\leq k}\frac{1}{(p_{i_1}p_{i_2}p_{i_3})^2}+\ldots+\frac{(-1)^k}{(p_1p_2\cdots p_k)^2}\right)=\nonumber\\
&=&\prod_{i=1}^k\left(1-\frac{1}{p_i^2}\right).\nonumber\eey
\end{proof}
Lemma \ref{lemma-convolution} also holds for $w_{\mathcal S}$:
\begin{lem}\label{lemma-convolution1}
\be(\mu w_{\mathcal S})*w_{\mathcal S}=\delta_1.\nonumber\ee
\end{lem}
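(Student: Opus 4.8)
The plan is to mimic exactly the proof of Lemma \ref{lemma-convolution} with $w_p$ replaced by $w_{\mathcal S}$, since the only property of $w_p$ that was used there is that it is the indicator of integers avoiding a fixed set of ``forbidden'' primes, and that $w_{\mathcal S}$ is completely multiplicative in the sense that $w_{\mathcal S}(n)w_{\mathcal S}(n')=w_{\mathcal S}(nn')$ whenever this makes sense — more precisely, for any $d\mid n$, if $p\nmid n$ for every $p\in\mathcal S$ then $w_{\mathcal S}(d)=w_{\mathcal S}(n/d)=1$, while if $p\mid n$ for some $p\in\mathcal S$ then $p\mid d$ or $p\mid n/d$, so $w_{\mathcal S}(d)\,w_{\mathcal S}(n/d)=0$.

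First I would check the base case: for $n=1$ the only divisor is $d=1$, and $\mu(1)w_{\mathcal S}(1)^2=1=\delta_1(1)$. Then for $n>1$ I would write
\[
\bigl((\mu w_{\mathcal S})*w_{\mathcal S}\bigr)(n)=\sum_{d\mid n}\mu(d)\,w_{\mathcal S}(d)\,w_{\mathcal S}\!\left(\tfrac{n}{d}\right)
\]
and split into two cases. If some $p\in\mathcal S$ divides $n$, then for every divisor $d$ of $n$ at least one of $d$, $n/d$ is divisible by that $p$, hence $w_{\mathcal S}(d)\,w_{\mathcal S}(n/d)=0$ and the whole sum vanishes. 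If no $p\in\mathcal S$ divides $n$, then no divisor of $n$ (nor any complementary divisor) is divisible by any $p\in\mathcal S$, so $w_{\mathcal S}(d)=w_{\mathcal S}(n/d)=1$ for every $d\mid n$, and the sum collapses to $\sum_{d\mid n}\mu(d)$, which is $0$ by the classical identity (\ref{identity-divisors}) since $n>1$. In both cases the value is $0=\delta_1(n)$, which is what we wanted.

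There is essentially no obstacle here: this is a direct transcription of the earlier argument, and the only thing to be careful about is the observation, used in the first case, that ``$p\mid n$ implies $p\mid d$ or $p\mid n/d$'' — which is immediate since $n=d\cdot(n/d)$ and $p$ is prime. One could alternatively phrase the whole proof at the level of Dirichlet series, noting that $\sum_n \mu(n)w_{\mathcal S}(n)n^{-s}=\prod_{p\notin\mathcal S}(1-p^{-s})$ and $\sum_n w_{\mathcal S}(n)n^{-s}=\prod_{p\notin\mathcal S}(1-p^{-s})^{-1}$ as formal Euler products, whose product is identically $1$; but the elementary divisor-sum argument above is shorter and self-contained, so that is the route I would take.
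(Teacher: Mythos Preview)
Your proof is correct and follows exactly the approach the paper intends: the paper does not write out a separate proof of this lemma but simply indicates that the argument of Lemma~\ref{lemma-convolution} carries over verbatim with $w_p$ replaced by $w_{\mathcal S}$, which is precisely what you do. The case split (some $p\in\mathcal S$ divides $n$ versus none does) and the appeal to~(\ref{identity-divisors}) are the same, so there is nothing to add.
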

Finally, Corollary \ref{cor-series-mu-wp} is replaced by
\begin{cor}\label{cor-series-mu-wp1}
\be\sum_{n=1}^\infty\frac{\mu(n)w_{\mathcal S}(n)}{n^2}=\frac{1}{a(\mathcal S)\zeta(2)} \nonumber\ee
\end{cor}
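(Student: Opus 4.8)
The final statement to prove is Corollary \ref{cor-series-mu-wp1}:
\[
\sum_{n=1}^\infty\frac{\mu(n)w_{\mathcal S}(n)}{n^2}=\frac{1}{a(\mathcal S)\zeta(2)}.
\]

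The plan is to mirror exactly the proof of Corollary \ref{cor-series-mu-wp}, which handled the single-prime case $\mathcal S=\{p\}$, now invoking the general-$\mathcal S$ versions of the three ingredients that have just been stated. First, I would apply the Dirichlet series identity (\ref{product-formula})–(\ref{dirichlet-convolution}) with $s=2$, $a=\mu w_{\mathcal S}$, and $b=w_{\mathcal S}$: this gives
\[
\left(\sum_{n=1}^\infty\frac{\mu(n)w_{\mathcal S}(n)}{n^2}\right)\left(\sum_{n=1}^\infty\frac{w_{\mathcal S}(n)}{n^2}\right)=\sum_{n=1}^\infty\frac{\big((\mu w_{\mathcal S})*w_{\mathcal S}\big)(n)}{n^2}.
\]
By Lemma \ref{lemma-convolution1}, $(\mu w_{\mathcal S})*w_{\mathcal S}=\delta_1$, so the right-hand side collapses to $\sum_n \delta_1(n)/n^2=1$. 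By Lemma \ref{lemma-sum-w_p1}, the second factor on the left equals $a(\mathcal S)\zeta(2)$, which is a nonzero finite number (each factor $\tfrac{p^2-1}{p^2}$ is strictly between $0$ and $1$, and the product is finite since $\mathcal S$ is finite). Dividing through by $a(\mathcal S)\zeta(2)$ yields the claimed formula.

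There is essentially no obstacle here: the corollary is a one-line consequence of the stated lemmata, exactly as in the $\mathcal S=\{p\}$ case. The only points worth a remark are that convergence of all three series is unproblematic because $s=2>1$ and the sequences $\mu w_{\mathcal S}$, $w_{\mathcal S}$ are bounded (as already noted before (\ref{identity-divisors})), so the manipulation of Dirichlet series via (\ref{product-formula}) is legitimate; and that $a(\mathcal S)\zeta(2)\neq 0$, so division is allowed. If one wished to be self-contained one could also note $\tfrac{1}{a(\mathcal S)\zeta(2)}=\tfrac{1}{\zeta(2)}\prod_{p\in\mathcal S}\tfrac{p^2}{p^2-1}$, which specializes to Corollary \ref{cor-series-mu-wp} when $\mathcal S=\{p\}$.

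Strictly speaking the proof requires Lemma \ref{lemma-convolution1}, whose proof is analogous to that of Lemma \ref{lemma-convolution}: for $n=1$ the only divisor is $d=1$ and $\mu(1)w_{\mathcal S}(1)^2=1=\delta_1(1)$; for $n>1$, if some $p\in\mathcal S$ divides $n$ then for every divisor $d\mid n$ at least one of $d$, $n/d$ is divisible by $p$, so $w_{\mathcal S}(d)w_{\mathcal S}(n/d)=0$ and the sum vanishes, while if no $p\in\mathcal S$ divides $n$ then $w_{\mathcal S}(d)=w_{\mathcal S}(n/d)=1$ for all $d\mid n$ and the sum is $\sum_{d\mid n}\mu(d)=0$ by (\ref{identity-divisors}). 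Hence the main step is just the bookkeeping of the Dirichlet-series identity above.
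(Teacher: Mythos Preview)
Your proof is correct and follows exactly the approach the paper intends: the paper states this corollary without proof as the $\mathcal S$-analogue of Corollary~\ref{cor-series-mu-wp}, and your argument---applying (\ref{product-formula})--(\ref{dirichlet-convolution}) with $a=\mu w_{\mathcal S}$, $b=w_{\mathcal S}$, using Lemma~\ref{lemma-convolution1} for $a*b=\delta_1$ and Lemma~\ref{lemma-sum-w_p1} for the value of $\sum_n w_{\mathcal S}(n)/n^2$---is precisely that analogy spelled out. Your added justification of Lemma~\ref{lemma-convolution1} and the remark on nonvanishing of $a(\mathcal S)\zeta(2)$ are welcome but not required.
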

We are now ready to give the
\begin{proof}[Proof of Theorem \ref{thm1''} for general $\mathcal S=\{p_1,\ldots,p_k\}\subset\mathcal P$]
Lemma \ref{nth-term1} gives
\be|\mathcal Q_N^{\mathcal S}(0)|=\sum_{n\leq N}\sum_{d^2|n}\mu(d)w_{\mathcal S}(d)w_{\mathcal S}\!\left(\tfrac{n}{d}\right).\label{pf11}\ee
Let us fix $d\leq \sqrt N$. For every $n$ of the form $n=m d^2$ we have $w_{\mathcal S}\!\left(\tfrac{n}{d}\right)=w_{\mathcal S}(m)w_{\mathcal S}(d)$. Let $\eta_d^{\mathcal S}(N)$ be the number of integers of the form $m d^2$ where $m\leq \tfrac{N}{d^2}$ and $p\nmid m$ for every $p\in\mathcal S$. Then
\be|\mathcal Q_N^{\mathcal S}(0)|=\sum_{d\leq \sqrt N}\eta_d^{\mathcal S}(N)\mu(d)w_{\mathcal S}(d).\nonumber\ee
The set of numbers not divisible by any $p\in\mathcal S$, has density given by
\bey
&&1-\sum_{i=1}^k\frac{1}{p_i}+\sum_{1\leq i_1<i_2\leq k}\frac{1}{p_{i_1}p_{i_2}}-\sum_{1\leq i_1<i_2<i_3\leq k}\frac{1}{p_{i_1}p_{i_2}p_{i_3}}+\ldots+\frac{(-1)^k}{p_1p_2\cdots p_k}=\nonumber\\
&&=\prod_{i=1}^k\left(1-\frac{1}{p_i}\right) 
=\prod_{p\in\mathcal S}\frac{p-1}{p}\nonumber\eey
The estimate of $\eta_d^{\mathcal S}(N)$ comes from the following observation. If 
\be \left\lfloor\frac{N}{d^2}\right\rfloor\equiv t\:\:\left(\!\!\!\!\!\!\mod [\mathcal S] 
\right)\hspace{.5cm}\mbox{for $t\in\left\{0,1,2,\ldots,[\mathcal S]
-1\right\}$},\nonumber\ee
then
\be\eta_d^{\mathcal S}(N)=\frac{\left\lfloor\frac{N}{d^2}\right\rfloor-t}{[\mathcal S]
}\prod_{p\in\mathcal S} (p-1)+t=\frac{N}{d^2}\prod_{p\in\mathcal S}\frac{p-1}{p}+q_1^{\mathcal S}(d,N),\nonumber\ee
where 
\bey
|q_1^{\mathcal S}(d,N)|&\leq& \prod_{p\in\mathcal S}\frac{p-1}{p}\left(\left\lfloor\frac{N}{d^2}\right\rfloor-\frac{N}{d^2}\right)+t\!\left(1-\prod_{p\in\mathcal S}\frac{p-1}{p}\right)\leq\nonumber\\
&\leq& 2\prod_{p\in\mathcal S}\frac{p-1}{p}+\left(\prod_{p\in\mathcal S} p -1\right)-\prod_{p\in\mathcal S}(p-1)=:C'(\mathcal S).\nonumber\eey
This gives
\be|\mathcal Q_N^{\mathcal S}(0)|=N\prod_{p\in\mathcal S}\frac{p-1}{p}\sum_{d\leq\sqrt N}\frac{\mu(d)w_{\mathcal S}(d)}{d^2}+q_2^{\mathcal S}(N),\nonumber\ee
where $|q_2^{\mathcal S}(N)|\leq C'(\mathcal S)\sqrt N$. Now Corollary \ref{cor-series-mu-wp1} yields
\be
|\mathcal Q_N^{\mathcal S}(0)|=\prod_{p\in\mathcal S}\frac{p}{p+1}\frac{1}{\zeta(2)}N+q_2^{\mathcal S}(N)+q_3^{\mathcal S}(N),\nonumber
\ee
where
\be |q_3^{\mathcal S}(N)|\leq N\prod_{p\in\mathcal S} \frac{p-1}{p}\sum_{d>\sqrt N}\frac{1}{d^2}\leq C''(\mathcal S)\sqrt N\nonumber\ee
and $C''(\mathcal S)=2\prod_{p\in\mathcal S} \frac{p-1}{p}$ for $N\geq4$. This concludes the proof of the general case of the theorem, with $\alpha(\mathcal S)=\prod_{p\in\mathcal S}\frac{p}{p+1}$ and $C(\mathcal S)=C'(\mathcal S)+C''(\mathcal S)$.
\end{proof}

\section{The Proofs of Lemmata \ref{lem-sum-c_2(d^2l)}-
\ref{lem-sum-c_2(d^2l+t)-general}}\label{appendix-B}
\begin{proof}[Proof of Lemma \ref{lem-sum-c_2(d^2l)}]
Let us write $l$ as follows:\be l=\prod_{\bar p\in \bar{\mathcal P}} (\bar{p})^{a(\bar{p})}\cdot\prod_{p'\in\mathcal P'}(p')^{b(p')}\cdot q\label{write-l}\ee
where $\bar{\mathcal P}\subseteq\mathcal P(d)$, $a(\bar p)\geq 2$ for every $\bar p\in\bar{\mathcal P}$, $\mathcal P'\subset\mathcal P\smallsetminus\mathcal P(d)$, $|\mathcal P|<\infty$, $b(p')\geq 2$ for every $p'\in\mathcal P'$, $q$ is square-free and $p\nmid q$ for every $p\in\bar{\mathcal P}\cup\mathcal P'$. It is clear that every $l\geq 1$ can be written uniquely as in (\ref{write-l}). And the condition $l\leq N$ can be rewritten using the notation in (\ref{def-Q_N^S(0)}) as 
\be q\in\mathcal Q^{\bar{\mathcal P}\cup \mathcal P'}_{N/\left(\prod_{\bar p\in\bar{\mathcal P}}(\bar p)^{a(\bar p)}\cdot\prod_{p'\in\mathcal P'}(p')^{b(p')}\right)}(0).\nonumber\ee
Furthermore, notice that $c_2(d^2 l)$ can depends only on $d$ and $\mathcal P'$:
\be c_2(d^2 l)=\prod_{p\in P(d)\cup\mathcal P'}\left(1-\frac{1}{p^2}\right)\prod_{p\notin \mathcal P(d)\cup\mathcal P'}\left(1-\frac{2}{p^2}\right) =\sigma_1\cdot\prod_{p\in\mathcal P(d)\cup\mathcal P'}\frac{p^2-1}{p^2-2}\label{c_2(d^l)}.\nonumber\ee
Now we can write
\bey
&&\frac{1}{N}\sum_{l\leq N}c_2(d^2 l)=\sigma_1\prod_{p\in \mathcal P(d)}\frac{p^2-1}{p^2-2}\sum_{\bar{\mathcal P}\subseteq \mathcal P(d)}\sum_{\mathcal P'\subset\mathcal P\smallsetminus\mathcal P(d)}\prod_{p'\in \mathcal P'}\frac{p'^2-1}{p'^2-2}\nonumber\\
&&\hspace{.3cm}\sum_{\scriptsize{\ba{c}a(\bar p)\geq 2,\:\bar p\in\bar{\mathcal P}\\b(p')\geq 2,\:p'\in\mathcal P'\ea}}\frac{1}{\prod_{\bar p\in\bar{\mathcal P}}(\bar p)^{a(\bar p)}\cdot\prod_{p'\in\mathcal P'}(p')^{b(p')}}\mathbb E^{\bar{\mathcal P}\cup\mathcal P'}_{N/\left(\prod_{\bar p\in\bar{\mathcal P}}(\bar p)^{a(\bar p)}\cdot\prod_{p'\in\mathcal P'}(p')^{b(p')}\right)}(0)\nonumber
\eey
Now we can use (\ref{densities-E_N^S(0)}) while taking the limit as $N\to\infty$, and sum over all $a(\bar p), b(p')\geq 2$ as in the proof of Proposition \ref{thm-density-level-sets-for-c2}. 
Notice that the sets $\mathcal P(d)$ and $\mathcal P'$ are disjoint. We obtain
\bey
\lim_{N\to\infty}\frac{1}{N}\sum_{l\leq N}c_2(d^2 l)&=&\sigma_1\prod_{p\in\mathcal P(d)}\frac{p^2-1}{p^2-2}\sum_{\bar{\mathcal P}\subseteq \mathcal P(d)}\sum_{\mathcal P'\subset\mathcal P\smallsetminus\mathcal P(d)}\prod_{p'\in \mathcal P'}\frac{p'^2-1}{p'^2-2}\frac{6}{\pi^2}\prod_{p\in\bar{\mathcal P}\cup\mathcal P'}\frac{1}{p^2-1}=\nonumber\\
&=&\sigma_1\frac{6}{\pi^2}\prod_{p\in\mathcal P(d)}\frac{p^2-1}{p^2-2}\left(1+\frac{1}{p^2-1}\right)\prod_{p\in\mathcal P\smallsetminus\mathcal P(d)}\left(1+\frac{1}{p^2-2}\right)=\nonumber\\
&=&\frac{6}{\pi^2}\prod_{p\in\mathcal P(d)}\frac{p^2}{p^2-1}\prod_{p}\frac{p^2-1}{p^2}=\left(\frac{6}{\pi^2}\right)^2\prod_{p\in\mathcal P(d)}\frac{p^2}{p^2-1},\nonumber\eey
and the lemma is thus proved.
\end{proof}
\begin{proof}[Proof of Lemma \ref{lem-sum_c_2(d^l+t)-square-free}] 
Let us first consider the case $g=1$. Numbers of the form
$\prod_{p'\in\mathcal P'}p'^{b(p')} \cdot q$, 
where $\mathcal P'\subset\mathcal P\smallsetminus\mathcal P(d)$, $|\mathcal P'|<\infty$, $b(p')\geq 2$ for $p'\in\mathcal P'$, $q$ is square-free and $p\nmid q$ for every $p\in\mathcal P(d)\cup\mathcal P'$
can be represented as
\be \prod_{p'\in\mathcal P'}p'^{b(p')} \cdot q=d^2 l+h\ee
for some $1\leq h\leq d^2-1$, where $\gcd(h,d^2)=1$. Since there are $\varphi(d^2)$ such $h$'s (here $\varphi$ denotes Euler's totient function) and the various $h$'s appear with the same frequency, then
\be\lim_{N\to\infty}\frac{1}{N}\sum_{l\leq N}c_2(d^2 l+t)=\frac{1}{\varphi(d^2)}\lim_{N\to\infty}\frac{1}{N}\sum_{l\leq N}\sum_{\gcd(h,d^2)=1}c_2(d^2 l+h)\label{intermediate-step-1/varphi}\ee
Notice that the condition $l\leq N$ becomes \be q\in\mathcal Q^{\mathcal P(d)\cup\mathcal P'}_{(d^2N+h)/\left(\prod_{p'\in\mathcal P'}p'^{b(p')}\right)}(0)\nonumber\ee
and
\be c_2(d^2l+t)=\prod_{p'\in\mathcal P'}\left(1-\frac{1}{p^2}\right)\prod_{p\notin \mathcal P'}\left(1-\frac{2}{p^2}\right)=\sigma_1\cdot\prod_{p'\in\mathcal P'}\frac{p'^2-1}{p'^2-2}.\nonumber\ee
Now
\bey &&\frac{1}{N}\sum_{l\leq N}\sum_{\gcd(h,d^2)=1}c_2(d^2 l+h)=\sigma_1\sum_{\mathcal P'\subset\mathcal P\smallsetminus \mathcal P(d)}\prod_{p'\in\mathcal P'}\frac{p'^2-1}{p'^2-2}\nonumber\\
&&\hspace{.3cm}\sum_{b(p')\geq2,\:p'\in\mathcal P'}\frac{d^2}{\left(\prod_{p'\in\mathcal P'}p'^{b(p')}\right)}\frac{d^2 N+h}{d^2 N}\mathbb E^{\mathcal P(d)\cup\mathcal P'}_{(d^2N+h)/\left(\prod_{p'\in\mathcal P'}p'^{b(p')}\right)}(0),\nonumber\eey
and by taking the limit as $N\to\infty$ we get
\bey
&&\lim_{N\to\infty}\frac{1}{N}\sum_{l\leq N}\sum_{\gcd(h,d^2)=1}c_2(d^2 l+h)=\sigma_1\sum_{\mathcal P'\subset\mathcal P\smallsetminus\mathcal P(d)}\prod_{p'\in\mathcal P'}\frac{p'^2-1}{p'^2-2}\frac{6}{\pi^2}\prod_{p\in\mathcal P(d)}\frac{p^3}{p+1}\prod_{p'\in\mathcal P'}\frac{1}{p'^2-1}=\nonumber\\
&&=\sigma_1\frac{6}{\pi^2}\prod_{p\in\mathcal P(d)}\frac{p^3}{p+1}\sum_{\mathcal P'\subset\mathcal P\smallsetminus\mathcal P(d)}\prod_{p'\in\mathcal P'}\frac{1}{p'^2-2}=\sigma_1\frac{6}{\pi^2}\prod_{p\in\mathcal P(d)}\frac{p^3}{p+1}\prod_{p\in\mathcal P\smallsetminus\mathcal P(d)}\left(1+\frac{1}{p^2-2}\right)=\nonumber\\
&&=\frac{6}{\pi^2}\prod_{p\in\mathcal P(d)}\frac{p^3(p^2-2)}{(p+1)(p^2-1)}\prod_{p}\frac{p^2-1}{p^2}=\left(\frac{6}{\pi^2}\right)^2\prod_{p\in\mathcal P(d)}\frac{p^3(p^2-2)}{(p+1)(p^2-1)}\label{pf-lem-sum-c_2(d^2l+1)}.
\eey
Let us apply the fact that $\varphi$ is multiplicative and that $\varphi(p^2)=p(p-1)$. We obtain
\be\frac{1}{\varphi(d^2)}=
\prod_{p\in\mathcal P(d)}\frac{1}{p(p-1)}\label{pf-lem-sum-c_2(d^2l+1)-1}.\ee
Now (\ref{intermediate-step-1/varphi}), (\ref{pf-lem-sum-c_2(d^2l+1)}) and (\ref{pf-lem-sum-c_2(d^2l+1)-1}) yield the desired result.
Let us now consider the 
case when $\gcd(t,d^2)=g=\bar p$. 
In this case $d^2=\bar p^2 d_1^2$ and $t=\bar p t_1$, where $d_1$ is square-free, $\bar p\nmid d_1$, and $\bar p\nmid t_1$. We can write
 \be \frac{d^2}{\bar p}l+\frac{t}{\bar p}=\bar p d_1^2 l+t_1=\prod_{p'\in\mathcal P'}p'^{b(p')}\cdot q_1,\nonumber\ee
where $\mathcal P'\subset\mathcal P\smallsetminus\mathcal P(d)$, $|\mathcal P'|<\infty$, $q_1$ is square-free and $p\nmid q$ for every $p\in \mathcal P(d)\cup\mathcal P'$. The condition $l\leq N$ reads now as
\be q_1\in\mathcal Q^{\mathcal P(d)\cup\mathcal P'}_{(\bar p d_1^2 N+t_1)/\left(\prod_{p'\in\mathcal P'}p'^{b(p')}\right)}(0).\nonumber\ee
Since, by assumption, $\bar p^2\nmid d^2 l+t$, then we have
\be c_2(d^2 l+t)=c_2(\bar pd_1^2+t_1)=\prod_{p'\in\mathcal P'}\left(1-\frac{1}{p'^2}\right)\prod_{p\notin\mathcal P'}\left(1-\frac{2}{p^2}\right)=\sigma_1\cdot\prod_{p'\in\mathcal P'}\frac{p'^2-1}{p'^2-2}\nonumber\ee
Now, since $\gcd(t_1,\bar p d_1^2)=1$, we can use (\ref{intermediate-step-1/varphi}):
\be
\lim_{N\to\infty}\frac{1}{N}\sum_{l\leq N}c_2(d^2 l+t)=\lim_{N\to\infty}\frac{1}{N}\sum_{l\leq N} c_2(\bar p d_1^2 l+t_1)=\frac{1}{\varphi(\bar p d_1^2)}\lim_{N\to\infty}\frac{1}{N}\sum_{l\leq N}\sum_{\gcd(h_1,\bar p d_1^2)=1}c_2(\bar p d_1^2+h_1),\label{intermediate-step-1/varphi-2b}
\ee
and we can write
\bey
&&\frac{1}{N}\sum_{l\leq N}\sum_{\gcd(h_1,\bar p d_1^2)=1}c_2(\bar pd_1^2+h_1)=\sigma_1\sum_{\mathcal P'\subset\mathcal P\smallsetminus \mathcal P(d)}\prod_{p'\in\mathcal P'}\frac{p'^2-1}{p'^2-2}\nonumber\\
&&\hspace{.3cm}\sum_{b(p')\geq 2,\: p'\in\mathcal P'}\frac{\bar p d_1^2}{\prod_{p'\in\mathcal P'} p'^{b(p')}}\frac{\bar p d_1^2 N+h_1}{\bar p d_1^2 N}\mathbb E^{\mathcal P(d)\cup \mathcal P'}_{(\bar p d_1^2 N+h_1)/\left(\prod_{p'\in\mathcal P'}p'^{b(p')}\right)}(0).\nonumber\eey
Notice that $\mathcal P(d)=\mathcal P(d_1)\cup\{\bar p\}$. By taking the limit as $N\to\infty$ we obtain
\bey
&&\lim_{N\to\infty}\frac{1}{N}\sum_{l\leq N}\sum_{\gcd(h_1,\bar p d_1^2)=1}c_2(\bar p d_1^2l+h_1)=\nonumber\\
&&=\sigma_1\sum_{\mathcal P'\subset\mathcal P\smallsetminus \mathcal P(d)}\prod_{p'\in\mathcal P'}\frac{p'^2-1}{p'^2-2}\bar p d_1^2\frac{6}{\pi^2}\prod_{p\in\mathcal P(d)}\frac{p}{p+1}\prod_{p'\in\mathcal P'}\frac{1}{p'^2-1}=\nonumber\\
&&=\sigma_1\frac{6}{\pi^2}\frac{\bar p^2}{\bar p+1}\prod_{p\in\mathcal P(d_1)}\frac{p^3}{p+1}\sum_{\mathcal P'\subset\mathcal P\smallsetminus\mathcal P(d)}\prod_{p'\in\mathcal P'}\frac{1}{p'^2-2}=\nonumber\\
&&=\sigma_1\frac{6}{\pi^2}\frac{\bar p^2}{\bar p+1}\prod_{p\in\mathcal P(d_1)}\frac{p^3}{p+1}\prod_{p\in\mathcal P\smallsetminus \mathcal P(d)}\left(1+\frac{1}{p^2-2}\right)=\nonumber\\
&&=\frac{6}{\pi^2}\frac{\bar p^2}{\bar p+1}\frac{\bar p^2-2}{\bar p^2-1}\prod_{p\in\mathcal P(d_1)}\frac{p^3(p^2-2)}{(p+1)(p^2-1)}\prod_{p}\frac{p^2-1}{p^2}=\nonumber\\
&&=\left(\frac{6}{\pi^2}\right)^2\frac{\bar p^2(\bar p^2-2)}{(\bar p+1)(\bar p^2-1)}\prod_{p\in\mathcal P(d_1)}\frac{p^3(p^2-2)}{(p+1)(p^2-1)}.\label{intermediate-step-1/varphi-3b}
\eey
Let us use the fact that $\varphi(\bar p d_1^2)=\varphi(\bar p)\varphi(d_1^2)=(\bar p-1)\varphi(d_1^2)$ to obtain 
the formula 
\be\frac{1}{\varphi(\bar pd_1^2)}
=\frac{1}{\bar p-1}\prod_{p\in\mathcal P(d_1)}\frac{1}{p(p-1)}.\label{intermediate-step-1/varphi-4b}\ee
Now we can combine (\ref{intermediate-step-1/varphi-2b}), (\ref{intermediate-step-1/varphi-3b}), and (\ref{intermediate-step-1/varphi-4b}) to conclude the proof of the Lemma.
The case of a general square-free $g$ is treated analogously.
\end{proof}
\begin{proof}[Proof of Lemma \ref{lem-sum-c_2(d^2l+t)-general}]
The case when $g$ is square-free (i.e. $\mathcal P_2(g)=\varnothing$) is already included in Lemma \ref{lem-sum_c_2(d^l+t)-square-free}. Thus, it is enough to consider the case when $\mathcal P_2(g)\neq\varnothing$.
Let, for simplicity, $\gcd(t,d^2)=g=\bar p^2$ (i.e. $\omega(g)=1)$, the case of $\omega(g)>1$ being analogous. We have that $d^2=\bar p^2 d_1^2$ and $t=\bar p^2 t_1$, where $d_1$ is square-free and $\bar p\nmid d_1$. 
In particular $\gcd(t_1,d_1^2)=1$.
We can write
\be\frac{d^2}{\bar p^2}l+\frac{t}{\bar p^2}=d_1^2 l+t_1=\bar p^{a}\prod_{p\in\mathcal P'}p'^{b(p')}q_1,\nonumber\ee
where $a\geq 0$, $\mathcal P'\subset \mathcal P\smallsetminus\mathcal P(d)$, $|\mathcal P'|<\infty$, $q_1$ is square-free and $p\nmid q_1$ for every $p\in\mathcal P(d)\cup\mathcal P'$. The condition $l\leq N$ can be written as
\be q_1\in\mathcal Q^{\mathcal P(d)\cup \mathcal P'}_{(d_1^2 N+t_1)/\left(\prod_{p'\in\mathcal P'}p'^{b(p')}\right)}(0),\nonumber\ee
and 
\be c_2(d^2 l+t)=c_2(\bar p^2(d_1^2l+t_1))=\prod_{p\in\mathcal P'\cup\{\bar p\}}\left(1-\frac{1}{p^2}\right)\prod_{p\notin \mathcal P'\cup \{\bar p\}}\left(1-\frac{2}{p^2}\right)=\sigma_1\cdot\prod_{p\in\mathcal P'\cup\{\bar p\}}\frac{p^2-1}{p^2-2}.\nonumber\ee
Notice that by $\mathcal P'$ and $\{\bar p\}$ are disjoint by construction. Using (\ref{intermediate-step-1/varphi}) we see that
\bey
&&\lim_{N\to\infty}\frac{1}{N}\sum_{l\leq N}c_2(d^2l+t)=\lim_{N\to\infty}\sum_{l\leq N}c_2(\bar p^2(d_1^2l+t_1))=\nonumber\\
&&=\frac{1}{\varphi(d_1^2)}\lim_{N\to\infty}\frac{1}{N}\sum_{l\leq N}\sum_{\gcd(h_1,d_1^2)=1}c_2(\bar p^2(d_1^2l+h_1)).\label{lim-varphi(d_1^2)}
\eey
We have
\bey
&&\frac{1}{N}\sum_{l\leq N}\sum_{\gcd(h_1,d_1^2)=1}c_2(\bar p^2(d_1^2l+h_1))=\sigma_1\sum_{\mathcal P'\subset \mathcal P\smallsetminus\mathcal P(d)}\frac{\bar p^2-1}{\bar p^2-2}\prod_{p'\in\mathcal P'}\frac{p'^2-1}{p'^2-2}\nonumber\\
&&\hspace{.3cm}\sum_{a\geq0}\sum_{b(p')\geq2,\:p'\in\mathcal P'}\frac{d_1^2}{\bar p^a\prod_{p'\in\mathcal P'}p'^{b(p')}}\frac{d_1^2 N+h_1}{d_1^2 N}\mathbb E^{\mathcal P(d)\cup\mathcal P'}_{(d_1^2 N+h_1)/\left(\bar p^a\prod_{p'\in\mathcal P'}p'^{b(p')}\right)}(0),\nonumber
\eey
and by taking the limit as $N\to\infty$ we get
\bey
&&\lim_{N\to\infty}\frac{1}{N}\sum_{l\leq N}\sum_{\gcd(h_1,d_1^2)=1}c_2(\bar p^2(d_1^2 l+h_1))=\nonumber\\
&&=\sigma_1\frac{\bar p^2-1}{\bar p^2-2}\sum_{\mathcal P'\subset\mathcal P\smallsetminus \mathcal P(d)}\prod_{p'\in\mathcal P'}\frac{p'^2-1}{p'^2-2}\frac{d_1^2\bar p}{\bar p-1} \frac{6}{\pi^2}\prod_{p\in\mathcal P(d)}\frac{p}{p+1}\prod_{p'\in\mathcal P'}\frac{1}{p'^2-1}=\nonumber\\
&&=\sigma_1\frac{6}{\pi^2}\frac{\bar p+1}{\bar p(\bar p^2-2)}\prod_{p\in\mathcal P(d)}\frac{p^3}{p+1}\sum_{\mathcal P'\subset\mathcal P\smallsetminus\mathcal P(d)}\prod_{p'\in\mathcal P'}\frac{1}{p'^2-2}=\nonumber\\
&&=\sigma_1\frac{6}{\pi^2}\frac{\bar p+1}{\bar p(\bar p^2-2)}\prod_{p\in\mathcal P(d)}\frac{p^3}{p+1}\prod_{p\in \mathcal P\smallsetminus \mathcal P(d)}\left(1+\frac{1}{p^2-2}\right)=\nonumber\\
&&=\frac{6}{\pi^2}\frac{\bar p^2}{\bar p^2-1}\prod_{p\in\mathcal P(d_1)}\frac{p^3}{p+1}\frac{p^2-2}{p^2-1}\prod_p\frac{p^2-1}{p^2}=\left(\frac{6}{\pi^2}\right)^2\frac{\bar p^2}{\bar p^2-1}\prod_{p\in\mathcal P(d_1)}\frac{p^3(p^2-2)}{(p+1)(p^2-1)}.\label{lim-varphi(d_1^2)-1}
\eey
We use again the fact that 
\be\frac{1}{\varphi(d_1^2)}=
\prod_{p\in\mathcal P(d_1)}\frac{1}{p(p-1)}\label{lim-varphi(d_1^2)-2},\ee
and combining (\ref{lim-varphi(d_1^2)}), (\ref{lim-varphi(d_1^2)-1}) and (\ref{lim-varphi(d_1^2)-2}), we obtain the Lemma.
\end{proof}

\addcontentsline{toc}{section}{Bibliography}
\bibliographystyle{plain}
\bibliography{square-free-bibliography}
\end{document}